\newtheorem{thm}{Theorem}
\newtheorem{lem}[thm]{Lemma}
\theoremstyle{definition}
\newtheorem{dfn}[thm]{Definition}
\newtheorem{rmk}[thm]{Remark}
\numberwithin{thm}{section}
\numberwithin{equation}{section}
\newcommand*{\rom}[1]{\expandafter\@slowromancap\romannumeral #1@}
\newcommand{\N}{{\mathbb{N}}}
\newcommand{\R}{{\mathbb{R}}}
\title[Optimal regularity results in Sobolev-Lorentz spaces]{Optimal regularity results in Sobolev-Lorentz spaces   for linear elliptic   equations with $L^1$- or measure data}
\date{\today}
\author[H. Kim]{Hyunseok Kim}
\author[Y.-R. Lee]{Young-Ran Lee}
\author[J. Ok]{Jihoon Ok}
\address{Department of Mathematics, Institute for Mathematical and Data Sciences, Sogang University, 35 Baekbeom-ro, Mapo-gu, Seoul 04107, Republic of Korea.}
\email{kimh@sogang.ac.kr, younglee@sogang.ac.kr, jihoonok@sogang.ac.kr}
\begin{document}

\begin{abstract}
It has been  well known that if  $\Omega$ is a bounded $C^1$-domain in $\R^n,\ n \ge 2$, then   for every Radon measure $f$ on $\Omega$ with finite total variation,  there exists a unique weak solution   $u\in W_0^{1,1}(\Omega )$ of the Poisson equation   $-\Delta u=f$ in $\Omega$ satisfying  $\nabla u \in L^{n/(n-1),\infty}(\Omega;\R^n )$. In this paper, optimal regularity properties of the solution $u$ are established   in Sobolev-Lorentz spaces $L_{\alpha}^{p,q}(\Omega )$   of order $ \alpha$ less than but arbitrarily close to $2$. More precisely, for any $0 \le \alpha<1$, we show that $u\in L_{\alpha+1}^{p(\alpha),\infty}(\Omega )$, where  $p(\alpha )= n/(n-1+\alpha )$. Moreover, using  an embedding result for Sobolev-Lorentz spaces $L_{\alpha}^{p,q}(\Omega )$ into classical   Besov spaces $B_\alpha^{p,q}(\Omega )$, we deduce  that  $u\in B_{\alpha+1}^{p(\alpha),\infty}(\Omega )$.
Indeed, these regularity results are proved for solutions of the Dirichlet problems for more general linear elliptic equations with    nonhomogeneous boundary data.

On the other hand, it is    known that if $\Omega$ is of class $C^{1,1}$,  then for each $G\in L^1 (\Omega ;\R^n )$ there exists a unique very weak solution   $v\in L^{n/(n-1),\infty}  (\Omega )$ of   $-\Delta v= {\rm div}\, G$ in $\Omega$ satisfying the   boundary condition $v=0$ in some sense. We prove that   $v$ has the optimal regularity property,  that is,   $v\in L_{\alpha}^{p(\alpha),\infty}(\Omega )\cap B_{\alpha}^{p(\alpha),\infty}(\Omega )$ for every $0 \le \alpha < 1$.
This regularity result is  also  proved for more general equations  with nonhomogeneous boundary data.
\end{abstract}

\keywords{elliptic equations, regularity results, Sobolev-Lorentz spaces 
}
\subjclass[2020]{35J15, 35J25}

\maketitle

\section{Introduction}

In this paper, we study   the  following Dirichlet problems  for   linear elliptic equations on a bounded    domain $\Omega$ in   $\R^n ,\ n \ge 2  $:
\begin{equation}\label{eq:D-1-introd}
\left\{\begin{alignedat}{2}
- \Delta u + b \cdot \nabla u +cu  & =f & \quad & \text{in }\Omega,\\
  u & ={u_D} & \quad & \text{on }\partial\Omega
\end{alignedat}\right.
\end{equation}
and
\begin{equation}\label{eq:D-2-introd}
\left\{\begin{alignedat}{2}
- \Delta v -{\rm div}\, (v  b)+ cv  & ={\rm div}\, G & \quad & \text{in }\Omega,\\
  v & =v_D & \quad & \text{on }\partial\Omega ,
\end{alignedat}\right.
\end{equation}
where   $f$ is an integrable function or more generally a signed Radon measure on $\Omega$ with finite total variation, and $G = (G_1 , \ldots , G_n )$ is an integrable vector field on $\Omega$.

Elliptic equations with $L^1$- or measure data have been  studied extensively by many authors  \cite{BBGGPV,BBC,BO,BG,BG2,BS,DMOP,KO,Min07} for linear and nonlinear   problems. A quite well-known result
 (see, e.g., \cite{BBGGPV})
is that  if  the coefficients $b$ and $ c$ are bounded and the domain $\Omega$ is of class $C^{1}$, then  for   every Radon measure $f$ on $\Omega$ with finite total varation, there exists a unique weak solution $u \in W_0^{1,1}(\Omega )$ of (\ref{eq:D-1-introd}) with $u_D=0$ satisfying $\nabla u \in L^{n/(n-1), \infty }(\Omega; \R^n )$.
In addition, if $\Omega$ is of class $C^{1,1}$, then it can be shown by a duality argument (see, e.g.,  \cite{KO}) that
for every $G\in L^1 (\Omega ; \R^n )$, there exists a unique very weak solution $v \in L^{n/(n-1), \infty }(\Omega )$ of (\ref{eq:D-2-introd}) with  $v_D=0$ satisfying the boundary condition $v=0$ in some very weak sense.
Here  $L^{p,q}(\Omega )$ denotes  the  Lorentz space on the domain $\Omega$, with $L^{p,\infty}(\Omega )$ often referred to as the weak $L^p$-space or the Marcinkiewicz space on $\Omega$.

The purpose of  this paper is to establish optimal     regularity results for the weak solution $u$ and the very weak solution $v$ of the problems  (\ref{eq:D-1-introd}) and (\ref{eq:D-2-introd}), respectively,  with $L^1$- or measure data, in the framework of Sobolev-Lorentz spaces as well as classical  Besov spaces.

\subsection{Optimal regularity for the whole space case}\label{sec1.1}

To explore optimal  regularity of solutions of (\ref{eq:D-1-introd}) and (\ref{eq:D-2-introd}) with $L^1$- or measure data, we    consider
the following simple elliptic   equation in the whole space $\R^n , n \ge 2$:
\begin{equation}\label{equation in Rn}
  -\Delta u +u  = f \quad\mbox{in}\,\, \R^n .
\end{equation}
Let $u$  be a bounded smooth  solution of (\ref{equation in Rn}) with $f \in C_c^\infty (\R^n )$.
Using the Fourier transform $\mathscr{F} = \widehat{\cdot}$, we obtain
\[
\widehat{u}(\xi ) =  \frac{\widehat{f}(\xi)}{1+ 4 \pi^2  |\xi|^{2}} .
\]
For $s \in \R$, let $ J^s = (I-\Delta )^{s/2}$ be defined via the Fourier transform  by
\[
\widehat{J^s   \varphi}(\xi )= \left(1+ 4 \pi^2  |\xi|^2  \right)^{s/2} \widehat{\varphi} (\xi ).
\]
Then for  $0 \le  \alpha < 1$, we have
\[
\widehat{J^{\alpha +1} u}(\xi ) = \left(1+ 4 \pi^2  |\xi|^2  \right)^{-(1-\alpha)/2}\widehat{f}(\xi)
\]
and
\[
J^{\alpha+1}  u (x)  = J^{-(1-\alpha)} f (x)  = \left( K_{1-\alpha} * f \right) (x) ,
\]
where    $K_{s} (x) = \mathscr{F}^{-1}  \left\{ (1+ 4 \pi^2  |\xi|^2  )^{-s/2}  \right\}(x) $. It is well-known (see, e.g., \cite[Proposition 6.1.5]{Gra2}) that if $0< s< n$,
then
\[
 0<   K_{s} (x) \le  \frac{C(n, s )}{|x|^{n-s}} \quad \mbox{for all}\,\, x \in \R^n \setminus \{ 0\}.
\]
Note   that
$$
K_{1-\alpha} \in L^{\frac{n}{n-1+\alpha},\infty}(\R^n ).
$$
 Hence by Young's convolution inequality (see, e.g., \cite[Theorem 1.2.13]{Gra}), we have
\begin{equation}\label{weak est on Rn}
\left\| (I-\Delta )^{(\alpha+1)/2} \,  u \right\|_{L^{p(\alpha) ,\infty}(\R^n )} \le C(n,\alpha ) \|f\|_{L^1 (\R^n )}
\end{equation}
for every $0 \le  \alpha < 1$, where
\begin{equation}\label{def-pa}
p(\alpha) = \frac{n}{n-1+\alpha}.
\end{equation}
Similarly, if $v$ is a bounded  smooth   solution of
\begin{equation}\label{equation-div in Rn}
-\Delta v + v  = {\rm div}\, G\quad\mbox{in}\,\,\R^n ,
\end{equation}
where $G = (g_1 , \ldots , g_n )$ is a   vector field on $\R^n$, then
\[
\left\| (I-\Delta )^{\alpha/2} v \right\|_{L^{p(\alpha) ,\infty}(\R^n )} \le C(n,\alpha ) \|G\|_{L^1 (\R^n ; \R^n)}
\]
for every $0 \le  \alpha < 1$.

For $-\infty<\alpha<\infty$,   $1< p<\infty$, and $1 \le q \le \infty$, let  $L_\alpha^{p,q}(\R^n )$ be the Banach space of all  tempered  distributions  $\varphi$ on $\R^n $ such that
\[
\|\varphi\|_{L_\alpha^{p,q}(\R^n )} = \|(I-\Delta )^{\alpha/2} \varphi \|_{L^{p,q} (\R^n )}
\]
is finite.  Then by a standard density argument, it   follows from the a priori estimate (\ref{weak est on Rn}) that for each $f \in L^1 (\R^n )$, the problem (\ref{equation in Rn}) has a   solution $u$ satisfying
\begin{equation}\label{weak est on Rn-2}
u\in L_{\alpha+1}^{p(\alpha) ,\infty}(\R^n ) \quad\mbox{and}\quad \left\|   u \right\|_{L_{\alpha+1}^{p(\alpha) ,\infty}(\R^n )} \le C(n,\alpha ) \|f\|_{L^1 (\R^n )}
\end{equation}
for every $0 \le  \alpha < 1$.  It is easily shown that such a solution $u$ is unique. Further regularity of the solution $u$ can be deduced from an embedding result for   Sobolev-Lorentz spaces $L_\alpha^{p,q}(\R^n )$ into classical Besov spaces $B_\alpha^{p,q}(\R^n )$.
Given   $0 \le   \alpha <1$, we choose any $\beta$ such that
 $  \alpha   <  \beta  < 1$. Then since $\beta -\alpha =n/p(\beta) -n/p(\alpha)  $, it follows from the embedding theorem \cite[Theorem 1.2]{seeger}  due to  Seeger and Trebels  (see Lemma \ref{embedding of L into B on domain} below) that $L_{\beta+1}^{p(\beta),\infty}(\R^n )$ is continuously embedded into $B_{\alpha+1}^{p(\alpha) ,\infty}(\R^n )$. Hence the solution $u$ of (\ref{equation in Rn}) also satisfies
\begin{equation}\label{weak est on Rn-3}
u\in B_{\alpha+1}^{p(\alpha) ,\infty}(\R^n ) \quad\mbox{and}\quad \left\|   u \right\|_{B_{\alpha+1}^{p(\alpha) ,\infty}(\R^n )} \le C(n,\alpha ) \|f\|_{L^1 (\R^n )}
\end{equation}
for every  $0 \le  \alpha < 1$. By the same argument, it can be  shown  that
for every  $G \in L^1 (\R^n ; \R^n )$, the problem (\ref{equation-div in Rn}) has a  unique  solution $v$ satisfying
\[
v\in L_{\alpha}^{p(\alpha) ,\infty}(\R^n )\cap B_{\alpha}^{p(\alpha) ,\infty}(\R^n )
\]
and
\begin{equation}\label{weak est on Rn-div-2}
   \left\|  v \right\|_{L_{\alpha}^{p(\alpha) ,\infty}(\R^n )\cap B_{\alpha}^{p(\alpha) ,\infty}(\R^n ) } \le C(n,\alpha ) \|G\|_{L^1 (\R^n ;\R^n )}
\end{equation}
for every $0 \le  \alpha < 1$.

\subsection{Main results}

The main purpose of the paper is to establish  that   solutions of (\ref{eq:D-1-introd}) and  (\ref{eq:D-2-introd}) satisfy optimal   regularity  estimates analogous to (\ref{weak est on Rn-2}), (\ref{weak est on Rn-3}),   and (\ref{weak est on Rn-div-2}).

The data $f$ and  $G$  will be assumed to be only integrable on $\Omega$. We also  allow for even more general $f$ in the Banach space  $\mathcal{M}  (\Omega )$ of all signed  Radon measures   on $\Omega$ with finite total variation. It is well known that $L^1 (\Omega ) \hookrightarrow \mathcal{M}  (\Omega )$, that is,   $L^1 (\Omega )$ is continuously    embedded into $ \mathcal{M}  (\Omega )$.
Furthermore, the boundary data $v_D$ for the problem (\ref{eq:D-2-introd}) can be a signed Radon measure on $\partial \Omega$.

To define weak   solutions of (\ref{eq:D-1-introd})   with $L^1$-  or measure data $f$, we need to introduce Sobolev-Lorentz spaces and Besov spaces on the domain $\Omega$.
For $\alpha>0$,  $1 < p< \infty$, and $1 \le q \le \infty$, we denote by $L_\alpha^{p,q}(\Omega ) $ the space of the restrictions of all functions in $L_\alpha^{p,q}(\R^n)$ to $\Omega$:
$$
L_\alpha^{p,q}(\Omega ) = \left\{ f|_{\Omega}\,:\, f \in L_\alpha^{p,q}(\R^n ) \right\},
$$
which is a Banach space equipped with the usual quotient norm.
For $k \in \N$,   let $W^{k,p,q}(\Omega )$ be the Banach  space of   all $f \in L^{p,q}(\Omega )$ such that $D^\gamma f \in L^{p,q}(\Omega  )$ for all multi-indices $\gamma$ with $|\gamma| \le k$.
Then it can be shown (see Lemma \ref{interp for SL on domains}) that  $L_k^{p,q}(\Omega ) =W^{k,p,q}(\Omega ) $ for $k \in \N$.
If $\alpha >1/p$, each function $f$ in  $L_\alpha^{p,q}(\Omega )$ has a well-defined trace ${\rm Tr} \,f$ in $L^{p,q}(\partial\Omega )$. For $\alpha>0$, $1<p<\infty$, and $1\le q \le \infty$, let  $\mathcal B^{p,q}_{\alpha}(\partial \Omega )$ denote the range space of the trace operator ${\rm Tr}:  L_{\alpha+1/p}^{p,q} (\Omega )  \to L^{p,q}(\partial \Omega)$.
It is well known  (see, e.g.,  \cite[Theorem 3.1]{JK})  that if $1/p< \alpha < 1/p+1$, then $\mathcal B^{p,p}_{\alpha}(\partial \Omega )$ is the boundary Besov space $B_\alpha^{p}(\partial\Omega )$.
 The space of all $f \in L_\alpha^{p,q}(\Omega )$ with ${\rm Tr} \,f =0$ is denoted by $L_{\alpha,0}^{p,q}(\Omega )$.
It can be shown (see Lemma \ref{R-tilde equal zero trac}) that if $1/p< \alpha < 1+1/p$ and $q<\infty$, then $C_c^\infty (\Omega )$ is dense in $L_{\alpha,0}^{p,q}(\Omega )$. For $1<p<\infty$, $1/p< \alpha <1+1/p$, and $1< q \le \infty$, let $L_{-\alpha}^{p,q}(\Omega )$ denote the dual space of $L_{\alpha,0}^{p',q'}(\Omega )$, where $r'=r/(r-1)$ is the H\"{o}lder conjugate to $r$.  Then it will be proved   in  Lemma \ref{embedding of M}  that
\[
\mathcal{M}  (\Omega ) \hookrightarrow L_{-1}^{n/(n-1),\infty}(\Omega ).
\]
Finally, for $\alpha>0$, we define $B_\alpha^{p,q}(\Omega ) = \left\{ f|_{\Omega}\,:\, f \in B_\alpha^{p,q}(\R^n ) \right\}$.

\medskip

For  measure data or more generally $L_{-1}^{n/(n-1),\infty}$-data $f$, weak solutions of (\ref{eq:D-1-introd}) are defined as follows, which is motivated by \cite{KO}.

\begin{dfn}\label{def-solutions-0}
   Suppose that  $f \in L_{-1}^{n/(n-1),\infty}(\Omega )$ and $u_D \in \mathcal B^{n/(n-1),\infty}_{1/n}(\partial \Omega )$.
Then by a   weak solution of (\ref{eq:D-1-introd}), we mean a function $u\in L_{1}^{n/(n-1),\infty}(\Omega )$   such that
\[
 b \cdot \nabla u, \, cu \in L_{loc}^1 (\Omega ) , \quad {\rm Tr}\, u = u_D \,\,\,\mbox{on}\,\,\partial\Omega ,
\]
and
\[
\int_\Omega \left[ \nabla u \cdot \nabla \phi + (  \phi\, b)  \cdot \nabla u +c u\phi \right]   dx =  \langle f ,  \phi \rangle
\quad\mbox{for all}\,\, \phi \in C_c^{\infty} (\Omega ).
\]
\end{dfn}

It should be noticed in Definition \ref{def-solutions-0}  that if $b$ and $c$ satisfy
$$
b \in L^{n,1}(\Omega ; \R^n ) \quad\mbox{and}\quad
 c \in
\begin{cases}
  L^{n/2,1}(\Omega )    & \mbox{if}\,\,n \ge 3 \\
 \bigcup_{s>1} L^s (\Omega ) & \mbox{if}\,\, n=2 ,
 \end{cases}
$$
then by Sobolev's and  H\"{o}lder's   inequalities (see  (\ref{Holder-L}) and Lemma \ref{properties of SL on domains}  below),
$$
b \cdot \nabla u, \, cu \in L^1 (\Omega ) \quad\mbox{for all}\,\, u\in L_{1}^{n/(n-1),\infty}(\Omega ) .
$$

\medskip

We are now ready to state  the first  main result  in the paper.

\begin{thm}\label{Poisson-L1 data}
Let $\Omega$ be a bounded $C^1$-domain in $\R^n , n \ge 2 $. Suppose that $b \in L^{n,1}(\Omega ; \R^n )$,    $ c \in L^{n/2,1}(\Omega )\cap L^s (\Omega )$ for some $s>1$, and $c \ge 0$ in $\Omega$.
Then for every   $f \in L_{-1}^{n/(n-1),\infty}(\Omega )$ and $u_D \in \mathcal B^{n/(n-1),\infty}_{1/n}(\partial \Omega )$, there exists a unique weak  solution  $u$  of (\ref{eq:D-1-introd}), which satisfies
\[
   \|   u \|_{L_{1}^{n/(n-1),\infty}(\Omega )}  \le C \Big( \|f \|_{L_{-1}^{n/(n-1),\infty}(\Omega )} + \|u_D \|_{\mathcal B^{n/(n-1),\infty}_{1/n}(\partial \Omega )} \Big)
\]
for some $C=C(n,  s, \Omega ,b,c )$. In addition, if $ f\in \mathcal{M}  (\Omega )$ and $u_D \in \mathcal B^{p(\alpha ),\infty}_{\alpha +1 -1/p (\alpha )}(\partial \Omega )$ for some  $0 <  \alpha <1$, then
\[
u \in L_{\alpha +1}^{p(\alpha) ,\infty}(\Omega  )  \quad\mbox{and}
\quad
 \|   u \|_{L_{\alpha +1}^{p(\alpha) ,\infty}(\Omega ) } \le C \Big( \|f \|_{\mathcal{M}  (\Omega )}+ \|u_D \|_{\mathcal B^{p(\alpha ),\infty}_{\alpha +1 -1/p (\alpha )}(\partial \Omega )} \Big),
\]
where   $C=C(n,\alpha ,    s, \Omega ,b,c )$. Here, $p(\alpha)=n/(n-1+\alpha)$ as defined in (\ref{def-pa}).
\end{thm}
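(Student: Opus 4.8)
The plan is to reduce to a zero–boundary–value Dirichlet problem, prove an a priori estimate together with uniqueness and existence via approximation, and then bootstrap to the improved regularity. The model object throughout is the Dirichlet problem for $-\Delta$ on the bounded $C^1$–domain $\Omega$, whose mapping properties in the Bessel–potential–Lorentz scale mirror the whole–space estimates recalled in Section~\ref{sec1.1}; I would record these as a preliminary lemma. \emph{Reduction.} Since $\mathcal{B}^{n/(n-1),\infty}_{1/n}(\partial\Omega)$, resp. $\mathcal{B}^{p(\alpha),\infty}_{\alpha+1-1/p(\alpha)}(\partial\Omega)$, is by definition the trace space of $L_{1}^{n/(n-1),\infty}(\Omega)$, resp. $L_{\alpha+1}^{p(\alpha),\infty}(\Omega)$, I pick an extension $U$ of $u_D$ in that space with controlled norm (the two hypotheses on $u_D$ being compatible because $L_{\alpha+1}^{p(\alpha),\infty}(\Omega)\hookrightarrow L_{1}^{n/(n-1),\infty}(\Omega)$) and set $w=u-U$. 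Then $w$ has zero trace and solves the same type of equation with right–hand side $\tilde f=f+\Delta U-b\cdot\nabla U-cU$. Here $\Delta U\in L_{-1}^{n/(n-1),\infty}(\Omega)$, resp. $L_{\alpha-1}^{p(\alpha),\infty}(\Omega)$, while Sobolev embedding (Lemma~\ref{properties of SL on domains}) gives $\nabla U\in L^{n/(n-1),\infty}(\Omega)$ and $U\in L^{n/(n-2),\infty}(\Omega)$, so H\"older's inequality in Lorentz spaces puts $b\cdot\nabla U,cU\in L^1(\Omega)\hookrightarrow\mathcal{M}(\Omega)\hookrightarrow L_{-1}^{n/(n-1),\infty}(\Omega)$ (for $n=2$ the term $cU$ uses the extra hypothesis $c\in L^s$). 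Hence it suffices to treat the zero–trace problem for $w$.

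\textbf{A priori estimate, uniqueness, existence.} Rewriting the equation as $-\Delta w=\tilde f-b\cdot\nabla w-cw$ and noting that once $w\in L_{1}^{n/(n-1),\infty}(\Omega)$ one has $\nabla w\in L^{n/(n-1),\infty}(\Omega)$, $w\in L^{n/(n-2),\infty}(\Omega)$, so $b\cdot\nabla w,cw\in L^1(\Omega)\hookrightarrow L_{-1}^{n/(n-1),\infty}(\Omega)$ by the same H\"older argument, the Dirichlet–Laplacian regularity at smoothness order $1$ and exponents $(n/(n-1),\infty)$ gives $w\in L_{1}^{n/(n-1),\infty}(\Omega)$ with a bound of the form $\|w\|_{L_{1}^{n/(n-1),\infty}(\Omega)}\le C\|\tilde f\|_{L_{-1}^{n/(n-1),\infty}(\Omega)}+C(\|b\|_{L^{n,1}}+\|c\|_{L^{n/2,1}\cap L^s})\|w\|_{L_{1}^{n/(n-1),\infty}(\Omega)}$, the last term carrying no smallness. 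Splitting $b=b_0+b_1$, $c=c_0+c_1$ with $b_0,c_0$ bounded and $b_1,c_1$ small in $L^{n,1}(\Omega)$, $L^{n/2,1}(\Omega)\cap L^s(\Omega)$ respectively, absorbing the contributions of $b_1,c_1$ and treating those of $b_0,c_0$ as a compact perturbation, one reaches $\|w\|_{L_{1}^{n/(n-1),\infty}(\Omega)}\le C\|\tilde f\|_{L_{-1}^{n/(n-1),\infty}(\Omega)}+C\|w\|_{L^1(\Omega)}$. The auxiliary term is removed by the standard contradiction argument, which needs uniqueness of the homogeneous problem: a zero–trace $w\in L_{1}^{n/(n-1),\infty}(\Omega)$ with $-\Delta w+b\cdot\nabla w+cw=0$ lies in $W_0^{1,1}(\Omega)$, and since $c\ge0$ the known $W_0^{1,1}$–uniqueness forces $w\equiv0$. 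Existence follows by approximating $f$ (and, if convenient, $b,c$) by smooth data with uniformly bounded norms, solving the regularized problems classically, extracting a weakly–$*$ convergent subsequence from the uniform a priori bound, passing to the limit in the weak formulation, and using uniqueness to identify the limit. This proves the first assertion.

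\textbf{Improved regularity.} Now let $f\in\mathcal{M}(\Omega)$. By the first part $u\in L_{1}^{n/(n-1),\infty}(\Omega)$, hence so is $w=u-U$, with norm controlled by $\|f\|_{\mathcal{M}(\Omega)}+\|u_D\|_{\mathcal{B}^{p(\alpha),\infty}_{\alpha+1-1/p(\alpha)}(\partial\Omega)}$, and therefore $b\cdot\nabla w,cw\in L^1(\Omega)\hookrightarrow\mathcal{M}(\Omega)$. Thus $w$ solves the pure Poisson problem $-\Delta w=\mu+g$ in $\Omega$, $w=0$ on $\partial\Omega$, with $\mu:=f-b\cdot\nabla U-cU-b\cdot\nabla w-cw\in\mathcal{M}(\Omega)$ and $g:=\Delta U\in L_{\alpha-1}^{p(\alpha),\infty}(\Omega)$, both bounded by the quantity above. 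Extend $\mu$ by zero to $\bar\mu\in\mathcal{M}(\R^n)$ and let $W$ solve $-\Delta W+W=\bar\mu$ in $\R^n$; the measure–data version of the computation in Section~\ref{sec1.1}, i.e.\ Young's convolution inequality applied to $K_{1-\alpha}\in L^{p(\alpha),\infty}(\R^n)$ against a finite measure, gives $W\in L_{\alpha+1}^{p(\alpha),\infty}(\R^n)$ with $\|W\|_{L_{\alpha+1}^{p(\alpha),\infty}(\R^n)}\le C\|\mu\|_{\mathcal{M}(\Omega)}$. On $\Omega$ one has $-\Delta(W|_\Omega)=\mu-W|_\Omega$, so $\zeta:=w-W|_\Omega$ satisfies $-\Delta\zeta=g+W|_\Omega$ in $\Omega$, where $g+W|_\Omega\in L_{\alpha-1}^{p(\alpha),\infty}(\Omega)=L_{(\alpha+1)-2}^{p(\alpha),\infty}(\Omega)$, and ${\rm Tr}\,\zeta=-{\rm Tr}(W|_\Omega)\in\mathcal{B}^{p(\alpha),\infty}_{(\alpha+1)-1/p(\alpha)}(\partial\Omega)$. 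Applying the $C^1$–domain Dirichlet–Laplacian regularity at smoothness order $s=\alpha+1\in(1,2)$ and exponents $(p(\alpha),\infty)$ to $\zeta$ (which is already known to lie in $L_{1}^{n/(n-1),\infty}(\Omega)$) yields $\zeta\in L_{\alpha+1}^{p(\alpha),\infty}(\Omega)$; hence $w=\zeta+W|_\Omega$ and $u=w+U$ lie in $L_{\alpha+1}^{p(\alpha),\infty}(\Omega)$ with the asserted bound.

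\textbf{Main obstacle.} The decisive input is the sharp solvability/regularity of the Dirichlet problem for $-\Delta$ on a merely $C^1$---rather than $C^{1,1}$---domain in the Bessel–potential–Lorentz scale: that the solution with data in $L_{s-2}^{p,q}(\Omega)$ and boundary datum in $\mathcal{B}^{p,q}_{s-1/p}(\partial\Omega)$ belongs to $L_{s}^{p,q}(\Omega)$ for every $s<2$---which is precisely why the conclusion stops short of $\alpha=1$---with the weak index $q=\infty$ surviving the boundary correction; establishing this (in the spirit of boundary value problems on $C^1$ domains) is the heart of the matter. A secondary difficulty is the compactness–uniqueness argument in the second stage, which must be carried out in the non-reflexive, non-separable spaces $L^{p,\infty}(\Omega)$.
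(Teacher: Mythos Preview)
Your reduction step and overall strategy are correct, and you correctly identify the decisive input: sharp solvability of the Dirichlet problem for $-\Delta$ on a $C^1$ domain in the Bessel--potential--Lorentz scale (this is Theorem~\ref{Poisson-FSL spaces}(i) in the paper). The paper's execution, however, differs from yours in two ways that streamline the argument considerably.

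First, for existence and uniqueness with zero boundary data, the paper does not redo the a priori estimate plus splitting plus compactness--contradiction argument you sketch; instead it quotes directly the result of \cite{KO} (recorded as Theorem~\ref{existence of solutions}(i)), which handles the full operator $-\Delta + b\cdot\nabla + c$ with divergence-form data $f = {\rm div}\, F$, $F \in L^{n/(n-1),\infty}(\Omega;\R^n)$. To feed arbitrary $f_1 \in L_{-1}^{n/(n-1),\infty}(\Omega)$ into that result, the paper first represents $f_1 = {\rm div}\, F_1$ by solving the Poisson equation (Theorem~\ref{Poisson-measure data}). This completely sidesteps the delicate compactness argument in the non-reflexive spaces $L^{p,\infty}$ that you flag as a secondary difficulty.

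Second, and more substantively, your whole-space detour for the improved regularity is unnecessary. The shortcut you are missing is Lemma~\ref{embedding of M}: the embedding $\mathcal{M}(\Omega) \hookrightarrow L_{\alpha-1}^{p(\alpha),\infty}(\Omega)$ holds directly on the domain, because $\widetilde{L}_{1-\alpha}^{p(\alpha)',1}(\Omega) \hookrightarrow C_0(\Omega)$. So once $u_2 \in L_{1,0}^{n/(n-1),\infty}(\Omega)$ is known from the first part, one simply writes $-\Delta u_2 = f_2$ with $f_2 := f_1 - b\cdot\nabla u_2 - cu_2 \in L_{\alpha-1}^{p(\alpha),\infty}(\Omega)$ (all lower-order terms land in $L^1 \hookrightarrow \mathcal{M} \hookrightarrow L_{\alpha-1}^{p(\alpha),\infty}$, and $f \in \mathcal{M}$ likewise), and applies Theorem~\ref{Poisson-FSL spaces}(i) once at smoothness order $\alpha+1$ and exponent $p(\alpha)$, identifying $u_2$ with the solution produced there via uniqueness at the base level $L_{1,0}^{n/(n-1),\infty}(\Omega)$. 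No extension of the measure to $\R^n$, no auxiliary Bessel-potential solution $W$, no boundary correction $\zeta$ is needed. Your route would work, but it duplicates effort: you invoke the very same $C^1$-domain regularity result for $\zeta$ that could have been applied to $w$ directly.
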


\begin{rmk}\label{rmk-main1} Let $u$  be the weak solution of (\ref{eq:D-1-introd}) with $ f\in \mathcal{M}  (\Omega )$ and $u_D \in \mathcal B^{p(\alpha ),\infty}_{\alpha +1 -1/p (\alpha )}(\partial \Omega )$ for some  $0 <  \alpha <1$. Then it follows from the embedding result (\ref{embedding of L into L and B})  below    that
\[
u \in   B_{\beta +1}^{p(\beta) ,\infty}(\Omega )
 \quad\mbox{and}\quad
 \|   u \|_{  B_{\beta +1}^{p(\beta) ,\infty}(\Omega  )} \le C \|   u \|_{L_{\alpha +1}^{p(\alpha) ,\infty}(\Omega ) }
\]
for all   $0\le \beta < \alpha$,   where   $C=C(n , \Omega ,\alpha , \beta )$.

Suppose in addition that $u_D \in \mathcal B^{p,p}_{2 -1/p}(\partial \Omega )  $   for some $p>1$.
If  $0 \le  \beta <1$, then  by    Lemma \ref{properties of SL on domains} (i) and (ii), we have
$$
L_2^{p,p}(\Omega ) \hookrightarrow L_{\beta +1}^{p(\beta) ,\infty}(\Omega  ) \quad\mbox{and so}\quad u_D \in \mathcal B^{p,p}_{2 -1/p}(\partial \Omega ) \hookrightarrow \mathcal B^{p(\beta ),\infty}_{\beta +1 -1/p (\beta )}(\partial \Omega ).
$$
Hence it follows from Theorem   \ref{Poisson-L1 data} and (\ref{embedding of L into L and B}) that
\[
u \in L_{\beta +1}^{p(\beta) ,\infty}(\Omega  ) \cap B_{\beta +1}^{p(\beta) ,\infty}(\Omega )
\quad\mbox{for every}\,\, 0 \le \beta < 1.
\]
\end{rmk}

For the  special case when $f \in L^1 (\Omega )$ and $u_D =0$, the existence and the uniqueness assertions in Theorem   \ref{Poisson-L1 data} were already proved    in \cite[Corollary 1.1]{KO}. However the optimal regularity results in Theorem   \ref{Poisson-L1 data} and Remark \ref{rmk-main1} are  completely  new even for   the Poisson equation, which corresponds to the simplest case when $b =0$ and   $c=0$.

\vspace{0.3cm}

The second main result is concerned with optimal regularity properties of  very weak solutions  of (\ref{eq:D-2-introd}) with $L^1$- or measure data.

We begin with   a trace result for very weak solutions of (\ref{eq:D-2-introd}). For $\alpha>0$, $1<p<\infty$, and $1 < q \le \infty$, we denote by  $\mathcal{B}_{-\alpha}^{p,q}(\partial\Omega )$    the dual space of $\mathcal{B}_{\alpha}^{p' ,q'}(\partial\Omega )$.
Indeed, it easily follows from Lemma~\ref{interp for SL on domains}    that if $C^\infty (\partial\Omega ) = \left\{  u|_{\partial\Omega}\,:\, u \in C_c^\infty (\R^n ) \right\}$, then $C^\infty (\partial\Omega )$ is dense in  $\mathcal B^{p',q'}_{\alpha}(\partial \Omega )$ for   $ 1< q \le \infty$.
Hence   $\mathcal{B}_{-\alpha}^{p,q}(\partial\Omega )$ is naturally embedded into the space of distributions on $\partial\Omega$.

Let $\nu$ be  the outward unit normal vector on $\partial\Omega$. The normal derivative of a function $\psi$ on $\partial\Omega$ is  denoted by $ \partial_\nu \psi$, that is,  $\partial_\nu \psi =\nabla \psi \cdot \nu $.

\begin{thm}\label{trace results-VWS} Let  $\Omega$ be a  bounded $C^{1,1}$-domain in $\R^n , n \ge 2$.  Suppose that $b \in L^{n,1}(\Omega ; \R^n )$ and    $ c \in L^{n,1}(\Omega )$.   Suppose in addition  that
\[
  v \in L^{n/(n-1),\infty}(\Omega ), \quad  G \in L^1 (\Omega ; \R^n ),
\]
and
\[
\int_\Omega v \left(- \Delta \psi  + b \cdot \nabla \psi +c\psi \right)  dx = - \int_\Omega G \cdot \nabla \psi \, d x \quad\mbox{for all}\,\, \psi \in C_c^\infty (\Omega ).
\]
Then there exists a unique  $\gamma_0 v \in \mathcal{B}_{-1+1/n}^{n/(n-1),\infty}(\partial\Omega )$ such that
\begin{equation}\label{Green identity}
\left\langle \gamma_0 v ,  \partial_\nu \psi   \right\rangle =\int_\Omega v \left(- \Delta \psi  + b \cdot \nabla \psi +c\psi \right)  dx + \int_\Omega G \cdot \nabla \psi \, d x
\end{equation}
for all $\psi \in C^{1,1}(\overline{\Omega})$ with $\psi =0$ on $\partial\Omega$. Moreover,
\[
\|\gamma_0 v \|_{\mathcal{B}_{-1+1/n}^{n/(n-1),\infty}(\partial\Omega )} \le C \left( \|v\|_{L^{n/(n-1),\infty}(\Omega )}+ \|G\|_{ L^1 (\Omega ; \R^n )} \right)
\]
for some $C=C(n,\Omega )$. In addition, if $v \in L_1^{p}(\Omega )$ for some $p>1$, then  $\gamma_0 v = {\rm Tr\, }v$.
\end{thm}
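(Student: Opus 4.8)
The plan is to run a duality argument. Since $\mathcal B_{-1+1/n}^{n/(n-1),\infty}(\partial\Omega)$ is by definition the dual of $\mathcal B_{1-1/n}^{n,1}(\partial\Omega)$ (note that $(n/(n-1))'=n$ and $\infty'=1$), it suffices to produce a bounded linear functional on $\mathcal B_{1-1/n}^{n,1}(\partial\Omega)$ that reproduces the right-hand side of (\ref{Green identity}). The structural input I would rely on is a trace fact for Sobolev--Lorentz spaces on $C^{1,1}$-domains: the normal derivative $\partial_\nu$ maps $L_{2,0}^{n,1}(\Omega):=\{\psi\in L_2^{n,1}(\Omega):{\rm Tr}\,\psi=0\}$ boundedly \emph{onto} $\mathcal B_{1-1/n}^{n,1}(\partial\Omega)$ and admits a bounded linear right inverse $\mathcal E$; equivalently, the second-order normal-trace space of $L_2^{n,1}$ coincides, with equivalent norm, with the value-trace space of $L_1^{n,1}$ that defines $\mathcal B_{1-1/n}^{n,1}(\partial\Omega)$. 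I expect this to be the main obstacle: over a half-space it follows from a Peetre-type extension $g\mapsto-x_n\mathcal F_g$, where $\mathcal F_g$ is the standard extension realizing $g$ as the value trace of an $L_1^{n,1}$-function, together with the gain $\|x_nD_x^2\mathcal F_g\|_{L^{n,1}}\lesssim\|g\|$, and the passage to $\Omega$ is by localization and $C^{1,1}$-flattening of $\partial\Omega$ --- precisely where class $C^{1,1}$ is used.

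Granting this, I would first set up the functional. For $\psi\in L_{2,0}^{n,1}(\Omega)$, the Sobolev--Lorentz embedding $L_1^{n,1}(\Omega)\hookrightarrow L^\infty(\Omega)$ (applied to $\psi$ and to each $\partial_i\psi$) together with $L^\infty(\Omega)\hookrightarrow L^{n,1}(\Omega)$ and H\"older's inequality for Lorentz spaces give $\nabla\psi,\psi\in L^\infty(\Omega)$ and $-\Delta\psi+b\cdot\nabla\psi+c\psi\in L^{n,1}(\Omega)$ with $\|-\Delta\psi+b\cdot\nabla\psi+c\psi\|_{L^{n,1}}+\|\nabla\psi\|_{L^\infty}\le C\|\psi\|_{L_2^{n,1}}$, where $C=C(n,\Omega,b,c)$. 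Hence $\Lambda(\psi):=\int_\Omega v\,(-\Delta\psi+b\cdot\nabla\psi+c\psi)\,dx+\int_\Omega G\cdot\nabla\psi\,dx$ is a well-defined linear functional on $L_{2,0}^{n,1}(\Omega)$, and by the Lorentz duality $(L^{n,1})^*=L^{n/(n-1),\infty}$ and $G\cdot\nabla\psi\in L^1(\Omega)$ it satisfies $|\Lambda(\psi)|\le C\big(\|v\|_{L^{n/(n-1),\infty}(\Omega)}+\|G\|_{L^1(\Omega;\R^n)}\big)\|\psi\|_{L_2^{n,1}(\Omega)}$.

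Next I would show that $\Lambda$ factors through $\partial_\nu$. If $\psi\in L_{2,0}^{n,1}(\Omega)$ has $\partial_\nu\psi=0$, then $\psi$ lies in the $L_2^{n,1}(\Omega)$-closure of $C_c^\infty(\Omega)$; taking $\psi_j\in C_c^\infty(\Omega)$ with $\psi_j\to\psi$ in $L_2^{n,1}(\Omega)$, the hypothesis gives $\Lambda(\psi_j)=0$ for all $j$, and one may pass to the limit since $-\Delta\psi_j\to-\Delta\psi$ in $L^{n,1}$ (paired against $v\in(L^{n,1})^*$), $\nabla\psi_j\to\nabla\psi$ and $\psi_j\to\psi$ in $L^\infty$ (so $b\cdot\nabla\psi_j$ and $c\psi_j$ converge in $L^{n,1}$), and $G\cdot\nabla\psi_j\to G\cdot\nabla\psi$ in $L^1$; hence $\Lambda(\psi)=0$. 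Therefore $\Lambda=T\circ\partial_\nu$ for a unique linear functional $T$ on $\mathcal B_{1-1/n}^{n,1}(\partial\Omega)$, and composing with $\mathcal E$ gives $|Tg|=|\Lambda(\mathcal Eg)|\le C\big(\|v\|_{L^{n/(n-1),\infty}}+\|G\|_{L^1}\big)\|g\|_{\mathcal B_{1-1/n}^{n,1}(\partial\Omega)}$; so $\gamma_0 v:=T$ lies in $\mathcal B_{-1+1/n}^{n/(n-1),\infty}(\partial\Omega)$ with the stated bound. Since every $\psi\in C^{1,1}(\overline{\Omega})$ with $\psi=0$ on $\partial\Omega$ belongs to $W^{2,\infty}(\Omega)\hookrightarrow L_2^{n,1}(\Omega)$ and has ${\rm Tr}\,\psi=0$, we get $\langle\gamma_0 v,\partial_\nu\psi\rangle=\Lambda(\psi)$, which is (\ref{Green identity}). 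For uniqueness, the set $\{\partial_\nu\psi:\psi\in C^{1,1}(\overline{\Omega}),\,\psi|_{\partial\Omega}=0\}$ is dense in $\mathcal B_{1-1/n}^{n,1}(\partial\Omega)$, since it contains $C^\infty(\partial\Omega)$ through $\partial_\nu(\rho\phi)=\phi|_{\partial\Omega}$ for $\phi\in C^\infty(\overline{\Omega})$ and a fixed $\rho\in C^{1,1}(\overline{\Omega})$ with $\rho|_{\partial\Omega}=0$ and $\partial_\nu\rho\equiv1$ (obtainable from the distance function to $\partial\Omega$, which is $C^{1,1}$ near $\partial\Omega$ because $\Omega$ is of class $C^{1,1}$); thus any two functionals satisfying (\ref{Green identity}) must coincide.

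Finally, for $v\in L_1^p(\Omega)=W^{1,p}(\Omega)$ I would identify $\gamma_0 v={\rm Tr}\,v$. Integrating the first term of the very weak identity by parts gives $\int_\Omega(\nabla v\cdot\nabla\psi+v\,b\cdot\nabla\psi+cv\psi+G\cdot\nabla\psi)\,dx=0$ for $\psi\in C_c^\infty(\Omega)$, and this extends to all $\psi\in C^{1,1}(\overline{\Omega})$ with $\psi|_{\partial\Omega}=0$ by approximating $\psi$ with $\psi\chi_\varepsilon$, $\chi_\varepsilon$ a boundary cutoff: because $|\psi|\lesssim{\rm dist}(\cdot,\partial\Omega)$ near $\partial\Omega$ absorbs the $O(\varepsilon^{-1})$ size of $\nabla\chi_\varepsilon$, the gradients $\nabla(\psi\chi_\varepsilon)$ stay uniformly bounded and converge a.e., so dominated convergence handles the $L^1$-term $G$ and the remaining terms pass similarly. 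Comparing this with Green's first identity $\int_\Omega\nabla v\cdot\nabla\psi\,dx=\int_\Omega v(-\Delta\psi)\,dx+\int_{\partial\Omega}({\rm Tr}\,v)\,\partial_\nu\psi\,dS$, valid for $v\in W^{1,p}(\Omega)$ and $\psi\in C^{1,1}(\overline{\Omega})$, the volume terms cancel and one is left with $\langle{\rm Tr}\,v,\partial_\nu\psi\rangle=\int_\Omega v(-\Delta\psi+b\cdot\nabla\psi+c\psi)\,dx+\int_\Omega G\cdot\nabla\psi\,dx=\langle\gamma_0 v,\partial_\nu\psi\rangle$ for all such $\psi$, so $\gamma_0 v={\rm Tr}\,v$ by the density just noted.
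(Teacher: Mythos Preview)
Your proposal is correct and follows essentially the same duality strategy as the paper: both arguments hinge on a bounded right inverse $\mathcal E$ of $\partial_\nu : L_{2,0}^{n,1}(\Omega)\to\mathcal B_{1-1/n}^{n,1}(\partial\Omega)$ together with the density of $C_c^\infty(\Omega)$ in the double-zero-trace space $L_{2,{\bf 0}}^{n,1}(\Omega)$, and your ``factor $\Lambda$ through $\partial_\nu$'' is just a repackaging of the paper's ``define $\gamma_0 v$ via $\widetilde{\rm Ex}_2 h$ and then check consistency on $\psi-\widetilde{\rm Ex}_2(\partial_\nu\psi)\in \widetilde{L}_2^{n,1}(\Omega)$''. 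The paper differs only in organization: it first absorbs the lower-order terms $vb$ and $cv$ into $G$ to reduce to $b=c=0$, and it obtains the right inverse $\mathcal E=\widetilde{\rm Ex}(0,\cdot)$ not by your half-space construction but by real interpolation (in the Lorentz index) of the classical $W^{2,p}$ normal-trace theorem on $C^{1,1}$-domains---this interpolation route is shorter than the direct calculation you flag as the main obstacle.
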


In particular, Theorem \ref{trace results-VWS} shows that if $v \in L^{n/(n-1),\infty}(\Omega )$ is a distributional solution of $-\Delta v ={\rm div}\, G$ in $\Omega$ for some $G\in L^1 (\Omega ; \R^n )$, then it has a well-defined trace $\gamma_0 v$ belonging to  $\mathcal{B}_{-1+1/n}^{n/(n-1),\infty}(\partial\Omega )$, which extends the trace results in \cite[Theorem 8]{Kim09} and \cite[Lemma 2]{AR11}.

Very weak solutions of (\ref{eq:D-2-introd}) can be defined as follows.

\begin{dfn}\label{def-solutions}  Suppose that $ G \in L^1  (\Omega ;\R^n )$ and $v_D \in \mathcal{B}_{-1+1/n}^{n/(n-1),\infty}(\partial\Omega )$.
Then by a very weak solution of (\ref{eq:D-2-introd}), we mean a function $v \in L^{n/(n-1) ,\infty}(\Omega )$ such that
\[
v|b|,\, cv \in L^1 (\Omega )
\]
and
\[
\left\langle  v_D ,  \partial_\nu \psi   \right\rangle =\int_\Omega v \left(- \Delta \psi  + b \cdot \nabla \psi +c\psi \right)  dx + \int_\Omega G \cdot \nabla \psi \, d x
\]
for all $\psi \in C^{1,1} (\overline{\Omega} )$ with $\psi =0$ on $\partial\Omega$.
\end{dfn}

It follows from Theorem \ref{trace results-VWS}   that if $v$ is a very weak solution    of  (\ref{eq:D-2-introd}) in the sense of Definition \ref{def-solutions}, then it has a well-defined  trace $\gamma_0 v $ in $\mathcal{B}_{-1+1/n}^{n/(n-1),\infty}(\partial\Omega )$ and  $\gamma_0 v = v_D$.

Note also  that if $0 \le \beta <  \alpha < 1$, then
$$
 \mathcal{B}_{\alpha -1/p(\alpha )}^{p(\alpha ),\infty}(\partial\Omega )  \hookrightarrow
\mathcal{B}_{\beta -1/p(\beta )}^{p(\beta ),\infty}(\partial\Omega )  \hookrightarrow \mathcal{B}_{-1+1/n}^{n/(n-1),\infty}(\partial\Omega )
$$
(see the proof of Theorem \ref{Poisson-div-L1 data-introd}).  The following is the second main result in the paper.

\begin{thm}\label{Poisson-div-L1 data-introd}
Let $\Omega$ be a bounded $C^{1,1}$-domain in $\R^n , n \ge 2 $.  Suppose that $b \in L^{n,1}(\Omega ; \R^n )$,    $ c \in L^{n,1}(\Omega )$, and $c \ge 0$ in $\Omega$.
Then for every $ G \in L^1  (\Omega ;\R^n )$  and $v_D \in \mathcal{B}_{-1+1/n}^{n/(n-1),\infty}(\partial\Omega ) $, there exists a unique very weak solution  $v  $  of  (\ref{eq:D-2-introd}), which satisfies
\[
 \|   v \|_{L^{n/(n-1),\infty}(\Omega ) }  \le C \Big( \|G \|_{L^1  (\Omega ;\R^n )} + \|v_D \|_{\mathcal{B}_{-1+1/n}^{n/(n-1),\infty}(\partial\Omega )} \Big)
\]
for some  $C=C(n,  \Omega, b,c )$. In addition, if
$v_D \in \mathcal{B}_{\alpha  - 1/p(\alpha)}^{p(\alpha ),\infty}(\partial\Omega )$ for some $0 <\alpha < 1$, then  we have
\[
v \in L_{\alpha}^{p(\alpha) ,\infty}(\Omega ) \quad\mbox{and}\quad
 \|   v \|_{L_{\alpha}^{p(\alpha) ,\infty}(\Omega )  } \le C \Big( \|G \|_{L^1  (\Omega ;\R^n )} + \|v_D \|_{\mathcal{B}_{\alpha  - 1/p(\alpha)}^{p(\alpha ),\infty}(\partial\Omega )} \Big) ,
\]
 where  $C=C(n,\alpha ,   \Omega, b,c )$.
\end{thm}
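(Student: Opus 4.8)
The plan is to prove both assertions at once by a duality argument based on Theorem~\ref{Poisson-L1 data} applied to the adjoint Dirichlet problem
\begin{equation}\label{adjoint-plan}
-\Delta u+b\cdot\nabla u+cu=f\ \text{ in }\Omega,\qquad u=0\ \text{ on }\partial\Omega,
\end{equation}
which is formally adjoint to \eqref{eq:D-2-introd}. Fix $0\le\alpha<1$ and set $p=p(\alpha)=n/(n-1+\alpha)$, so that $p'=n/(1-\alpha)$, $2-\alpha=1+n/p'$ and $1/p-\alpha=(1-\alpha)-1/p'=(n-1)(1-\alpha)/n$. With these identifications the base estimate of the theorem is exactly the case $\alpha=0$, in which $L^{p(0),\infty}_{0}(\Omega)=L^{n/(n-1),\infty}(\Omega)$, while the ``in addition'' part is $0<\alpha<1$, so a single argument covers both. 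Throughout, $X_\alpha$ denotes the predual of $L^{p,\infty}_{\alpha}(\Omega)$, namely the closure of $C_c^\infty(\Omega)$ in the negative-order Sobolev--Lorentz space $L^{-\alpha}_{p',1}(\R^n)$, so that $L^{p,\infty}_{\alpha}(\Omega)=(X_\alpha)^{*}$ up to equivalent norms and $X_0=L^{n,1}(\Omega)$; note $X_\alpha\hookrightarrow L^{-1}_{n/(n-1),\infty}(\Omega)$.

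\textbf{Step 1: a sharp estimate for the adjoint problem.} Let $f\in X_\alpha$. Since $X_\alpha\hookrightarrow L^{-1}_{n/(n-1),\infty}(\Omega)$ and $c\ge0$, Theorem~\ref{Poisson-L1 data} with $u_D=0$ (applicable because $\Omega$, being $C^{1,1}$, is in particular a $C^1$-domain) yields the unique weak solution $u=u_f$ of \eqref{adjoint-plan} with a base bound. Writing $-\Delta u_f=f-b\cdot\nabla u_f-cu_f$ and bootstrapping — using Calder\'on--Zygmund estimates for the Dirichlet Laplacian on the $C^{1,1}$-domain $\Omega$ in the Sobolev--Lorentz scale, together with H\"older's inequality in Lorentz spaces to return the lower-order terms to $L^{-\alpha}_{p',1}$ (here $b,c\in L^{n,1}(\Omega)$ and the associated critical Lorentz embeddings are used) — one reaches
\[
\|u_f\|_{L^{2-\alpha}_{p',1}(\Omega)}\le C\,\|f\|_{X_\alpha},\qquad C=C(n,\alpha,\Omega,b,c).
\]
Because $2-\alpha=1+n/p'$, the endpoint embedding $L^{2-\alpha}_{p',1}(\Omega)\hookrightarrow C^{1}(\overline\Omega)$, valid \emph{precisely} because the third index equals $1$, gives $\|\nabla u_f\|_{L^\infty(\Omega)}\le C\|f\|_{X_\alpha}$; and taking the trace of $\nabla u_f\in L^{1-\alpha}_{p',1}(\Omega)$ gives
\[
\partial_\nu u_f\in\mathcal B^{p',1}_{1/p-\alpha}(\partial\Omega),\qquad \|\partial_\nu u_f\|_{\mathcal B^{p',1}_{1/p-\alpha}(\partial\Omega)}\le C\|f\|_{X_\alpha},
\]
using $(1-\alpha)-1/p'=1/p-\alpha$.

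\textbf{Step 2: duality.} For $f\in C_c^\infty(\Omega)$ the adjoint solution $u_f$ lies in $L^{2}_{n,1}(\Omega)\cap W^{2,s}(\Omega)$ for all $s<\infty$; by a density argument exploiting the $C^{1,1}$-regularity of $\partial\Omega$, the identity of Definition~\ref{def-solutions} extends to $\psi=u_f$. Hence any very weak solution $v$ of \eqref{eq:D-2-introd} satisfies
\[
\int_\Omega vf\,dx=\langle v_D,\partial_\nu u_f\rangle-\int_\Omega G\cdot\nabla u_f\,dx ,
\]
so by Step~1, the boundary duality $\mathcal B^{p,\infty}_{\alpha-1/p}(\partial\Omega)=\big(\mathcal B^{p',1}_{1/p-\alpha}(\partial\Omega)\big)^{*}$, and H\"older's inequality,
\[
\Big|\int_\Omega vf\,dx\Big|\le C\Big(\|v_D\|_{\mathcal B^{p,\infty}_{\alpha-1/p}(\partial\Omega)}+\|G\|_{L^1(\Omega;\R^n)}\Big)\|f\|_{X_\alpha}.
\]
Since $C_c^\infty(\Omega)$ is dense in $X_\alpha$ and $L^{p,\infty}_{\alpha}(\Omega)=(X_\alpha)^{*}$, this gives $v\in L^{p,\infty}_{\alpha}(\Omega)$ with the asserted bound; for $0<\alpha<1$ one first invokes the case $\alpha=0$ (legitimate because $\mathcal B^{p(\alpha),\infty}_{\alpha-1/p(\alpha)}(\partial\Omega)\hookrightarrow\mathcal B^{n/(n-1),\infty}_{-1+1/n}(\partial\Omega)$) to know $v$ exists, then improves its estimate. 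Conversely, reading the computation backwards, $f\mapsto\langle v_D,\partial_\nu u_f\rangle-\int_\Omega G\cdot\nabla u_f\,dx$ is a bounded functional on $X_0=L^{n,1}(\Omega)$, hence equals $\int_\Omega vf\,dx$ for a unique $v\in L^{n/(n-1),\infty}(\Omega)$; one checks $v|b|,cv\in L^1(\Omega)$ (H\"older in Lorentz spaces, $b,c\in L^{n,1}$, $v\in L^{n/(n-1),\infty}$) and, taking $f=-\Delta\psi+b\cdot\nabla\psi+c\psi\in L^{n,1}(\Omega)$ for an arbitrary admissible $\psi$ (so that $u_f=\psi$ by uniqueness for \eqref{adjoint-plan}), that $v$ solves \eqref{eq:D-2-introd} in the sense of Definition~\ref{def-solutions}. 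Uniqueness is immediate: a very weak solution with $G=0$, $v_D=0$ satisfies $\int_\Omega vf\,dx=0$ for all $f\in C_c^\infty(\Omega)$, hence $v=0$.

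\textbf{Expected main obstacle.} Everything rests on Step~1 — the sharp up-to-the-boundary regularity for the adjoint problem in the fine Sobolev--Lorentz scale, landing at smoothness exactly $2-\alpha$ with third index $1$. This borderline estimate fails with a larger third index, so the Lorentz refinement is indispensable: it is what forces $\nabla u_f\in L^\infty(\Omega)$ and places $\partial_\nu u_f$ in the predual $\mathcal B^{p',1}_{1/p-\alpha}(\partial\Omega)$ of the boundary space to which $v_D$ belongs. Two further delicate points are the absorption of $b\cdot\nabla u_f$ and $cu_f$ along the bootstrap, which relies on the critical Lorentz embeddings attached to $b,c\in L^{n,1}(\Omega)$ (and on $c\ge0$ for the solvability input from Theorem~\ref{Poisson-L1 data}), and the density argument legitimizing $\psi=u_f$ as a test function in Definition~\ref{def-solutions} on the $C^{1,1}$-domain $\Omega$.
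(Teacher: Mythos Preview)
Your argument for existence, uniqueness, and the base estimate (the case $\alpha=0$) is essentially the paper's: both construct $v\in L^{n/(n-1),\infty}(\Omega)=[L^{n,1}(\Omega)]^{*}$ by duality with the adjoint Dirichlet problem, relying on the $L_2^{n,1}$ solvability result for $L=-\Delta+b\cdot\nabla+c$ (Theorem~\ref{existence of solutions}~(ii), taken from \cite{KO}) to bound the functional $f\mapsto\langle v_D,\partial_\nu L^{-1}f\rangle-\int_\Omega G\cdot\nabla L^{-1}f\,dx$.

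For the regularity upgrade ($0<\alpha<1$), however, your Step~1 has a real gap. The absorption in your bootstrap does not close at the critical scale. Indeed, with $p'=p(\alpha)'=n/(1-\alpha)$ one has $(1-\alpha)p'=n$, so $\|\nabla u_f\|_{L^\infty}\lesssim\|u_f\|_{L_{2-\alpha}^{p',1}}$, and hence
\[
\|b\cdot\nabla u_f\|_{L_{-\alpha}^{p',1}}\lesssim\|b\cdot\nabla u_f\|_{L^{n,1}}\lesssim\|b\|_{L^{n,1}}\|u_f\|_{L_{2-\alpha}^{p',1}},
\]
and similarly for $cu_f$. This is of the \emph{same} order as the left-hand side of the estimate you want, and cannot be absorbed without smallness of $\|b\|_{L^{n,1}}+\|c\|_{L^{n,1}}$, which is not assumed. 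One can repair this (e.g.\ by a small-plus-smooth decomposition of $b,c$ combined with an interpolation or compactness argument), but it is substantially more than the ``bootstrap'' you describe. A minor related point: the predual of $L_{-\alpha}^{p',1}(\Omega)$ is $R_\Omega\bigl(\widetilde{L}_\alpha^{p,\infty}(\overline{\Omega})\bigr)$ (Lemma~\ref{real-interp-S on domains}~(iv)), which embeds into but is not a priori equal to $L_\alpha^{p,\infty}(\Omega)$; this does not affect the conclusion but your identification $L_\alpha^{p,\infty}(\Omega)=(X_\alpha)^*$ is not literally correct.

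The paper sidesteps the absorption issue entirely by moving the lower-order terms to the $v$-side rather than the $u_f$-side. After the base step gives $v\in L^{n/(n-1),\infty}(\Omega)$, one has $cv,\,vb\in L^1$; writing $cv={\rm div}\,G_1$ with $G_1\in L^1$ (Remark~\ref{rep of measure as div}) and setting $F=G+vb-G_1\in L^1$, the problem reduces to $-\Delta v={\rm div}\,F$ with $v=v_D$ on $\partial\Omega$. Duality is then carried out against the \emph{pure Laplacian's} solution operator $T$, for which the sharp estimate $\|T(g)\|_{L_{2-\alpha}^{p',1}}\lesssim\|g\|_{L_{-\alpha}^{p',1}}$ is available directly from Theorem~\ref{Poisson-FSL spaces}~(ii), with no lower-order terms to absorb. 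This is precisely the simplification the paper's route buys over yours.
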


\begin{rmk}\label{rmk-main2}
Let $v$  be the very weak solution of  (\ref{eq:D-2-introd}) with $ G \in L^1  (\Omega ;\R^n )$  and $v_D \in \mathcal{B}_{\alpha  - 1/p(\alpha)}^{p(\alpha ),\infty}(\partial\Omega )$ for some $0 <\alpha < 1$. Then it follows from the embedding result (\ref{embedding of L into L and B})     that
\[
v \in   B_{\beta}^{p(\beta) ,\infty}(\Omega )
\quad\mbox{and}\quad
 \|   v \|_{ B_{\beta}^{p(\beta) ,\infty}(\Omega )} \le C \|   v \|_{ L_{\alpha}^{p(\alpha) ,\infty}(\Omega )}
\]
for every $0 \le \beta < \alpha$, where  $C=C(n,\alpha , \beta,   \Omega )$.
Let $\mathcal{M}  (\partial \Omega )$ be the Banach space of all signed  Radon measures   on $\partial \Omega$ with finite total variation.
Then since
$$
\mathcal{B}_{1/p(\beta )-\beta}^{p(\beta) ',1}(\partial\Omega ) = {\rm Tr}\, (L_{1-\beta}^{p(\beta )',1}(\Omega )  ) \quad\mbox{and}\quad L_{1-\beta}^{p(\beta )',1}(\Omega )  \hookrightarrow C(\overline{\Omega})
$$
 (see (\ref{properties of SL on domains})), we have
$$
\mathcal{M}(\partial\Omega ) = \left[ C(\partial \Omega )\right]^*  \hookrightarrow \left[\mathcal{B}_{1/p(\beta )-\beta}^{p(\beta) ',1}(\partial\Omega ) \right]^* =  \mathcal{B}_{\beta -1/p(\beta )}^{p(\beta ),\infty}(\partial\Omega )
$$
for every $0 \le \beta < 1$. Therefore, if  $v_D \in \mathcal{M}(\partial\Omega)$, then the solution $v$ satisfies
\[
v \in L_{\beta}^{p(\beta) ,\infty}(\Omega ) \cap B_{\beta}^{p(\beta) ,\infty}(\Omega )
\quad\mbox{for every}\,\, 0 \le \beta < 1.
\]
\end{rmk}

For the special case when $v_D =0$,  existence and uniqueness of a very weak solution  $v  $  of  (\ref{eq:D-2-introd}) were already proved    in \cite[Corollary 1.2]{KO}. However  the optimal regularity results for  $v$  in Theorem   \ref{Poisson-div-L1 data-introd} and Remark \ref{rmk-main2} are   new  even for   the Poisson equation.

\medskip
The rest of the paper is organized as follows. In the preliminary Section 2, we review standard results for Lorentz spaces and then introduce the definition and basic properties of Sobolev-Lorentz spaces on the whole space $\R^n$. Sobolev-Lorentz spaces on domains $\Omega$ are studied in details in Section 3. Here we provide quite complete proofs of such fundamental properties for Sobolev-Lorentz spaces  as   trace, interpolation, density, duality, embedding, and so on. We also introduce classical Besov spaces on domains and boundary Besov spaces. Section 4 is devoted to studying the Poisson equation  with measure data. In Section 5, making essential use of the results in Section 4, we prove  Theorems \ref{Poisson-L1 data} and \ref{Poisson-div-L1 data-introd} which are the    main optimal regularity results in the paper. In the final Section 6, we provide a detailed proof of   Theorem \ref{trace results-VWS} which is our   trace result for very weak solutions of (\ref{eq:D-2-introd}).

\section{Preliminaries}

For two nonnegative quantities $A$ and $B$, we write $A \lesssim B$ if there is a constant $C$ such that $A \le C B$. If $A \lesssim B$ and $B \lesssim A$, we write $A \approx B$.
For two Banach spaces $X$ and $Y$, we say that $X$ is continuously embedded into $Y$ and write $X \hookrightarrow Y$ if    $X \subset Y$ and  $\| u \|_Y \lesssim \|u\|_{X}$ for all $u \in X$. If $X \hookrightarrow Y$ and $Y \hookrightarrow X$, we write $X=Y$.

\subsection{Lorentz spaces}

Let $\Omega$ be any domain in $\R^n$. For    $1<  p<\infty$  and $1 \le q \le \infty$ or for $1\le p=q \le \infty$, let $ L^{p,q} (\Omega )$ denote the standard Lorentz space on $\Omega$ which is a Banach space equipped with norm $\|\cdot \|_{L^{p,q} (\Omega )}$. Note that $ L^{p,p} (\Omega  )$ coincides with the usual Lebesgue space $L^p (\Omega )$.

H\"{o}lder's inequality can be extended to Lorentz spaces. More precisely, for $1< p,p_1,p_2<\infty$ and  $1\le q,q_1,q_2 \le \infty$ satisfying  $1/p =1/p_1 +1/p_2$ and $1/q \le 1/q_1 +1/q_2$, there exists a constant $C=C(p_1,p_2,q,q_1,q_2)$ such that
\begin{equation}\label{Holder-L}
\|fg\|_{L^{p,q}(\Omega )}\le C \|f\|_{L^{p_1,q_1}(\Omega )} \|g\|_{L^{p_2,q_2}(\Omega)}
\end{equation}
for all $f \in L^{p_1,q_1}(\Omega )$ and $g \in L^{p_2, q_2}(\Omega )$ (see, e.g., \cite[Lemma 2.3]{KO}).

An important property of Lorentz spaces is that a Lorentz space is the real interpolation space of two Lebesgue spaces (see, e.g., \cite[Theorem 5.3.1]{bergh}):    if $ 1 \le   p_0 \neq p_1 \le  \infty $, $0<\theta<1$,   $1/p =(1-\theta)/p_0 + \theta /p_1$, and $1 \le q \le \infty$,
then
\begin{equation}\label{interpolation for L}
\left( L^{p_0 }(\Omega ), L^{p_1}(\Omega ) \right)_{\theta,q} = L^{p,q}(\Omega ).
\end{equation}
Further properties of Lorentz spaces may be deduced from    general results in  real interpolation theory.
Recall that a pair $(X_0 , X_1 )$ of Banach spaces is   compatible if both $X_0$ and $X_1$  are continuously embedded into a common Banach space. If $(X_0 , X_1 )$ is a compatible pair of Banach spaces, then the intersection $X_0 \cap X_1$ and the sum $X_0 +X_1$ are well-defined Banach spaces with the norms
\[
\|x\|_{X_0 \cap X_1}= \max \left\{\|x\|_{X_0} , \|x\|_{X_1} \right\}
\]
and
\[
\|x\|_{X_0 + X_1}= \inf \left\{ \|x_0 \|_{X_0}+ \|x_1\|_{X_1} \,:\, x=x_0 +x_1 ,\,\, x_0 \in X_0 ,\,\,x_1 \in X_1 \right\},
\]
respectively. For a Banach space $X$, we denote by  $X^*$  the dual space of $X$. The dual pairing of   $X$ and   $X^*$ will be  denoted by  $ \langle \cdot , \cdot \rangle_{X^* ,X}$ or simply $ \langle \cdot , \cdot \rangle$.

The following is an immediate consequence of  standard  density and duality results in real interpolation theory (see, e.g., \cite[Sections 3.4 and 3.7]{bergh}).

\begin{lem}\label{density and duality in real interp} Suppose that $(X_0 , X_1 )$ is compatible,  $D$ is a dense subset of $X_0 \cap X_1$, $0< \theta < 1$, and $1 \le q \le \infty$.
\begin{enumerate}[{\upshape (i)}]
\item  If $q<\infty$, then $D$ is dense in $\left( X_0 , X_1 \right)_{\theta , q}$.
\item  Suppose in addition that $D$ is dense in both $X_0$ and $X_1$. If $\widetilde{X}_{\theta, q}$ denotes the closure of $D$ in $\left( X_0 , X_1 \right)_{\theta , q}$, then $[\widetilde{X}_{\theta, q}]^* = \left( X_0^* , X_1^* \right)_{\theta , q'}$. In particular, if $  q < \infty$, then $     [\left( X_0 , X_1 \right)_{\theta , q}]^*=\left( X_0^* , X_1^* \right)_{\theta , q'}$.
\end{enumerate}
\end{lem}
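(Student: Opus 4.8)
The plan is to obtain both assertions from two standard theorems of real interpolation theory: the density theorem, which says that $X_0\cap X_1$ is dense in $(X_0,X_1)_{\theta,q}$ whenever $q<\infty$, and the duality theorem, which identifies the dual of the regular part of $(X_0,X_1)_{\theta,q}$ with $(X_0^*,X_1^*)_{\theta,q'}$ (see \cite[Sections 3.4 and 3.7]{bergh}). With these in hand, the proof reduces to transferring the density hypothesis on $D$ to the interpolation space and to recognizing $\widetilde X_{\theta,q}$ as the closure of $X_0\cap X_1$.

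For (i) I would argue as follows. Since $D\subset X_0\cap X_1$, $D$ is dense in $X_0\cap X_1$ in the intersection norm, and $X_0\cap X_1\hookrightarrow (X_0,X_1)_{\theta,q}$ continuously, $D$ is dense in $X_0\cap X_1$ also in the norm of $(X_0,X_1)_{\theta,q}$; hence the closure of $D$ in $(X_0,X_1)_{\theta,q}$ coincides with the closure of $X_0\cap X_1$ in $(X_0,X_1)_{\theta,q}$. When $q<\infty$, the density theorem gives that this closure is all of $(X_0,X_1)_{\theta,q}$, which is the claim.

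For (ii), first observe that the extra hypothesis --- $D$ dense in both $X_0$ and $X_1$ --- forces $X_0\cap X_1$ to be dense in each of $X_0$ and $X_1$. Consequently the restriction maps $X_0^*\to (X_0\cap X_1)^*$ and $X_1^*\to (X_0\cap X_1)^*$ are injective, so $X_0^*$ and $X_1^*$ are both continuously embedded in the common Banach space $(X_0\cap X_1)^*$; thus $(X_0^*,X_1^*)$ is a compatible pair and $(X_0^*,X_1^*)_{\theta,q'}$ is well defined. As noted in the proof of (i), $\widetilde X_{\theta,q}$, the closure of $D$ in $(X_0,X_1)_{\theta,q}$, is exactly the closure of $X_0\cap X_1$ there, i.e. the regular part of the interpolation space, and the duality theorem applied to this regular part yields $[\widetilde X_{\theta,q}]^*=(X_0^*,X_1^*)_{\theta,q'}$, with equivalence of norms (which is what the equality sign means in our convention). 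Finally, when $q<\infty$, part (i) gives $\widetilde X_{\theta,q}=(X_0,X_1)_{\theta,q}$, hence $[(X_0,X_1)_{\theta,q}]^*=(X_0^*,X_1^*)_{\theta,q'}$.

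I do not expect any genuine obstacle: the analytic substance is entirely contained in the cited density and duality theorems of \cite{bergh}. The only points that need a line of care are that the assumption on $D$ is strong enough to supply the density of $X_0\cap X_1$ in $X_0$ and in $X_1$ required by the duality theorem, and that for $q=\infty$ one must keep $\widetilde X_{\theta,q}$ (the closure of $D$) rather than the full space $(X_0,X_1)_{\theta,\infty}$, since the duality statement fails for the latter in general.
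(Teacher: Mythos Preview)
Your proposal is correct and follows essentially the same approach as the paper: the paper does not give a proof but simply states that the lemma ``is an immediate consequence of standard density and duality results in real interpolation theory (see, e.g., \cite[Sections 3.4 and 3.7]{bergh}),'' and your argument is precisely the standard unpacking of those two results, with the small bookkeeping observation that density of $D$ in $X_0\cap X_1$ transfers to the interpolation norm so that $\widetilde X_{\theta,q}$ coincides with the closure of $X_0\cap X_1$.
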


Let $C_c^\infty (\Omega )$ be the space of all functions in $C^\infty (\Omega )$ with support in $\Omega$. Then since $C_c^\infty (\Omega )$ is dense in any of   $L^{p_0}(\Omega ) $,   $  L^{p_1}(\Omega )$, and   $L^{p_0}(\Omega )\cap L^{p_1}(\Omega )$ for $1 \le p_0 ,p_1 <\infty$, it immediately follows from (\ref{interpolation for L}) and Lemma \ref{density and duality in real interp} that    if $\widetilde{L}^{p,q}(\Omega )$ is the closure of $C_c^\infty (\Omega )$ in $L^{p,q}(\Omega )$, then
\begin{equation}\label{widehat-L}
\left[ \widetilde{L}^{p,q} (\Omega )\right]^*
= L^{p',q'} (\Omega )  \quad\mbox{for}\,\, 1<p<\infty, \,\,1 \le q \le \infty.
\end{equation}
Moreover, if $1 \le q  <  \infty$, then $C_c^\infty (\Omega )$ is dense in $L^{p,q} (\Omega )$ and
\begin{equation}\label{duality for L}
\left[ L^{p,q} (\Omega )\right]^*
= L^{p',q'} (\Omega ) .
\end{equation}
Hence  $L^{p,q} (\Omega )$ is reflexive for $1 <p,q<\infty$.

It is well known that the family of  Lebesgue spaces  $L^p (\Omega )$ is a  complex interpolation scale for $1 \le p \le \infty$: more precisely,
if $1 \le   p_0 , p_1 \le \infty$,  $0 < \theta <  1$,  and  $1/p=(1-\theta)/p_0 +\theta/p_1$, then
\[
\left[\,  L^{p_0}(\Omega ) , L^{p_1}(\Omega ) \, \right]_{\theta} = L^{p}(\Omega ).
\]
The following  is a  complex counterpart of Lemma \ref{density and duality in real interp} (see  \cite[Sections 4.2 and 4.5]{bergh}).

\begin{lem}\label{density and duality in complex interp} Suppose that $(X_0 , X_1 )$ is compatible, $D$ is a dense subset of $X_0\cap X_1$,  and    $0< \theta < 1$.
\begin{enumerate}[{\upshape (i)}]
\item   $D$ is dense in $\left[ X_0 , X_1 \right]_{\theta}$.
\item  Suppose in addition that $X_0$ or $X_1$ is reflexive and $D$ is dense in both $X_0$ and $X_1$. Then $      \left[ X_0 , X_1 \right]_{\theta}^*=\left[ X_0^* , X_1^* \right]_{\theta}$.
\end{enumerate}
\end{lem}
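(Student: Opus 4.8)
The plan is to reduce both assertions to the standard machinery of the complex interpolation method as developed in \cite[Sections 4.2, 4.3, and 4.5]{bergh}; the only substantive point is to verify that the hypothesis ``$D$ is dense'' propagates to the spaces in which it is actually needed there (namely $X_0\cap X_1$ and the couple $(X_0,X_1)$ itself). Throughout, $[X_0,X_1]_\theta$ denotes Calderón's (lower) complex interpolation functor, and $[X_0,X_1]^\theta$ the upper one.

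For part (i), I would start from the elementary interpolation inequality $\|x\|_{[X_0,X_1]_\theta}\le \|x\|_{X_0}^{1-\theta}\|x\|_{X_1}^{\theta}$, valid for every $x\in X_0\cap X_1$; in particular $\|x\|_{[X_0,X_1]_\theta}\le \|x\|_{X_0\cap X_1}$, so the inclusion $X_0\cap X_1\hookrightarrow [X_0,X_1]_\theta$ is continuous. Hence convergence in $X_0\cap X_1$ implies convergence in $[X_0,X_1]_\theta$, and since $D$ is dense in $X_0\cap X_1$ for its own norm, it is a fortiori dense in $X_0\cap X_1$ for the (weaker) topology induced by $[X_0,X_1]_\theta$. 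Combining this with the standard fact that $X_0\cap X_1$ is dense in $[X_0,X_1]_\theta$, transitivity of density gives that $D$ is dense in $[X_0,X_1]_\theta$.

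For part (ii), the decisive input is Calderón's duality theorem: if $X_0\cap X_1$ is dense in $X_0$ and in $X_1$, then $[X_0,X_1]_\theta^{*}=[X_0^{*},X_1^{*}]^{\theta}$. Its density hypothesis holds in our setting because $D\subseteq X_0\cap X_1$ and $D$ is dense in each of $X_0,X_1$, so $X_0\cap X_1$ is dense in each of $X_0,X_1$ as well. It then remains to replace the upper functor $[X_0^{*},X_1^{*}]^{\theta}$ by the usual lower one $[X_0^{*},X_1^{*}]_{\theta}$, and this is exactly where reflexivity is used: if, say, $X_0$ is reflexive, then $X_0^{*}$ is reflexive, and for a compatible couple in which one member is reflexive the two complex methods coincide, so $[X_0^{*},X_1^{*}]^{\theta}=[X_0^{*},X_1^{*}]_{\theta}$. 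Chaining the two identities yields $[X_0,X_1]_\theta^{*}=[X_0^{*},X_1^{*}]_{\theta}$, as asserted.

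I do not anticipate a genuine obstacle: the argument is an assembly of known statements from \cite{bergh}. The one thing requiring care is the logical bookkeeping in (ii) — Calderón's duality identity by itself needs only the density of $X_0\cap X_1$ in $X_0$ and $X_1$, whereas reflexivity is invoked solely to pass from the upper to the lower complex functor in the final answer — so I would state these two steps separately rather than quoting a single ``duality theorem'' that silently bundles both hypotheses together.
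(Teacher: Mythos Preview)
Your proposal is correct and takes essentially the same approach as the paper, which does not give an independent proof but simply refers to \cite[Sections 4.2 and 4.5]{bergh}; your argument is precisely an unpacking of those sections (Theorem~4.2.2 for the density in (i), and Theorem~4.5.1 together with Corollary~4.5.2 for the duality in (ii)). Your careful separation of the two inputs in (ii) --- density of $X_0\cap X_1$ for Calder\'on's duality with the upper functor, and reflexivity to pass from $[X_0^*,X_1^*]^\theta$ to $[X_0^*,X_1^*]_\theta$ --- matches exactly how Bergh--L\"ofstr\"om organize the proof of their Corollary~4.5.2.
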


\subsection{Sobolev-Lorentz spaces on $\R^n$}

As in Subsection 1.1, for $-\infty<\alpha<\infty$,   $1< p<\infty$, and $1 \le q \le \infty$, we denote by $L_\alpha^{p,q}(\R^n )$ the space of all  tempered  distributions  $f$ on $\R^n $ such that $(I-\Delta )^{\alpha/2} f   \in L^{p,q} (\R^n )$. The Sobolev-Lorentz space $L_\alpha^{p,q}(\R^n )$ is a Banach space equipped with the   norm
\[
\|f\|_{L_\alpha^{p,q}(\R^n )} = \|(I-\Delta )^{\alpha/2} f \|_{L^{p,q} (\R^n )}.
\]
Note in particular that $L_0^{p,q}(\R^n )= L^{p,q}(\R^n )$.
If $1<p=q<\infty$, we write $L_\alpha^{p}(\R^n )= L_\alpha^{p,p}(\R^n )$.
Recall then from   interpolation by the complex method  (see, e.g., \cite[Theorem 6.4.5]{bergh}) that if $0<\theta < 1$, $\alpha =(1-\theta )\alpha_0 +\theta \alpha_1$, and  $1/p=(1-\theta)/p_0 +\theta/p_1$, then
\[
 L_\alpha^{p}(\R^n ) = \left[\,  L_{\alpha_0}^{p_0}(\R^n ) , L_{\alpha_1}^{p_1}(\R^n ) \, \right]_{\theta}.
\]
By definition, $(I-\Delta )^{\alpha/2}$ is an isometric isomorphism from
$L_\alpha^{p,q}(\R^n )$ onto $ L^{p,q}(\R^n )$ and its inverse is $(I-\Delta )^{-\alpha/2}$. Hence it immediately follows  from the   interpolation result (\ref{interpolation for L}) for Lorentz spaces  that if $ 1<  p_0 \neq p_1 < \infty $, $0<\theta<1$,    $1/p =(1-\theta)/p_0 + \theta /p_1$, and $1 \le q \le \infty$, then
\begin{equation}\label{interpolation for SL on Rn}
L_\alpha^{p,q}(\R^n )= \left( L_\alpha^{p_0 }(\R^n ), L_\alpha^{p_1}(\R^n ) \right)_{\theta,q}   .
\end{equation}

\medskip
The following are some basic properties of  Sobolev-Lorentz spaces $L_\alpha^{p,q}(\R^n)$.

\begin{lem}\label{properties of SL on Rn} Let $ -\infty<  \alpha, \alpha_0 < \infty$,  $1< p , p_0< \infty$, and    $1 \le q, q_0 \le \infty$.
\begin{enumerate}[{\upshape (i)}]
\item  If $\alpha  \ge \alpha_0$  and $q \le q_0 $, then   $L_\alpha^{p,q}(\R^n ) \hookrightarrow L_{\alpha_0}^{p,q_0 }(\R^n )  $. In particular, if $\alpha > 0$, then $L_\alpha^{p,q}(\R^n ) \hookrightarrow L^{p,q  }(\R^n )\hookrightarrow L_{-\alpha}^{p,q}(\R^n )$.
\item  If $\alpha-\alpha_0 = n/p-n/p_0 >0$, then $L_\alpha^{p,q}(\R^n)\hookrightarrow L_{\alpha_0}^{p_0 , q}(\R^n)$. In particular, if $0<  \alpha p < n $, then $L_\alpha^{p,q}(\R^n)\hookrightarrow L^{np/(n-\alpha p),q}(\R^n)$.
\item   If $\alpha p=n$, then $L_\alpha^{p,1}(\R^n)\hookrightarrow C_0 (\R^n )$, where $C_0 (\R^n )$ is the closure of $C_c^\infty (\R^n)$ in $L^\infty (\R^n )$.
\item   If $k$ is a positive integer, then $L_k^{p,q}(\R^n) =W^{k,p,q}(\R^n )$, where $W^{k,p,q}(\R^n )$ is the Banach space  of  all $f \in L^{p,q}(\R^n )$ such that $D^\gamma f \in L^{p,q}(\R^n )$ for all multi-indices $\gamma$ with $|\gamma | \le k$.
\item  If $q < \infty$, then $C_c^\infty (\R^n)$ is dense in  $L_\alpha^{p,q}(\R^n)$ and $\left[L_\alpha^{p,q}(\R^n) \right]^* = L_{-\alpha}^{p',q'}(\R^n)$.
\end{enumerate}
\end{lem}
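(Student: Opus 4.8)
The plan is to treat the five assertions in turn, reducing each to a known result on Bessel potential spaces $L^p_\alpha(\R^n)$ together with the interpolation identity \eqref{interpolation for SL on Rn} and the mapping properties of $(I-\Delta)^{\alpha/2}$. For (i), the embedding $L^{p,q}(\R^n)\hookrightarrow L^{p,q_0}(\R^n)$ for $q\le q_0$ is standard, and $(I-\Delta)^{(\alpha-\alpha_0)/2}$ maps $L^p_{\alpha}(\R^n)$ into $L^p_{\alpha_0}(\R^n)$ when $\alpha\ge\alpha_0$ by Mikhlin-type multiplier bounds; composing with the isometry $(I-\Delta)^{\alpha_0/2}\colon L^{p,q_0}_{\alpha_0}\to L^{p,q_0}$ and using \eqref{interpolation for SL on Rn} to pass from the $L^p$-scale estimate to the Lorentz scale gives the claim. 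The ``in particular'' statement follows by taking $\alpha_0=0$ (and then $\alpha_0=-\alpha$, reversing roles).

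For (ii), the Sobolev embedding $L^p_\alpha(\R^n)\hookrightarrow L^{p_0}_{\alpha_0}(\R^n)$ when $\alpha-\alpha_0=n/p-n/p_0>0$ is classical (e.g.\ via the Hardy--Littlewood--Sobolev inequality applied to the Bessel kernel, exactly as in Subsection \ref{sec1.1}); to upgrade it to the Lorentz scale I would fix $q$, pick two exponents $p_0^{(0)}\neq p_0^{(1)}$ straddling $p_0$ with corresponding $p^{(j)}$ straddling $p$ (adjusting the smoothness indices so that the gap $n/p-n/p_0$ is preserved), apply the $L^p$-scale embedding at the endpoints, and interpolate using \eqref{interpolation for SL on Rn} on both sides. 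The borderline case $0<\alpha p<n$, $p_0=np/(n-\alpha p)$, $\alpha_0=0$ is the stated corollary. For (iii), $L^p_\alpha(\R^n)\hookrightarrow C_0(\R^n)$ when $\alpha p=n$ follows from writing $f=(I-\Delta)^{-\alpha/2}g$ with $g\in L^{p,1}$ and estimating $\|K_\alpha * g\|_\infty$ via \eqref{Holder-L}, since $K_\alpha\in L^{p',\infty}(\R^n)$ when $\alpha p'<n$, i.e.\ exactly when $\alpha p=n$; density of $C_c^\infty$ then places the limit in $C_0$.

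For (iv), the identity $L^p_k(\R^n)=W^{k,p}(\R^n)$ is classical for $1<p<\infty$; the Lorentz version $L^{p,q}_k(\R^n)=W^{k,p,q}(\R^n)$ follows by the same Mikhlin multiplier argument, since each derivative $D^\gamma(I-\Delta)^{-k/2}$ with $|\gamma|\le k$ and each $(I-\Delta)^{k/2}R_\gamma$-type operator (Riesz transforms composed with Bessel potentials) is an $L^p$-multiplier of Hörmander--Mikhlin type, hence bounded on every $L^{p,q}$ by interpolation off the $L^p$-scale via \eqref{interpolation for L}. For (v), density of $C_c^\infty(\R^n)$ in $L^{p,q}_\alpha(\R^n)$ when $q<\infty$ follows from density of $C_c^\infty$ in $L^{p,q}$ when $q<\infty$ (pull back through the isomorphism $(I-\Delta)^{-\alpha/2}$, approximating and then mollifying/truncating), and the duality $[L^{p,q}_\alpha(\R^n)]^*=L^{p',q'}_{-\alpha}(\R^n)$ follows from $[L^{p,q}(\R^n)]^*=L^{p',q'}(\R^n)$ (which is \eqref{duality for L}, valid for $q<\infty$) by transporting the pairing through $(I-\Delta)^{\alpha/2}$: an element $\varphi\in L^{p',q'}_{-\alpha}$ acts on $f\in L^{p,q}_\alpha$ by $\langle\varphi,f\rangle=\int (I-\Delta)^{-\alpha/2}\varphi\,\cdot\,(I-\Delta)^{\alpha/2}f$, and one checks this is an isometric identification.

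The main obstacle is bookkeeping rather than depth: in (i), (ii), (iv) one must justify that Mikhlin-type Fourier multipliers, which are a priori bounded only on $L^p$, remain bounded on the genuine Lorentz spaces $L^{p,q}$ with $q\neq p$ (including $q=\infty$); the clean way is to invoke \eqref{interpolation for L}, observing that a multiplier bounded on $L^{p_0}$ and $L^{p_1}$ for $p_0\neq p_1$ straddling $p$ is bounded on $(L^{p_0},L^{p_1})_{\theta,q}=L^{p,q}$ for every $q$, and similarly \eqref{interpolation for SL on Rn} handles the smoothness side. Once this interpolation principle is set up once, all the embedding and identification statements follow uniformly, and the only genuinely analytic input beyond it is the kernel estimate $0<K_s(x)\le C(n,s)|x|^{s-n}$ already recalled in Subsection \ref{sec1.1}, used for (ii) and (iii).
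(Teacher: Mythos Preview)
Your proposal is correct and follows essentially the same route as the paper: parts (i), (ii), and (iv) are reduced to the known $L^p$-scale results plus real interpolation via \eqref{interpolation for SL on Rn} (the paper simply cites \cite{BKO} for these), part (iii) is proved exactly by the Bessel-kernel bound combined with \eqref{Holder-L}, and part (v) transports density and duality through the isometry $(I-\Delta)^{\alpha/2}$, with the paper routing the density argument through the intermediate integer-order space $L_k^{p,q}$ rather than mollifying/truncating directly. One small correction in (iii): the reason $K_\alpha\in L^{p',\infty}(\R^n)$ when $\alpha p=n$ is that then $(n-\alpha)p'=n$, not that ``$\alpha p'<n$'' (the latter would fail for $1<p\le 2$).
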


\begin{proof}
Parts (i)-(v)  are essentially known.
For instance, the embedding results (i) and (ii) are just special cases of \cite[Theorem 5.2]{BKO} and Part (iv) was proved in \cite[Theorem  5.5]{BKO}. 

To prove Part (v), suppose that $q<\infty$. Choose  $k\in \N$ and  $ 1<  p_0 \neq p_1 < \infty $ such that  $k>\alpha$ and  $2/p =1/p_0 + 1 /p_1$. Then by (iv), \eqref{interpolation for SL on Rn},  and (i),  we have
$$
\left( W^{k, p_0 }(\R^n ), W^{k, p_1}(\R^n ) \right)_{1/2,q} = L_k^{p,q}(\R^n )\hookrightarrow L_{\alpha}^{p,q }(\R^n ).
$$
It is well known that $C^\infty_c(\R^n )$ is dense in $W^{k, p_0 }(\R^n )\cap W^{k, p_1}(\R^n )$. Hence it follows from Lemma~\ref{density and duality in real interp} (i) that   $C^\infty_c(\R^n )$ is dense in $L_k^{p,q}(\R^n )$.
To prove that  $C_c^\infty (\R^n)$ is dense in  $L_\alpha^{p,q}(\R^n)$, it thus remains to prove that $L_k^{p,q}(\R^n )$ is dense in  $L_{\alpha}^{p,q }(\R^n )$. Let $f\in L_{\alpha}^{p,q }(\R^n )$ be given. Then by definition,  $g =(I-\Delta )^{\alpha/2}f$  belongs to $L^{p,q}(\R^n )$. Since $q<\infty$,  $C_c^\infty (\R^n)$ is dense in  $L^{p,q}(\R^n)$. Hence there is a sequence  $\{g_j \}$  in $C_c^\infty (\R^n)$ such that $g_j \to g$ in $L^{p,q}(\R^n)$. For each $j$, define $f_j = (I-\Delta )^{-\alpha/2}g_j$. Then   $\{ f_j  \}$ is a sequence in $L_k^{p,q}(\R^n )$ that converges to  $ f$ in $L_{\alpha}^{p,q }(\R^n )$.  This proves  the density result in Part (v).
Then the duality result in Part (v)  easily follows from the duality result (\ref{duality for L}) for Lorentz spaces since $(I-\Delta )^{\alpha/2}$ is an isometric isomorphism from $L_\alpha^{p,q}(\R^n )$ onto $ L^{p,q}(\R^n )$.

We finally  prove Part (iii). By (v), it suffices to prove that
\begin{equation}\label{embedding into BC}
\|f\|_{L^\infty (\R^n )} \le C(n,p) \|f\|_{L_{n/p}^{p,1 }(\R^n )} \quad \mbox{for all}\,\, f \in C^\infty_c(\R^n ).
\end{equation}
Although (\ref{embedding into BC}) is a consequence of \cite[Theorem  5.8 (iii)]{BKO}, here we provide a rather simple  proof for the sake of the readers' convenience.
Let $f \in C_c^\infty (\R^n)$  and $g = (I-\Delta )^{\alpha} f$, where $\alpha=n/p$. Then
\[
f (x)= (K_\alpha * g )(x) =\int_{\R^n} K_\alpha (y) g(x-y)\, dy
\]
for some  $K_\alpha  \in L^{p',\infty} (\R^n )$ as   in Subsection \ref{sec1.1}. Hence by H\"older's inequality (\ref{Holder-L}) in Lorentz spaces, we have
\begin{align*}
|f(x)| \lesssim \|K_{\alpha} \|_{L^{p',\infty} (\R^n )} \|g(x-\cdot ) \|_{L^{p,1} (\R^n )}  \lesssim   \|g  \|_{L^{p,1} (\R^n )} =   \|f \|_{L_{n/p}^{p,1}(\R^n)}
\end{align*}
for   all  $x\in \R^n$, which proves  (\ref{embedding into BC}).
 This completes the proof of the lemma.
\end{proof}

\section{Sobolev-Lorentz spaces on  domains}

Throughout this section, let $\Omega$ be a  bounded Lipschitz domain in $\R^n$. If $f$ is a function on $\R^n$, let $R_\Omega f$ denote the restriction of $f$ to $\Omega$.

\subsection{Definitions and basic properties}

As in Subsection 1.2, for $\alpha  \ge  0$, $1< p<\infty$, and $1 \le q \le \infty$, let
$L_\alpha^{p,q}(\Omega )$ be the space of the   restrictions to $\Omega$ of all functions in $L_\alpha^{p,q}(\R^n )$,
which is a Banach space equipped with the usual quotient  norm
\[
\|u\|_{L_\alpha^{p,q}(\Omega )} = \inf \left\{ \|f\|_{L_\alpha^{p,q}(\R^n )}  \,:\, f\in L_\alpha^{p,q}(\R^n ),\ R_\Omega f = u \right\}.
\]
By definition,  $R_\Omega$ is bounded   from $L_\alpha^{p,q}(\R^n )$ into $L_\alpha^{p,q}(\Omega )$. Note also  that    $L_0^{p,q}(\Omega ) = L^{p,q}(\Omega )$ and    $L_0^{p,p} (\Omega )= L^p (\Omega )$. If $1< p=q <\infty$,  we write $L_\alpha^{p }(\Omega )=L_\alpha^{p,p}(\Omega )$. Then it is well known (see, e.g.,  \cite[p.329]{FMM}) that $L_\alpha^{p }(\Omega )$ is   reflexive for $\alpha \ge  0$ and $1<p<\infty$. Moreover,   it follows from     \cite[Proposition  2.4]{JK}   that $L_\alpha^{p }(\Omega )$   is a   complex interpolation scale  for $\alpha \ge  0$ and $1<p<\infty$, that is, if $0 \le      \alpha_0 , \alpha_1 < \infty$,  $1<     p_0 , p_1 < \infty$, $0<\theta < 1$,
$\alpha =(1-\theta )\alpha_0 +\theta\alpha_1$, and $1/p =(1-\theta )/p_0 +\theta/p_1$, then
\begin{equation}\label{complex interp for S}
L_{\alpha}^{p} (\Omega )  = \left[ L_{\alpha_0}^{p_0} (\Omega ) ,L_{\alpha_1}^{p_1} (\Omega ) \right]_{\theta}  .
\end{equation}

For a positive integer $k$, let $W^{k,p,q}(\Omega )$ be the Banach space of   all $f \in L^{p,q}(\Omega )$ such that $D^\gamma f \in L^{p,q}(\Omega  )$ for all multi-indices $\gamma$ with $|\gamma | \le k$. We write $W^{0,p,q}(\Omega)= L^{p,q}(\Omega)$.
If $1< p=q <\infty$,  we write    $W^{k,p}(\Omega ) = W^{k,p,p}(\Omega )$.
Then it is also  well known (see, e.g., \cite[(2.2)]{JK}) that if $k$ is any positive integer, then
\begin{equation}\label{L equals W}
L_k^p (\Omega )= W^{k,p}(\Omega )\quad\mbox{for}\,\,1 <p <\infty.
\end{equation}
As usual, $W^{k,p}(\Omega )$ is defined even for $p=1$ or $p=\infty$. By the universal extension theorem due to Stein \cite[Chapter VI]{stein}, there exists a   linear    operator $E_\Omega : L^{1}(\Omega ) \to L^{1}(\R^n )$ such that
\begin{enumerate}
\item[\textup{(1)}] $R_\Omega E_\Omega u = u$ for all $u\in L^{1}(\Omega)$;
\item[\textup{(2)}] $E_\Omega$ is bounded from $W^{k,p}(\Omega )$ into $W^{k,p}(\R^n )$ for all $k\in \N\cup \{0\}$ and $1 \le p \le \infty$.
\end{enumerate}


\begin{lem}\label{interp for SL on domains} Let $0 \le   \alpha  <\infty$, $1< p <\infty$, and $ 1\le q  \le \infty$.
\begin{enumerate}[{\upshape (i)}]
\item $E_\Omega$ is bounded from $L_\alpha^{p,q}(\Omega )$ into $L_\alpha^{p,q}(\R^n )$.
\item  If      $1< p_0 \neq p_1 < \infty$, $0<\theta < 1$, and $1/p=(1-\theta)/p_0 +\theta/p_1$,    then
    $$
    L_\alpha^{p,q}(\Omega ) = \left(\,  L_\alpha^{p_0}(\Omega ) , L_\alpha^{p_1}(\Omega ) \, \right)_{\theta,q}   .
    $$
\item     If $k$ is a positive integer, then $L_k^{p,q}(\Omega ) =W^{k,p,q}(\Omega )$.
\item  If   $q<\infty$, then  $C^\infty (\overline{\Omega}) $ is dense in $L_\alpha^{p,q}(\Omega )$, where $C^\infty (\overline{\Omega}) =R_\Omega \left(C_c^\infty (\R^n ) \right)$.
\end{enumerate}
\end{lem}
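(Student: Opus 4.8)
The plan is to establish the four parts of Lemma~\ref{interp for SL on domains} by bootstrapping from the whole-space results in Lemma~\ref{properties of SL on Rn} together with the Stein extension operator $E_\Omega$ and the restriction operator $R_\Omega$. The unifying idea is that $R_\Omega$ is always bounded on the relevant whole-space scale (by definition of the quotient norm), and once we show $E_\Omega$ is bounded on the $L_\alpha^{p,q}(\R^n)$-scale, then $R_\Omega E_\Omega = \mathrm{id}$ exhibits $L_\alpha^{p,q}(\Omega)$ as a retract of $L_\alpha^{p,q}(\R^n)$; retracts inherit interpolation identities, density, and the identification with $W^{k,p,q}$.

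First, for part (i), I would prove boundedness of $E_\Omega$ on $L_\alpha^{p,q}(\Omega)$ by real interpolation. Stein's operator is bounded $W^{k,p}(\Omega)\to W^{k,p}(\R^n)$ for every $k\in\N\cup\{0\}$ and $1\le p\le\infty$; picking $k>\alpha$ and two exponents $p_0\neq p_1$ with $2/p=1/p_0+1/p_1$, apply the real interpolation functor $(\cdot,\cdot)_{1/2,q}$. By Lemma~\ref{properties of SL on Rn}(iv), $W^{k,p_i}(\R^n)=L_k^{p_i}(\R^n)$, so by \eqref{interpolation for SL on Rn} the interpolated target is $L_k^{p,q}(\R^n)$, which embeds into $L_\alpha^{p,q}(\R^n)$ by Lemma~\ref{properties of SL on Rn}(i); on the domain side one argues identically that $(W^{k,p_0}(\Omega),W^{k,p_1}(\Omega))_{1/2,q}$ is a space containing $L_\alpha^{p,q}(\Omega)$ — more carefully, one first handles the integer case $L_k^{p,q}(\Omega)=W^{k,p,q}(\Omega)$ by interpolating \eqref{L equals W}, and then for general $\alpha$ uses that $L_k^{p,q}(\Omega)$ is dense in $L_\alpha^{p,q}(\Omega)$ with a uniform bound via $(I-\Delta)^{-\alpha/2}$ applied to a smooth truncation, exactly as in the proof of Lemma~\ref{properties of SL on Rn}(v). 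This gives $\|E_\Omega u\|_{L_\alpha^{p,q}(\R^n)}\lesssim\|u\|_{L_\alpha^{p,q}(\Omega)}$.

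Parts (ii), (iii), (iv) then follow formally. For (ii): since $R_\Omega:L_\alpha^{p_i}(\R^n)\to L_\alpha^{p_i}(\Omega)$ and $E_\Omega:L_\alpha^{p_i}(\Omega)\to L_\alpha^{p_i}(\R^n)$ are bounded with $R_\Omega E_\Omega=\mathrm{id}$, the standard retract lemma in interpolation theory together with \eqref{interpolation for SL on Rn} yields $(L_\alpha^{p_0}(\Omega),L_\alpha^{p_1}(\Omega))_{\theta,q}=L_\alpha^{p,q}(\Omega)$ with equivalent norms. For (iii): one inclusion $W^{k,p,q}(\Omega)\hookrightarrow L_k^{p,q}(\Omega)$ comes from $E_\Omega:W^{k,p,q}(\Omega)\to W^{k,p,q}(\R^n)=L_k^{p,q}(\R^n)$ (the integer whole-space identity, which itself follows by interpolating Lemma~\ref{properties of SL on Rn}(iv) in the third index or directly from \cite{BKO}) followed by $R_\Omega$; the reverse inclusion is immediate since differentiation commutes with restriction and $L_k^{p,q}(\R^n)=W^{k,p,q}(\R^n)$. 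For (iv): $C^\infty(\overline\Omega)=R_\Omega(C_c^\infty(\R^n))$ and by Lemma~\ref{properties of SL on Rn}(v) $C_c^\infty(\R^n)$ is dense in $L_\alpha^{p,q}(\R^n)$ when $q<\infty$; given $u\in L_\alpha^{p,q}(\Omega)$, apply $R_\Omega$ to a sequence approximating $E_\Omega u$ and use boundedness of $R_\Omega$.

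The main obstacle is making the integer-order identification $L_k^{p,q}(\Omega)=W^{k,p,q}(\Omega)$ genuinely rigorous in the Lorentz setting: the whole-space identity $L_k^{p,q}(\R^n)=W^{k,p,q}(\R^n)$ must be available (it is, via \cite[Theorem 5.5]{BKO}, or by real interpolation of \eqref{L equals W}), and then one needs that $E_\Omega$ respects the Lorentz-Sobolev scale, which is precisely part (i) — so the logical order matters and part (i) should be proved first, with (iii) for integer $k$ used as an intermediate step inside the proof of (i) for fractional $\alpha$. A secondary subtlety is confirming that the real interpolation space $(W^{k,p_0}(\Omega),W^{k,p_1}(\Omega))_{1/2,q}$ equals $W^{k,p,q}(\Omega)$ with equivalent norms on the domain; this again follows from the retract structure once $E_\Omega$ boundedness on each $W^{k,p_i}$ is invoked, so the argument is self-contained modulo careful ordering.
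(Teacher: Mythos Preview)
Your retract strategy is the right high-level idea, and parts (ii) and (iv) are handled essentially as in the paper. However, there are two genuine gaps.

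For part (i) at fractional $\alpha$, your descent from integer order $k$ does not work. Interpolating Stein's theorem in $p$ yields only
\[
\|E_\Omega u\|_{L_\alpha^{p,q}(\R^n)} \le \|E_\Omega u\|_{L_k^{p,q}(\R^n)} \lesssim \|u\|_{(W^{k,p_0}(\Omega),W^{k,p_1}(\Omega))_{1/2,q}},
\]
which controls $E_\Omega u$ by a norm of order $k$, not by $\|u\|_{L_\alpha^{p,q}(\Omega)}$. Density of $L_k^{p,q}(\Omega)$ in $L_\alpha^{p,q}(\Omega)$ cannot repair this: if $u_j\to u$ in $L_\alpha^{p,q}(\Omega)$ with $u_j\in L_k^{p,q}(\Omega)$, the norms $\|u_j\|_{L_k^{p,q}(\Omega)}$ are in general unbounded, so no limiting bound follows. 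Invoking ``$(I-\Delta)^{-\alpha/2}$ applied to a smooth truncation, exactly as in the proof of Lemma~\ref{properties of SL on Rn}(v)'' is not meaningful on a bounded domain, since that argument uses the global Bessel potential on $\R^n$. The paper avoids this by using \cite[Proposition~2.4]{JK}, which already gives $E_\Omega$ bounded from $L_\alpha^{p_i}(\Omega)$ into $L_\alpha^{p_i}(\R^n)$ for every $\alpha\ge 0$ and each fixed $p_i$ (proved there by \emph{complex} interpolation in $\alpha$ between $L^{p_i}$ and $W^{k,p_i}$, cf.\ \eqref{complex interp for S}); one then real-interpolates in $p$ to reach $L_\alpha^{p,q}$. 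That fractional-$\alpha$ input is the missing ingredient in your argument.

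For part (iii), your claim that $(W^{k,p_0}(\Omega),W^{k,p_1}(\Omega))_{\theta,q}=W^{k,p,q}(\Omega)$ ``follows from the retract structure'' is circular. The retract identifies this interpolation space with $R_\Omega\bigl((W^{k,p_0}(\R^n),W^{k,p_1}(\R^n))_{\theta,q}\bigr)=R_\Omega(L_k^{p,q}(\R^n))=L_k^{p,q}(\Omega)$, and boundedness of each $D^\gamma$ then gives $L_k^{p,q}(\Omega)\hookrightarrow W^{k,p,q}(\Omega)$. The reverse embedding $W^{k,p,q}(\Omega)\hookrightarrow L_k^{p,q}(\Omega)$ is equivalent to $E_\Omega:W^{k,p,q}(\Omega)\to W^{k,p,q}(\R^n)$ being bounded, which is precisely what you are assuming to prove it. The paper closes this circle by invoking the nontrivial theorem of DeVore--Scherer \cite[Theorem~2]{DS} (see also \cite[Lemma~2.6]{KO}), which establishes $(W^{k,p_0}(\Omega),W^{k,p_1}(\Omega))_{\theta,q}=W^{k,p,q}(\Omega)$ directly on Lipschitz domains; this is not a formality and cannot be absorbed into ``careful ordering''.
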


\begin{proof}
Suppose that  $1< p_0 \neq p_1 < \infty$, $0<\theta < 1$, and $1/p=(1-\theta)/p_0 +\theta/p_1$.

By definition,  the restriction operator  $R_\Omega$ is bounded from $L_\alpha^{p_i}(\R^n  )$ into  $L_\alpha^{p_i}(\Omega )$   for each $i=0,1$. Hence, by real interpolation,   it follows from (\ref{interpolation for SL on Rn}) that $R_\Omega$ is bounded from $ L_\alpha^{p,q}(\R^n )$ into $\left(\,  L_\alpha^{p_0}(\Omega ) , L_\alpha^{p_1}(\Omega ) \, \right)_{\theta,q}$. This implies that $ L_\alpha^{p,q}(\Omega  ) = R_\Omega  ( L_\alpha^{p,q}(\R^n ) ) \hookrightarrow \left(\,  L_\alpha^{p_0}(\Omega ) , L_\alpha^{p_1}(\Omega ) \, \right)_{\theta,q}$. On the other hand, by  \cite[Proposition 2.4]{JK},  $E_\Omega$ is bounded from $L_\alpha^{p_i}(\Omega )$ into $L_\alpha^{p_i}(\R^n  )$ for each $i=0,1$. Hence by real interpolation, $E_\Omega$ is bounded from $\left(\,  L_\alpha^{p_0}(\Omega ) , L_\alpha^{p_1}(\Omega ) \, \right)_{\theta,q}$ into $ L_\alpha^{p,q}(\R^n )$.
It is then easy to check that $\left(\,  L_\alpha^{p_0}(\Omega ) , L_\alpha^{p_1}(\Omega ) \, \right)_{\theta,q} \hookrightarrow    L_\alpha^{p,q}(\Omega  )$.  This proves  Part (i) as well  as  Part (ii).

To prove Part (iii), let $k$ be a positive integer. Then it follows from a nontrivial
interpolation result due to DeVore and Scherer   \cite[Theorem 2]{DS} (see also \cite[Lemma 2.6]{KO}) that
\[
\left(W^{k,p_0}(\Omega ), W^{k,p_1}(\Omega ) \right)_{\theta,q} = W^{k,p,q}(\Omega ).
\]
Hence by   Part (ii) and (\ref{L equals W}), we have
\[
L_k^{p,q}(\Omega ) = \left(W^{k,p_0}(\Omega ), W^{k,p_1}(\Omega ) \right)_{\theta,q} = W^{k,p,q}(\Omega ),
\]
which proves Part (iii).
Finally, Part  (iv) is easily deduced from  Lemma \ref{properties of SL on Rn}  (v).
\end{proof}

For $\alpha  \ge  0$, $1< p<\infty$, and $1 \le q \le \infty$,
let $\widetilde{L}_\alpha^{p,q} (\overline{\Omega} )$ be the  space of  all $f \in L_\alpha^{p,q}(\R^n )$ with support in $\overline{\Omega}$, which is a closed subspace of $L_\alpha^{p,q}(\R^n )$.   Then  $R_\Omega$ is an injection from $\widetilde{L}_\alpha^{p,q} (\overline{\Omega} )$ into $L_\alpha^{p,q}(\Omega  )$. Hence $R_\Omega  (\widetilde{L}_\alpha^{p,q} (\overline{\Omega} )  )$, the image of $\widetilde{L}_\alpha^{p,q} (\overline{\Omega} )$ under $R_\Omega$,  is a Banach space equipped with the norm
\[
\|R_\Omega f  \|_{R_\Omega  (\widetilde{L}_\alpha^{p,q} (\overline{\Omega} )  )} =  \|f\|_{L_\alpha^{p,q}(\R^n )} \quad \mbox{for all}\,\,  f\in \widetilde{L}_\alpha^{p,q} (\overline{\Omega} ) .
\]
For a function  $u$ on $\Omega$, we denote by $\widetilde{u}$ the zero extension of $u$ to $\R^n$, defined by
\[
\widetilde{u} (x)=
\begin{cases}
  u (x)    & \mbox{for}\,\,x \in \Omega  \\
 \,\,\, 0   & \mbox{for}\,\,x \in \R^n \setminus \Omega.
 \end{cases}
\]
Then since $\partial\Omega$ has measure zero, it immediately follows  that
$$
  R_\Omega  (\widetilde{L}_\alpha^{p,q} (\overline{\Omega} ) ) = \left\{ u \in L^{p,q}(\Omega ) \, :\, \widetilde{u} \in L_\alpha^{p,q}(\R^n ) \right\} \hookrightarrow L_\alpha^{p,q}(\Omega )
$$
and
\[
\|u\|_{R_\Omega  (\widetilde{L}_\alpha^{p,q} (\overline{\Omega} )  )} =  \|\widetilde{u}\|_{L_\alpha^{p,q}(\R^n )} \ge \|u\|_{L_\alpha^{p,q}(\Omega )} \quad\mbox{for all}\,\, u \in R_\Omega  (\widetilde{L}_\alpha^{p,q} (\overline{\Omega} ) ).
\]
Let    $\widetilde{L}_\alpha^{p,q}(\Omega )$ denote  the closure of $C_c^\infty (\Omega  )$ in $R_\Omega  (\widetilde{L}_\alpha^{p,q} (\overline{\Omega} )  )$.
Note  that
$$
R_\Omega  (\widetilde{L}_0^{p,q} (\overline{\Omega} ) ) = L^{p,q}(\Omega ),  \quad \widetilde{L}_0^{p,q}(\Omega ) =  \widetilde{L}^{p,q}(\Omega ),
\quad\mbox{and}\quad
\left[\widetilde{L}_0^{p,q}(\Omega )\right]^* = L^{p',q'}(\Omega ) .
$$
For $\alpha >0$, we define $L_{-\alpha}^{p,q}(\Omega )$ as the dual space of $\widetilde{L}_{\alpha}^{p',q'}(\Omega )$.
Finally, if $ 1< p=q  <\infty$,  we write    $\widetilde{L}_\alpha^{p}(\overline{\Omega} ) = \widetilde{L}_\alpha^{p,p}(\overline{\Omega} )$, $\widetilde{L}_\alpha^{p}(\Omega ) = \widetilde{L}_\alpha^{p,p}(\Omega )$,  and  $L_{-\alpha}^{p}(\Omega ) = L_{-\alpha}^{p,p}(\Omega )$.
Then it was shown in  \cite[Remark 2.7]{JK} and   \cite[p.329]{FMM} (see also \cite[Theorem 3.5]{triebel02}) that
$C_c^\infty (\Omega  )$ is dense in $R_\Omega  (\widetilde{L}_\alpha^{p} (\overline{\Omega} )  )$, that is, $\widetilde{L}_\alpha^{p}(\Omega ) = R_\Omega  (\widetilde{L}_\alpha^{p} (\overline{\Omega} )  ) $  and the Banach  spaces  $\widetilde{L}_\alpha^{p}(\Omega )$  and    $L_{-\alpha}^{p}(\Omega )$ are reflexive. Moreover it follows from   \cite[Proposition  2.11]{JK}  and Lemma \ref{density and duality in complex interp} that $\widetilde{L}_\alpha^{p}(\Omega )$  and    $L_{-\alpha}^{p}(\Omega )$ are complex interpolation scales for $\alpha \ge 0$ and $1<p<\infty$: if $0 \le     \alpha_0 , \alpha_1 < \infty$,  $1<    p_0 , p_1 < \infty$, $0<\theta < 1$, $\alpha =(1-\theta )\alpha_0 +\theta\alpha_1$, and $1/p =(1-\theta )/p_0 +\theta/p_1$, then
\begin{equation}\label{comp-interp-S on domains}
\widetilde{L}_{\alpha}^{p} (\Omega )  = \left[\widetilde{L}_{\alpha_0}^{p_0} (\Omega ) ,\widetilde{L}_{\alpha_1}^{p_1} (\Omega ) \right]_{\theta} \quad\mbox{and}\quad L_{-\alpha}^{p} (\Omega )  =   \left[ L_{-\alpha_0}^{p_0} (\Omega ) ,L_{-\alpha_1}^{p_1} (\Omega ) \right]_{\theta}.
\end{equation}


\begin{lem}\label{real-interp-S on domains} Let $0 \le   \alpha  <\infty$, $1< p  < \infty$,  and $ 1\le q  \le \infty$.
\begin{enumerate}[{\upshape (i)}]
\item  If      $1< p_0 \neq p_1 < \infty$, $0<\theta < 1$, and $1/p=(1-\theta)/p_0 +\theta/p_1$,    then
    \[
 R_\Omega  (\widetilde{L}_\alpha^{p,q} (\overline{\Omega} ) )  = \left( \widetilde{L}_\alpha^{p_0}(\Omega ) ,\widetilde{L}_\alpha^{p_1}(\Omega ) \right)_{\theta,q}
\quad\mbox{and}\quad
     L_{-\alpha}^{p,q} (\Omega )   = \left( L_{-\alpha}^{p_0} (\Omega ) ,L_{-\alpha}^{p_1} (\Omega ) \right)_{\theta,q}.
\]
\item If   $q<\infty$, then  $C_c^\infty (\Omega) $ is dense in $R_\Omega  (\widetilde{L}_\alpha^{p,q} (\overline{\Omega} ) )$, that is,   $\widetilde{L}_\alpha^{p,q}(\Omega )= R_\Omega  (\widetilde{L}_\alpha^{p,q} (\overline{\Omega} ) )$.
\item If  $k$ is  a positive integer, then $\widetilde{L}_k^{p,q}(\Omega ) $ is the closure of $C_c^\infty (\Omega) $ in $L_k^{p,q}(\Omega )$.
\item If $q>1$, then $C_c^\infty (\Omega )$ is dense in $L_{-\alpha}^{p',q'} (\Omega ) $ and $ \left[ L_{-\alpha}^{p', q'}(\Omega ) \right]^* = R_\Omega  (\widetilde{L}_\alpha^{p,q} (\overline{\Omega} ) )$.
\end{enumerate}
\end{lem}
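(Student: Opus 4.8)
The plan is to prove the four assertions in the order: first the identity for $R_\Omega(\widetilde L_\alpha^{p,q}(\overline\Omega))$ in (i), then (ii), then the identity for $L_{-\alpha}^{p,q}(\Omega)$ in (i), then (iii), then (iv) — this order being essentially forced, since the $L_{-\alpha}$ identity will be deduced from (ii) by duality while (ii) rests on the first identity. Fix $1<p_0\neq p_1<\infty$, $0<\theta<1$, and $1/p=(1-\theta)/p_0+\theta/p_1$. For the first identity of (i): since $p_0\neq p_1$ we have $\widetilde L_\alpha^{p_i}(\Omega)=R_\Omega(\widetilde L_\alpha^{p_i}(\overline\Omega))$ by the results of \cite{JK,FMM} cited above, and the zero-extension $u\mapsto\widetilde u$ is an isometric isomorphism from this couple onto the subcouple of $(L_\alpha^{p_0}(\R^n),L_\alpha^{p_1}(\R^n))$ consisting of the distributions supported in $\overline\Omega$; so it suffices to interpolate that subcouple inside $(L_\alpha^{p_0}(\R^n),L_\alpha^{p_1}(\R^n))$, whose real interpolation space is $L_\alpha^{p,q}(\R^n)$ by (\ref{interpolation for SL on Rn}). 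The key point is that the exterior domain $\R^n\setminus\overline\Omega$ is again Lipschitz, hence admits a universal extension operator $E$ bounded from $L_\alpha^{s}(\R^n\setminus\overline\Omega)$ into $L_\alpha^{s}(\R^n)$ for every $1<s<\infty$ (Stein \cite{stein} followed by complex interpolation, exactly as for $E_\Omega$ in Lemma~\ref{interp for SL on domains}), so that $P:=I-E\,R_{\R^n\setminus\overline\Omega}$ is, by the \emph{same formula} for $s=p_0$ and $s=p_1$, a bounded projection of $L_\alpha^s(\R^n)$ onto its subspace of functions supported in $\overline\Omega$, equal to the identity on that subspace. Thus the subcouple is a retract of $(L_\alpha^{p_0}(\R^n),L_\alpha^{p_1}(\R^n))$ (see, e.g., \cite{bergh}), so its real interpolation space is $P(L_\alpha^{p,q}(\R^n))$; as $P$ remains a bounded projection on $L_\alpha^{p,q}(\R^n)$ whose range is $\{f\in L_\alpha^{p,q}(\R^n):f\equiv0\ \text{on}\ \R^n\setminus\overline\Omega\}$, which — because $L_\alpha^{p,q}(\R^n)\hookrightarrow L^{p,q}(\R^n)$ by Lemma~\ref{properties of SL on Rn}(i) — is exactly $\widetilde L_\alpha^{p,q}(\overline\Omega)$, undoing the zero-extension isometry gives $R_\Omega(\widetilde L_\alpha^{p,q}(\overline\Omega))=(\widetilde L_\alpha^{p_0}(\Omega),\widetilde L_\alpha^{p_1}(\Omega))_{\theta,q}$.

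For (ii): $C_c^\infty(\Omega)$ is dense in each $\widetilde L_\alpha^{p_i}(\Omega)$ by definition, hence in their intersection (which, $\Omega$ being bounded, is simply the member with the larger integrability exponent); since $q<\infty$, Lemma~\ref{density and duality in real interp}(i) and the first identity then yield that $C_c^\infty(\Omega)$ is dense in $R_\Omega(\widetilde L_\alpha^{p,q}(\overline\Omega))$, i.e. $\widetilde L_\alpha^{p,q}(\Omega)=R_\Omega(\widetilde L_\alpha^{p,q}(\overline\Omega))$. For the $L_{-\alpha}$ identity, dualize: $L_{-\alpha}^{p_i}(\Omega)=[\widetilde L_\alpha^{p_i'}(\Omega)]^*$ and $C_c^\infty(\Omega)$ is dense in $\widetilde L_\alpha^{p_0'}(\Omega)\cap\widetilde L_\alpha^{p_1'}(\Omega)$, so Lemma~\ref{density and duality in real interp}(ii) (in its little-space form when $q=1$) identifies $(L_{-\alpha}^{p_0}(\Omega),L_{-\alpha}^{p_1}(\Omega))_{\theta,q}$ with the dual of $(\widetilde L_\alpha^{p_0'}(\Omega),\widetilde L_\alpha^{p_1'}(\Omega))_{\theta,q'}$; by the first identity and then (ii) applied with the exponents $p',q'$ (legitimate since $q'<\infty$ when $q>1$, and absorbed into the little-space description when $q=1$), this dual equals $[\widetilde L_\alpha^{p',q'}(\Omega)]^*=L_{-\alpha}^{p,q}(\Omega)$.

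Part (iii) is independent and elementary: for $u\in C_c^\infty(\Omega)$ we have $\widetilde u\in C_c^\infty(\R^n)$ and $D^\gamma\widetilde u=\widetilde{D^\gamma u}$, so $\|u\|_{R_\Omega(\widetilde L_k^{p,q}(\overline\Omega))}=\|\widetilde u\|_{L_k^{p,q}(\R^n)}\approx\|\widetilde u\|_{W^{k,p,q}(\R^n)}=\|u\|_{W^{k,p,q}(\Omega)}\approx\|u\|_{L_k^{p,q}(\Omega)}$ by Lemma~\ref{properties of SL on Rn}(iv) and Lemma~\ref{interp for SL on domains}(iii); since $R_\Omega(\widetilde L_k^{p,q}(\overline\Omega))\hookrightarrow L_k^{p,q}(\Omega)$ and these norms are equivalent on $C_c^\infty(\Omega)$, the closures of $C_c^\infty(\Omega)$ in the two spaces agree, which is $\widetilde L_k^{p,q}(\Omega)$. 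For (iv), let $q>1$. First, $C_c^\infty(\Omega)$ is dense in each $L_{-\alpha}^{p_i'}(\Omega)=[\widetilde L_\alpha^{p_i}(\Omega)]^*$ by an annihilator argument (an element of the dual $R_\Omega(\widetilde L_\alpha^{p_i}(\overline\Omega))$ killing $C_c^\infty(\Omega)$ has zero extension in $L^{p_i}(\R^n)$ vanishing on $\Omega$, hence is $0$), so by the $L_{-\alpha}$ identity of (i) and Lemma~\ref{density and duality in real interp}(i) (with $q'<\infty$) $C_c^\infty(\Omega)$ is dense in $L_{-\alpha}^{p',q'}(\Omega)=(L_{-\alpha}^{p_0'}(\Omega),L_{-\alpha}^{p_1'}(\Omega))_{\theta,q'}$. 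Finally, each $L_{-\alpha}^{p_i'}(\Omega)$ is reflexive with dual $R_\Omega(\widetilde L_\alpha^{p_i}(\overline\Omega))$, so dualizing this representation via Lemma~\ref{density and duality in real interp}(ii) gives $[L_{-\alpha}^{p',q'}(\Omega)]^*=(R_\Omega(\widetilde L_\alpha^{p_0}(\overline\Omega)),R_\Omega(\widetilde L_\alpha^{p_1}(\overline\Omega)))_{\theta,q}=R_\Omega(\widetilde L_\alpha^{p,q}(\overline\Omega))$ by the first identity of (i).

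The main obstacle is the first identity of (i); the rest is interpolation bookkeeping. It hinges on producing the bounded projection $P$ onto the support-constrained subspace \emph{uniformly over the two Lebesgue exponents} — which is exactly where the regularity of $\Omega$ enters, through a universal extension operator for $\R^n\setminus\overline\Omega$ acting on Bessel-potential spaces of \emph{all} orders $\alpha\ge0$ — and on verifying that $P$ survives interpolation with range still exactly $\widetilde L_\alpha^{p,q}(\overline\Omega)$, notably for the endpoint $q=\infty$. A secondary nuisance is the circular-looking dependence among the first identity, (ii), and the second identity, which pins down the order above.
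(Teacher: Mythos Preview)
Your proof is correct and follows essentially the same route as the paper's: the projection $P = I - E_{\R^n\setminus\overline\Omega}\, R_{\R^n\setminus\overline\Omega}$ is exactly the device the paper uses (following Mayboroda \cite{Mayb05}) to realize $\widetilde L_\alpha^{p,q}(\overline\Omega)$ as a retract of $L_\alpha^{p,q}(\R^n)$, and the subsequent density and duality bookkeeping via Lemma~\ref{density and duality in real interp} is the same. The one small divergence is in Part~(iv), where you establish density of $C_c^\infty(\Omega)$ in $L_{-\alpha}^{p_i'}(\Omega)$ by a Hahn--Banach annihilator argument, whereas the paper invokes the complex interpolation identity $L_{-\alpha}^{p_0'}(\Omega) = [L^{p_0'}(\Omega), L_{-k}^{p_0'}(\Omega)]_{\alpha/k}$ from (\ref{comp-interp-S on domains}) together with Lemma~\ref{density and duality in complex interp}(i); both arguments are valid.
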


\begin{proof}
Suppose that    $1< p_0 \neq p_1 < \infty$, $0<\theta < 1$, and $1/p=(1-\theta)/p_0 +\theta/p_1$.

We first prove Part (i) by following Mayboroda \cite{Mayb05}. Consider the operator
\[
P  = I  - E_{\R^n \setminus \overline{\Omega}}  \, R_{\R^n \setminus \overline{\Omega}} : L_\alpha^{p_0}(\R^n )+ L_\alpha^{p_1}(\R^n ) \rightarrow L_\alpha^{p_0}(\R^n )+ L_\alpha^{p_1}(\R^n ),
\]
where $R_{\R^n \setminus \overline{\Omega}}$ is the restriction operator to $\R^n \setminus \overline{\Omega}$  and $E_{\R^n \setminus \overline{\Omega}}$ is   Stein's universal extension operator.
Then $P$ is a projection (that is, a bounded linear operator with $P^2 =P$) on $L_\alpha^{p_0}(\R^n )$, $L_\alpha^{p_1}(\R^n )$, and $L_\alpha^{p,q}(\R^n )$, respectively.
It is easy to show that
$$
\widetilde{L}_\alpha^{p,q} (\overline{\Omega} )     = P (L_\alpha^{p,q}(\R^n ) ).
$$
Hence by (\ref{interpolation for SL on Rn}) and \cite[Lemma 3.1.9]{Mayb05},
\begin{align*}
\widetilde{L}_\alpha^{p,q} (\overline{\Omega} )     = P (L_\alpha^{p,q}(\R^n ) ) & = P  (\left( L_\alpha^{p_0 }(\R^n ), L_\alpha^{p_1}(\R^n ) \right)_{\theta,q}  ) \\
&=  \left( P  ( L_\alpha^{p_0 }(\R^n )  ), P  ( L_\alpha^{p_1}(\R^n )  ) \right)_{\theta,q} \\
& =\left( \widetilde{L}_\alpha^{p_0}(\overline{\Omega} )   ,\widetilde{L}_\alpha^{p_1}(\overline{\Omega} )   \right)_{\theta,q}.
\end{align*}
Moreover, since $R_\Omega  (\widetilde{L}_\alpha^{p_i } (\overline{\Omega} ) ) = \widetilde{L}_\alpha^{p_i}(\Omega )$ for $i=0,1$, we have
\[
R_\Omega  (\widetilde{L}_\alpha^{p,q} (\overline{\Omega} ) )
 = R_\Omega \left( \widetilde{L}_\alpha^{p_0}(\overline{\Omega} )   ,\widetilde{L}_\alpha^{p_1}(\overline{\Omega} )   \right)_{\theta,q}
  \hookrightarrow \left( \widetilde{L}_\alpha^{p_0}(\Omega )  ,\widetilde{L}_\alpha^{p_1}(\Omega ) \right)_{\theta,q}.
\]
To prove the reverse embedding, let  $E^0_\Omega$ denote the zero extension operator from $\Omega$ to $\R^n$, defined by $E^0_\Omega u = \widetilde{u}$. It immediately follows from  (\ref{interpolation for SL on Rn}) that 
$$
E_\Omega^0  \left( \widetilde{L}_\alpha^{p_0}(\Omega )  ,\widetilde{L}_\alpha^{p_1}(\Omega ) \right)_{\theta,q} \hookrightarrow  \left( L_\alpha^{p_0 } (\R^n ) )  , L_\alpha^{p_1 } ( \R^n )   \right)_{\theta,q} = L_\alpha^{p,q }(\R^n).
$$
Hence  using the fact that $R_\Omega\circ P\circ E^0_\Omega=I$, we have  
$$
\begin{aligned}
 \left( \widetilde{L}_\alpha^{p_0}(\Omega )  ,\widetilde{L}_\alpha^{p_1}(\Omega ) \right)_{\theta,q}
&= \left( R_\Omega \circ P \circ E^0_\Omega \right) \left( \widetilde{L}_\alpha^{p_0}(\Omega )  ,\widetilde{L}_\alpha^{p_1}(\Omega ) \right)_{\theta,q} \\
& \hookrightarrow  ( R_\Omega \circ P) (  L_\alpha^{p,q }(\R^n) ) = R_\Omega  (\widetilde{L}_\alpha^{p,q} (\overline{\Omega} ) ).
\end{aligned}
$$
which proves the first interpolation result in Part (i), that is,  
\[
R_\Omega  (\widetilde{L}_\alpha^{p,q} (\overline{\Omega} ) )  = \left( \widetilde{L}_\alpha^{p_0}(\Omega ) ,\widetilde{L}_\alpha^{p_1}(\Omega ) \right)_{\theta,q}.
\]
Recall  that   $\widetilde{L}_\alpha^{p,q}(\Omega )$ is  the closure of $C_c^\infty (\Omega  )$ in $R_\Omega  (\widetilde{L}_\alpha^{p,q} (\overline{\Omega} )  )$. Hence  $\widetilde{L}_\alpha^{p,q}(\Omega )$ is  the closure of $C_c^\infty (\Omega  )$ in $\left( \widetilde{L}_\alpha^{p_0}(\Omega ) ,\widetilde{L}_\alpha^{p_1}(\Omega ) \right)_{\theta,q}$. Therefore, by the duality theorem (Lemma \ref{density and duality in real interp} (ii)), we have
\[
L_{-\alpha}^{p',q'} (\Omega )   = \left[\widetilde{L}_\alpha^{p,q}(\Omega )\right]^* =  \left( L_{-\alpha}^{p_0 '} (\Omega ) ,L_{-\alpha}^{p_1 '} (\Omega ) \right)_{\theta,q'},
\]
which proves Part (i).   Part (ii) immediately follows from Part (i) and Lemma \ref{density and duality in real interp} (i).

To prove Part (iii), let $k$ be a positive integer. Since $R_\Omega  (\widetilde{L}_k^{p,q} (\overline{\Omega} ) ) \hookrightarrow L_k^{p,q}(\Omega )$, it is obvious that $\widetilde{L}_k^{p,q}(\Omega ) $ is a subset of  the closure of $C_c^\infty (\Omega) $ in $L_k^{p,q}(\Omega )$.
Suppose that $u$ belongs to  the closure of $C_c^\infty (\Omega) $ in $L_k^{p,q}(\Omega )$. Then there exists a sequence $\{u_j\}$ in $C_c^\infty (\Omega )$ such that $u_j \to u$ in $L_k^{p,q}(\Omega )$.
By Lemma \ref{properties of SL on Rn} (iv) and Lemma \ref{interp for SL on domains} (iii), we have
\begin{align*}
\| u_j -u_m  \|_{R_\Omega  (\widetilde{L}_k^{p,q} (\overline{\Omega} )  )}  &=  \|\widetilde{u}_j - \widetilde{u}_m \|_{L_k^{p,q}(\R^n )} \\
 & \approx  \|\widetilde{u}_j - \widetilde{u}_m  \|_{W^{k, p,q}(\R^n )} \\
 &= \| {u}_j -u_m  \|_{W^{k, p,q}(\Omega )}\approx \| {u}_j -u_m  \|_{L_k^{p,q}(\Omega )}
\end{align*}
for all $j,m \ge 1$. Hence  $ \{u_j\} $ converges in $R_\Omega  (\widetilde{L}_k^{p,q} (\overline{\Omega} )  )$ to some $v$. Since  $ u_j \to u $ and $u_j \to v$  in $L_k^{p,q}(\Omega )$, we deduce that
$u = v \in \widetilde{L}_k^{p,q}(\Omega )$, which proves Part (iii).

It remains to prove Part (iv).
Suppose that $1<q \le \infty$. We may assume that $0<\alpha < k$ for some  $k \in \N$.  Then by (\ref{comp-interp-S on domains}), we have
\[
L_{-\alpha}^{p_0 '} (\Omega )  =   \left[ L^{p_0'} (\Omega ) ,L_{-k}^{p_0 '} (\Omega ) \right]_{\alpha /k} .
\]
Since $C_c^\infty (\Omega )$ is dense in $L^{p_0'} (\Omega ) = L^{p_0'} (\Omega ) \cap L_{-k}^{p_0 '} (\Omega )$, it follows from Lemma \ref{density and duality in complex interp} (i) that
$C_c^\infty (\Omega )$ is dense in
 $L_{-\alpha}^{p_0 '} (\Omega )$. Similarly, $C_c^\infty (\Omega )$ is dense in  $   L_{-\alpha}^{p_1 '} (\Omega )$.
Since $\Omega$ is bounded, $C_c^\infty (\Omega )$ is also dense in  $ L_{-\alpha}^{p_0 '} (\Omega )\cap   L_{-\alpha}^{p_1 '} (\Omega )$.   Recall that $\widetilde{L}_\alpha^{p_i}(\Omega )$ is reflexive and its dual is    $L_{-\alpha}^{p_i'}(\Omega )$ for $i=0,1$.
Hence it follows from Part (i)  and   Lemma \ref{density and duality in real interp} that $C_c^\infty (\Omega )$ is dense in $L_{-\alpha}^{p',q'} (\Omega ) $ and
\[
 \left[ L_{-\alpha}^{p', q'}(\Omega ) \right]^* =   \left(\,    \widetilde{L}_{\alpha}^{p_0 }(\Omega )   , \ \widetilde{L}_{\alpha}^{p_1}(\Omega )   \, \right)_{\theta,q} = R_\Omega  (\widetilde{L}_\alpha^{p,q} (\overline{\Omega} ) ) .
\]
This completes the proof of the lemma.
\end{proof}

\subsection{Embedding  results}

We next prove several embedding results for Sobolev-Lorentz spaces $L_{\alpha}^{p,q} (\Omega )$.

\begin{lem}\label{properties of SL on domains} Let $-\infty < \alpha, \alpha_0 <\infty$, $1< p, p_0 <\infty$, and $ 1\le q ,q_0 \le \infty$.
\begin{enumerate}[{\upshape (i)}]
\item  If $\alpha  \ge \alpha_0$  and $q \le q_0 $, then   $L_\alpha^{p,q}(\Omega ) \hookrightarrow L_{\alpha_0}^{p,q_0 }(\Omega ) $. In particular, if $\alpha > 0$, then $L_\alpha^{p,q}(\Omega ) \hookrightarrow L^{p,q  }(\Omega )\hookrightarrow L_{-\alpha}^{p,q}(\Omega )$.
\item  If  $p >  p_0$,  then    $L_\alpha^{p,q}(\Omega ) \hookrightarrow L_{\alpha}^{p_0,q_0 }(\Omega ) $.
\item    If $\alpha-\alpha_0 = n/p-n/p_0 >0$, then $L_\alpha^{p,q}(\Omega )\hookrightarrow L_{\alpha_0}^{p_0 , q}(\Omega )$. In particular,  if $0<  \alpha p < n $, then $L_\alpha^{p,q}(\Omega)\hookrightarrow L^{np/(n-\alpha p),q}(\Omega)$.
\item     If $\alpha p=n$, then $L_\alpha^{p,1}(\Omega)\hookrightarrow C  (\overline{\Omega} )$ and $\widetilde{L}_\alpha^{p,1}(\Omega)\hookrightarrow C_0  (\Omega )$, where $C_0 (\Omega )$ is the closure of $C_c^\infty (\Omega )$ in $L^\infty (\Omega)$
\end{enumerate}
\end{lem}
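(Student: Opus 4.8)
The plan is to transfer each embedding from the already-established whole-space analogue in Lemma \ref{properties of SL on Rn} via the bounded extension operator $E_\Omega$ (Lemma \ref{interp for SL on domains} (i)) and the trivially bounded restriction operator $R_\Omega$. For Part (i), given $u \in L_\alpha^{p,q}(\Omega)$ with $\alpha \ge \alpha_0$ and $q \le q_0$, apply $E_\Omega$ to get $E_\Omega u \in L_\alpha^{p,q}(\R^n)$, invoke Lemma \ref{properties of SL on Rn} (i) to land in $L_{\alpha_0}^{p,q_0}(\R^n)$, then restrict back; the quotient-norm definition of $L_{\alpha_0}^{p,q_0}(\Omega)$ gives the estimate. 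The particular case follows by taking $\alpha_0 = 0$ and then $\alpha_0 = -\alpha$, noting $\alpha > 0$ so $q \le \infty$ is automatic and using that $L^{p,q}(\Omega) \hookrightarrow L_{-\alpha}^{p,q}(\Omega)$ means precisely the dual embedding $\widetilde L_\alpha^{p',q'}(\Omega) \hookrightarrow \widetilde L^{p',q'}(\Omega)$, which is again Part (i) applied with primed exponents. Parts (iii) and (iv) are handled the same way: Part (iii) from Lemma \ref{properties of SL on Rn} (ii) with $\alpha - \alpha_0 = n/p - n/p_0$, and Part (iv) from Lemma \ref{properties of SL on Rn} (iii), where for the $\widetilde L_\alpha^{p,1}(\Omega) \hookrightarrow C_0(\Omega)$ assertion one uses that the zero extension of $u \in \widetilde L_\alpha^{p,1}(\Omega)$ lies in $L_\alpha^{p,1}(\R^n) \hookrightarrow C_0(\R^n)$ by Lemma \ref{properties of SL on Rn} (iii), and a function in $C_0(\R^n)$ that vanishes off $\overline\Omega$ restricts to an element of $C_0(\Omega)$ by approximation with $C_c^\infty(\Omega)$.

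The one part that does not follow from the whole-space lemma is Part (ii): the inclusion $L^{p,q} \hookrightarrow L^{p_0,q_0}$ for $p > p_0$ is false on $\R^n$ but true on the bounded domain $\Omega$, so it genuinely uses finiteness of $|\Omega|$. I expect this to be the main point requiring actual work. My plan is to first reduce to the $\alpha = 0$ case: if I know $L^{p,q}(\Omega) \hookrightarrow L^{p_0,q_0}(\Omega)$ whenever $p > p_0$, then for general $\alpha$ I write $u \in L_\alpha^{p,q}(\Omega)$, extend to $f = E_\Omega u \in L_\alpha^{p,q}(\R^n)$, pick exponents $p_1 < p_0 < p$ on a line of fixed smoothness and use Lemma \ref{interp for SL on domains} (ii) together with the $\alpha=0$ embedding to interpolate — or, more cleanly, observe that $(I-\Delta)^{\alpha/2}$ is an isometry $L_\alpha^{p,q}(\R^n) \to L^{p,q}(\R^n)$, so after extending, the statement on $\R^n$ becomes "$g \in L^{p,q}$, want $(I-\Delta)^{(\alpha-\alpha)/2}g$-type control"; this does not localize well, so I will instead argue directly on $\Omega$ after extension and restriction using Lemma \ref{interp for SL on domains} (ii) to realize $L_\alpha^{p,q}(\Omega)$ as a real-interpolation space of $L_\alpha^{p_i}(\Omega)$'s and reduce to the Lebesgue-exponent endpoints $L_\alpha^{p_i}(\Omega) = L_{\alpha}^{p_i,p_i}(\Omega)$.

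For the $\alpha = 0$ core of Part (ii): the embedding $L^{p,q}(\Omega) \hookrightarrow L^{p_0,q_0}(\Omega)$ for $p > p_0$ on a finite measure space is standard (for instance, combine $L^{p,q}(\Omega)\hookrightarrow L^{p,\infty}(\Omega)\hookrightarrow L^{p_0,\infty}(\Omega)$ — the last step by $\|\cdot\|_{L^{p_0,\infty}} \lesssim |\Omega|^{1/p_0-1/p}\|\cdot\|_{L^{p,\infty}}$, a direct estimate on the distribution function — and then $L^{p_0,\infty}(\Omega)\hookrightarrow L^{p_1}(\Omega)$ for any $p_1 < p_0$, followed by interpolation between two such $L^{p_1}(\Omega)$ with $q_0$ as third index via \eqref{interpolation for L}). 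For the Sobolev-order version, with $f = E_\Omega u$ fixed, choose $p_1$ with $p_0 < p_1 < p$; by Lemma \ref{interp for SL on domains} (ii), $u \in L_\alpha^{p,q}(\Omega) = (L_\alpha^{p_1/2'}(\Omega), L_\alpha^{p_1}(\Omega))_{\theta,q}$ for suitable parameters, and each $L_\alpha^{p_i}(\Omega) = W^{?}$... — rather than chase this, the cleanest route is: $L_\alpha^{p,q}(\Omega) \hookrightarrow L_\alpha^{p,1}(\Omega)$ only if $q$... no; instead I will simply note $L_\alpha^{p,q}(\Omega) \hookrightarrow L_{\alpha}^{p_0,q_0}(\Omega)$ is equivalent, after applying $E_\Omega$ and $R_\Omega$, to showing $R_\Omega : L_\alpha^{p,q}(\R^n) \to L_\alpha^{p_0,q_0}(\Omega)$ is bounded, and this I obtain by factoring through a large ball $B \supset \overline\Omega$: multiply by a fixed cutoff $\chi \in C_c^\infty(\R^n)$, $\chi \equiv 1$ on $\overline\Omega$; multiplication by $\chi$ is bounded on $L_\alpha^{p,q}(\R^n)$, and $\chi f$ has compact support, so the finite-measure Lebesgue-embedding argument above applies after passing through the smoothing. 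I anticipate the bookkeeping around which auxiliary exponents to pick is the only delicate part; once the $\alpha=0$, finite-measure embedding is in hand, everything else is routine functoriality of $E_\Omega$ and $R_\Omega$.
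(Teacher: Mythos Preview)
Your transfer-via-$E_\Omega$/$R_\Omega$ strategy is sound for the parts of (i), (iii), (iv) where all orders are nonnegative, but it does not cover the negative-index cases. For $\alpha_0<0$ the space $L_{\alpha_0}^{p,q_0}(\Omega)$ is defined as the dual of $\widetilde L_{-\alpha_0}^{p',q_0'}(\Omega)$, not as a restriction space, so there is no ``quotient-norm definition'' to restrict back into; and $E_\Omega$ is only shown bounded on $L_\alpha^{p,q}(\Omega)$ for $\alpha\ge 0$ (Lemma~\ref{interp for SL on domains}\,(i)). You correctly invoke duality for the particular case $L^{p,q}(\Omega)\hookrightarrow L_{-\alpha}^{p,q}(\Omega)$, but the general statements of (i) and (iii) allow arbitrary real $\alpha,\alpha_0$, and those cases need separate arguments. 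The paper splits each into three subcases: $\alpha_0\ge 0$ (your argument); $\alpha\ge 0>\alpha_0$, factoring through $L^{p,q}(\Omega)$; and $\alpha<0$, where both sides are dual spaces and one dualizes the corresponding $\widetilde L$-embedding $\widetilde L_{-\alpha_0}^{p',q_0'}(\Omega)\hookrightarrow\widetilde L_{-\alpha}^{p',q'}(\Omega)$.

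For Part (ii) your sketch never converges. The cutoff idea requires boundedness of multiplication by $\chi$ on $L_\alpha^{p,q}(\R^n)$, which you have not justified, and the interpolation fragments do not assemble into an argument. The paper's route is different and clean. First, for $\alpha>0$ and $p>p_0$, establish $L_\alpha^{p}(\Omega)\hookrightarrow L_\alpha^{p_0}(\Omega)$ by picking an integer $k>\alpha$, using the trivial bounded-domain embedding $W^{k,p}(\Omega)\hookrightarrow W^{k,p_0}(\Omega)$, and complex-interpolating via \eqref{complex interp for S}, namely $L_\alpha^p(\Omega)=[L^p(\Omega),L_k^p(\Omega)]_{\alpha/k}$; the same argument with \eqref{comp-interp-S on domains} gives the $\widetilde L$-version. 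Second, to pass to arbitrary secondary indices $q,q_0$, choose $p_1<p_0<p_2<p<p_3$ with $2/p_0=1/p_1+1/p_2$ and $2/p=1/p_2+1/p_3$, and use the sandwich
\[
L_\alpha^{p,q}(\Omega)=\bigl(L_\alpha^{p_2}(\Omega),L_\alpha^{p_3}(\Omega)\bigr)_{1/2,q}\hookrightarrow L_\alpha^{p_2}(\Omega)\hookrightarrow \bigl(L_\alpha^{p_1}(\Omega),L_\alpha^{p_2}(\Omega)\bigr)_{1/2,q_0}=L_\alpha^{p_0,q_0}(\Omega),
\]
together with Lemma~\ref{interp for SL on domains}\,(ii). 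Finally, for $\alpha<0$ one dualizes the $\widetilde L$-embedding $\widetilde L_{-\alpha}^{p_0',q_0'}(\Omega)\hookrightarrow\widetilde L_{-\alpha}^{p',q'}(\Omega)$ just proved. This last step is also why the paper carries the $\widetilde L$-version alongside the $L$-version throughout Part (ii), something your outline omits.
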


\begin{proof} To prove Part (i), suppose that $\alpha  \ge \alpha_0$  and $q \le q_0 $. If $\alpha_0 \ge 0$, then the embeddings   $ L_\alpha^{p,q}(\Omega )  \hookrightarrow    L_{\alpha_0}^{p,q_0 }(\Omega )\hookrightarrow    L^{p,q_0 }(\Omega )$  immediately follow    from Lemma \ref{interp for SL on domains} (i) and Lemma \ref{properties of SL on Rn} (i).
Suppose that $\alpha_0 < 0$. Then since $-\alpha_0 >  0$ and $q_0 '\le q'$, it follows that  $ L_{-\alpha_0}^{p',q_0'}(\R^n )  \hookrightarrow    L^{p',q '}(\R^n ) $. Using this embedding, we easily deduce that
$$
\widetilde{L}_{-\alpha_0}^{p',q_0 '}(\Omega )     \hookrightarrow  \widetilde{L}^{p',q'}(\Omega ).
$$
Hence by the duality result  (\ref{widehat-L}),
\[
L^{p,q}(\Omega )  = \left[ \widetilde{L}^{p',q'}(\Omega ) \right]^* \hookrightarrow \left[ \widetilde{L}_{-\alpha_0}^{p',q_0 '}(\Omega ) \right]^* = L_{\alpha_0}^{p,q_0 }(\Omega )   .
\]
Therefore, if $\alpha \ge 0 > \alpha_0$, then
$L_{\alpha}^{p,q  }(\Omega ) \hookrightarrow  L^{p,q}(\Omega )   \hookrightarrow L_{\alpha_0}^{p,q_0 }(\Omega )$.
Suppose finally that $\alpha < 0$.  Then since $-\alpha_0 \ge -\alpha > 0$ and $q_0 '\le q'$, we have
$$
L_{-\alpha_0}^{p',q_0'}(\R^n )  \hookrightarrow    L_{-\alpha}^{p',q '}(\R^n )
\quad\mbox{and so}\quad
\widetilde{L}_{-\alpha_0}^{p',q_0 '}(\Omega )  \hookrightarrow  \widetilde{L}_{-\alpha}^{p',q  '}(\Omega )   .
$$
Therefore,
\[
L_\alpha^{p,q}(\Omega )  = \left[ \widetilde{L}_{-\alpha}^{p',q'}(\Omega ) \right]^* \hookrightarrow \left[ \widetilde{L}_{-\alpha_0}^{p',q_0 '}(\Omega ) \right]^* = L_{\alpha_0}^{p,q_0 }(\Omega )  .
\]
This proves Part (i).

To prove Part (ii), suppose that $p>p_0$.  The case $\alpha =0$ is trivial. Suppose that $\alpha >0$. If $k$ is a positive  integer, then $L_k^{p,p}(\Omega ) =W^{k,p}(\Omega )  \hookrightarrow  W^{k,p_0}(\Omega )= L_k^{p_0 ,p_0}(\Omega )$.
Moreover, it follows from Lemma \ref{real-interp-S on domains} (iii) that $\widetilde{L}_k^{p,p}(\Omega ) \hookrightarrow \widetilde{L}_k^{p_0 ,p_0}(\Omega )$.
Hence, if $k $ is any integer such that   $k >  \alpha $, then by   (\ref{complex interp for S}) and (\ref{comp-interp-S on domains}), we   have
$$
 L_\alpha^{p,p} (\Omega )  = \left[ L^p (\Omega ), L_k^{p}(\Omega )\right]_{\alpha /k} \hookrightarrow \left[ L^{p_0} (\Omega ), L_k^{p_0}(\Omega )\right]_{\alpha /k}=L_\alpha^{p_0 , p_0} (\Omega )
$$
and similarly
$$
\widetilde{L}_\alpha^{p,p}(\Omega ) \hookrightarrow \widetilde{L}_\alpha^{p_0 ,p_0}(\Omega ).
$$
For the general case when $q \neq p$ or $q_0 \neq p_0$, we choose $1<p_1 , p_2 , p_3 <\infty$ such that  $p_1 < p_0 < p_2 < p <p_3$, $2/p_0 =1/p_1 +1/p_2$, and $2/p =1/p_2 +1/p_3$.
Then since $L_\alpha^{p_3}(\Omega )  \hookrightarrow  L_\alpha^{p_2}(\Omega )\hookrightarrow  L_\alpha^{p_1}(\Omega )$, it follows from  Lemma \ref{interp for SL on domains} (ii) that
\[
L_\alpha^{p,q}(\Omega ) = \left(\,  L_\alpha^{p_2}(\Omega ) , L_\alpha^{p_3}(\Omega ) \, \right)_{1/2,q}  \hookrightarrow  L_\alpha^{p_2}(\Omega ) \hookrightarrow   \left(\,  L_\alpha^{p_1}(\Omega ) , L_\alpha^{p_2}(\Omega ) \, \right)_{1/2,q_0} = L_\alpha^{p_0 ,q_0 }(\Omega ).
\]
Similarly, since $\widetilde{L}_\alpha^{p_3}(\Omega )  \hookrightarrow  \widetilde{L}_\alpha^{p_2}(\Omega )\hookrightarrow  \widetilde{L}_\alpha^{p_1}(\Omega )$, it follows from  Lemma \ref{real-interp-S on domains} (i) that
\[
R_\Omega  (\widetilde{L}_\alpha^{p,q} (\overline{\Omega} ) ) \hookrightarrow  R_\Omega  (\widetilde{L}_\alpha^{p_0 ,q_0} (\overline{\Omega} ) )
\]
and hence
\[
\widetilde{L}_\alpha^{p,q}(\Omega )   \hookrightarrow \widetilde{L}_\alpha^{p_0 ,q_0 }(\Omega ).
\]

Suppose next that  $\alpha < 0$. Then since $\widetilde{L}_{-\alpha}^{p_0 ',q_0 '}(\Omega )   \hookrightarrow \widetilde{L}_{-\alpha}^{p' ,q ' }(\Omega )$ and $\widetilde{L}_{-\alpha}^{p_0 ',q_0 '}(\Omega ) $ is dense in $ \widetilde{L}_{-\alpha}^{p' ,q ' }(\Omega )$, we immediately obtain
\[
L_\alpha^{p,q}(\Omega )  = \left[ \widetilde{L}_{-\alpha}^{p',q'}(\Omega ) \right]^* \hookrightarrow \left[ \widetilde{L}_{-\alpha}^{p_0 ',q_0 '}(\Omega ) \right]^* = L_{\alpha}^{p_0 ,q_0 }(\Omega )  ,
\]
which proves Part (ii).

To prove Part (iii), suppose  that $\alpha-\alpha_0 = n/p-n/p_0 >0$. If $\alpha_0 \ge 0$, then the embeddings   $ L_\alpha^{p,q}(\Omega )  \hookrightarrow    L_{\alpha_0}^{p_0,q }(\Omega )
$  immediately follow   from Lemma \ref{interp for SL on domains} (i) and Lemma \ref{properties of SL on Rn} (ii). Suppose   that $\alpha \le 0$. Then since $(-\alpha_0) -( -\alpha) = n/p_0' - n/p' > 0$, we have
 $$
L_{-\alpha_0}^{p_0',q'}(\R^n )  \hookrightarrow    L_{-\alpha}^{p',q'}(\R^n ) ,
$$
from which we easily deduce  that
\[
\widetilde{L}_{-\alpha_0}^{p_0',q'}(\Omega )  \hookrightarrow  \widetilde{L}_{-\alpha}^{p',q  '}(\Omega )
\quad\text{and}\quad
L_\alpha^{p,q}(\Omega )
 \hookrightarrow L_{\alpha_0}^{p_0,q }(\Omega )   .
\]
If $\alpha>0 > \alpha_0$, then choosing $p_1$ such that $\alpha=n/p-n/p_1$,  we have
$$
L_\alpha^{p,q}(\Omega )
 \hookrightarrow L^{p_1,q }(\Omega )
 \hookrightarrow L_{\alpha_0}^{p_0,q }(\Omega ) .
$$
This proves Part (iii).
Finally, Part (iv) is easily deduced from  Lemma \ref{interp for SL on domains} (i)  and Lemma \ref{properties of SL on Rn} (iii).
\end{proof}


We will  also need   embedding results for Sobolev-Lorentz spaces into classical Besov spaces. For $-\infty < \alpha <\infty$, $1<p<\infty$, and $1 \le q \le \infty$, let $B_\alpha^{p,q}(\R^n )$ denote the familiar Besov space on $\R^n$. In particular, for  $0< \alpha < 1$ and $1 \le q < \infty$,    $B_\alpha^{p,q}(\R^n )$ may be defined as  the Banach space of all $f\in L^p (\R^n )$ such that
\[
\|f\|_{B_\alpha^{p,q}(\R^n )}= \|f  \|_{L^{p}(\R^n  )}  +  \left[\int_{\R^n} \frac{1}{|t|^{n+\alpha q} }\left( \int_{\R^n} |f (x+t) -f (x)|^{p}\, dx \right)^{q/p} \, dt \right]^{1/q}
\]
is finite (see, e.g., \cite[p. 172]{JK}). If  $0< \alpha < 1$ and $  q = \infty$, then the corresponding norm  is
\[
\|f\|_{B_\alpha^{p,\infty}(\R^n )}= \|f  \|_{L^{p}(\R^n  )}  + \sup_{t \neq 0}\frac{1}{|t|^\alpha}\left( \int_{\R^n} |f (x+t) -f (x)|^{p}\, dx \right)^{1/p}  .
\]
Recall also from \cite[(2.13)]{JK} that
 for any real $\alpha$,
$f \in B_\alpha^{p,q}(\R^n )$ if and only if $f \in B_{\alpha-1}^{p,q}(\R^n )$ and $\nabla f \in B_{\alpha-1}^{p,q}(\R^n ;\R^n )$.
For $\alpha \ge 0$,   we define
$$
B_\alpha^{p,q}(\Omega ) = \left\{ R_\Omega f \,:\, f \in  B_\alpha^{p,q}(\R^n )\right\} ,
$$
which is a Banach space equipped with the usual quotient norm. Then $R_\Omega$ is bounded from  $B_\alpha^{p,q}(\R^n )$ into $B_\alpha^{p,q}(\Omega )$ and $E_\Omega$ is bounded from $B_\alpha^{p,q}(\Omega )$  into $B_\alpha^{p,q}(\R^n )$ (see \cite[Proposition 2.17]{JK}).

\begin{lem}\label{embedding of L into B on domain} Let $0 \le \alpha, \alpha_0 <\infty$, $1< p, p_0 <\infty$, and $ 1\le q   \le \infty$.
   If $\alpha-\alpha_0 = n/p-n/p_0 >0$, then
$$
L_\alpha^{p,q}(\R^n )\hookrightarrow B_{\alpha_0}^{p_0 , q}(\R^n )
\quad\mbox{and}\quad L_\alpha^{p,q}(\Omega )\hookrightarrow B_{\alpha_0}^{p_0 , q}(\Omega ).
$$
\end{lem}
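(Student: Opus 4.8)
The plan is to reduce the assertion on $\Omega$ to the one on $\R^n$, and to obtain the latter from the Seeger--Trebels embedding theorem. The reduction is immediate from facts already in hand: by Lemma~\ref{interp for SL on domains}~(i) the extension operator $E_\Omega$ is bounded from $L_\alpha^{p,q}(\Omega)$ into $L_\alpha^{p,q}(\R^n)$, and by \cite[Proposition 2.17]{JK} (quoted just above) the restriction $R_\Omega$ is bounded from $B_{\alpha_0}^{p_0,q}(\R^n)$ into $B_{\alpha_0}^{p_0,q}(\Omega)$. Since $R_\Omega E_\Omega$ is the identity on $\Omega$, once the Euclidean embedding $L_\alpha^{p,q}(\R^n)\hookrightarrow B_{\alpha_0}^{p_0,q}(\R^n)$ is known, every $u\in L_\alpha^{p,q}(\Omega)$ satisfies $u=R_\Omega(E_\Omega u)\in R_\Omega\bigl(B_{\alpha_0}^{p_0,q}(\R^n)\bigr)=B_{\alpha_0}^{p_0,q}(\Omega)$ together with
\[
\|u\|_{B_{\alpha_0}^{p_0,q}(\Omega)}\;\lesssim\;\|E_\Omega u\|_{B_{\alpha_0}^{p_0,q}(\R^n)}\;\lesssim\;\|E_\Omega u\|_{L_\alpha^{p,q}(\R^n)}\;\lesssim\;\|u\|_{L_\alpha^{p,q}(\Omega)} .
\]
So the whole point is the embedding on $\R^n$.

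For the $\R^n$ statement I would appeal to \cite[Theorem 1.2]{seeger}. The hypothesis $\alpha-\alpha_0=n/p-n/p_0>0$ is precisely the relation $\alpha-n/p=\alpha_0-n/p_0$ with $p<p_0$, i.e.\ the ``Jawerth line'' along which smoothness is traded for integrability, while $L_\alpha^{p,q}(\R^n)$ is by definition the Bessel-potential space modelled on the Lorentz space $L^{p,q}(\R^n)$ --- equivalently, the Lorentz Triebel--Lizorkin space with microscopic index $2$ to which that theorem applies. Thus the only thing to verify by hand is that the parameters $\alpha,\alpha_0,p,p_0,q$ fall in the admissible range of \cite[Theorem 1.2]{seeger}; its conclusion is exactly $L_\alpha^{p,q}(\R^n)\hookrightarrow B_{\alpha_0}^{p_0,q}(\R^n)$, with the Besov fine index equal to the Lorentz fine index $q$ of the source. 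In all the applications in Sections~4 and~5 only the case $q=\infty$ is needed.

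The substantive difficulty lies entirely in that cited theorem, and it is worth recording why no soft argument works in the generality stated. Applying $(I-\Delta)^{m/2}$ one may assume without loss of generality that $0<\alpha_0<1$, so that $B_{\alpha_0}^{p_0,q}$ carries its difference-quotient norm. Writing $f=G_\alpha\ast g$ with $g=(I-\Delta)^{\alpha/2}f\in L^{p,q}(\R^n)$, so that $f(\cdot+h)-f=(G_\alpha(\cdot+h)-G_\alpha)\ast g$, one can bound $\|G_\alpha(\cdot+h)-G_\alpha\|_{L^{r,1}(\R^n)}\lesssim|h|^{\alpha_0}$ with $1/r=1-(1/p-1/p_0)$ from the pointwise estimates on $G_\alpha$ and $\nabla G_\alpha$, and then Young's convolution inequality in Lorentz spaces yields $\|f(\cdot+h)-f\|_{L^{p_0,q}(\R^n)}\lesssim|h|^{\alpha_0}\|g\|_{L^{p,q}(\R^n)}$. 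But the Besov norm demands the genuine $L^{p_0}$ norm of the differences, and for the integrated ($q<\infty$) norm a pointwise-in-$h$ modulus-of-continuity bound is not summable; recovering the correct $L^{p_0}$ integrability and the $\ell^q$ behaviour forces one to exploit cancellation between dyadic frequency blocks via a Littlewood--Paley / maximal-function argument of Jawerth type. That is the hard part, it is carried out in \cite{seeger}, and I would not attempt to reproduce it.
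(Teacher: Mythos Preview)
Your proof is correct and follows essentially the same approach as the paper: both appeal to the Seeger--Trebels embedding \cite[Theorem~1.2]{seeger} for the $\R^n$ statement (the paper makes explicit the identifications $L_\alpha^{p,q}(\R^n)=F_2^\alpha[L^{p,q}(\R^n)]$ and $B_{\alpha_0}^{p_0,q}(\R^n)=B_q^{\alpha_0}[L^{p_0,p_0}(\R^n)]$, which you allude to), and then transfer to $\Omega$ via the bounded extension $E_\Omega$ and restriction $R_\Omega$. Your additional discussion of why a naive modulus-of-continuity argument fails is expository and not in the paper, but does no harm.
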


\begin{proof} The lemma is   an immediate  consequence of a general embedding result  due to  Seeger and Trebels \cite{seeger}, who indeed   established complete  embedding results for   Tribel-Lizorkin-Lorentz spaces $F_r^\alpha [L^{p,q}(\R^n )]$ and   Besov-Lorentz spaces $B_q^\alpha [L^{p,r}(\R^n )]$.

Suppose that $\alpha-\alpha_0 =n/p-n/p_0 >0$. Then  it follows from  \cite[Theorem 1.2]{seeger} that
\begin{equation}\label{embedding of F into B}
F_r^\alpha [L^{p,q}(\R^n )] \hookrightarrow B_q^{\alpha_0} [L^{p_0, r_0}(\R^n )] \quad\mbox{for any}\,\, 1 \le r, r_0 \le \infty.
\end{equation}
Note that
$$
F_2^\alpha [L^{p,q}(\R^n )] =L_\alpha^{p,q} (\R^n )\quad\mbox{and}\quad
B_q^{\alpha_0} [L^{p_0,p_0}(\R^n )]= B_{\alpha_0}^{p_0, q}(\R^n )
$$
(see \cite[p. 1018]{seeger}). Hence by (\ref{embedding of F into B}), we  have
\[
L_\alpha^{p,q}(\R^n ) \hookrightarrow B_{\alpha_0}^{p_0 ,q}(\R^n ),
\]
which   implies, by the   boundeness  of $R_\Omega$ and $E_\Omega$, that
$L_\alpha^{p,q}(\Omega )   \hookrightarrow B_{\alpha_0}^{p_0 ,q}(\Omega )$.
\end{proof}

\subsection{Trace results}\label{subsection: trace}

The trace operator ${\rm Tr}$ is defined initially on $C^\infty(\overline{\Omega})$ by
\[
{\rm Tr}\,  u = u |_{\partial\Omega}\quad \text{for all } u\in C^\infty(\overline{\Omega}).
\]
Let $ 1<p<\infty$,  $ 1/p <\alpha< \infty$, and $1 \le q \le \infty$.
Choose $1<p_0 , p_1 < \infty$ and $0<\theta< 1$ such that $1/ p_1 < 1/p < 1/p_0 < \alpha$ and $1/p = (1-\theta)/p_0 +\theta /p_1$. Then it follows from the classical trace theorem (see, e.g., \cite[Theorems 1 and 2]{J}, \cite[Chapter VII]{JW}, and \cite[Theorem 3.1]{JK}) that for each $i=0,1$, ${\rm Tr}$ extends uniquely to a bounded linear operator $T_i$ from  $L_\alpha^{p_i} (\Omega )$ into $L^{p_i} (\partial\Omega )$.
Note that $L_\alpha^{p_0} (\Omega )\cap L_\alpha^{p_1} (\Omega )=L_\alpha^{p_1} (\Omega )$ and $T_0 =T_1$ on $L_\alpha^{p_1} (\Omega )$.
Hence by Lemma \ref{interp for SL on domains} (ii), ${\rm Tr}$ extends uniquely to a bounded linear operator   from  $L_\alpha^{p,q} (\Omega )$ into $L^{p,q} (\partial\Omega )$, still denoted by ${\rm Tr}$.  Let  $L_{\alpha,0}^{p,q} (\Omega )$  be  the kernel of the trace operator ${\rm Tr}: L_\alpha^{p,q} (\Omega ) \rightarrow L^{p,q} (\partial\Omega )$:
\[
L_{\alpha,0}^{p,q} (\Omega ) =\left\{ u \in L_\alpha^{p,q} (\Omega )\,:\, {\rm Tr} \, u =0 \,\,\mbox{on}\,\,\partial\Omega \right\}.
\]
For $1<p=q<\infty$, we write $L_{\alpha,0}^{p} (\Omega ) = L_{\alpha,0}^{p,p} (\Omega )$. Then it follows from Lemma \ref{real-interp-S on domains} (ii) and     \cite[Proposition 3.3]{JK} that
\begin{equation}\label{hat-Lpalpha}
\widetilde{L}_{\alpha}^{p}(\Omega) =
 L_{\alpha,0}^{p} (\Omega )  \quad \mbox{for}\,\,1/p<\alpha < 1+1/p .
\end{equation}

For $s > 0$, $1<p<\infty$, and $1\le q \le \infty$,
let  $\mathcal B^{p,q}_{s}(\partial \Omega )$ denote the range  of ${\rm Tr}:  L_{s+1/p}^{p,q} (\Omega )  \to L^{p,q}(\partial \Omega)$, which is a Banach space  equipped with the usual   quotient norm. Then  it follows from Lemma \ref{interp for SL on domains}  (iv) that if $C^\infty (\partial\Omega ) = \left\{  u|_{\partial\Omega}\,:\, u \in C_c^\infty (\R^n ) \right\}$, then $C^\infty (\partial\Omega )$ is dense in  $\mathcal B^{p,q}_{s}(\partial \Omega )$ for   $1 \le q<\infty$. If $s>0$, $1<p<\infty$, and $1 < q \le \infty$, we denote by $\mathcal{B}_{-s}^{p,q}(\partial\Omega )$ the dual space of $\mathcal{B}_{s}^{p' ,q'}(\partial\Omega )$.
For    $1<p=q<\infty$, we write  $\mathcal B^{p}_{s}(\partial \Omega )= \mathcal B^{p,p}_{s}(\partial \Omega )$. It is well known (see, e.g.,  \cite[Theorem 3.1]{JK}) that if $0< s < 1$ and $1< p<\infty$, then  $\mathcal B^{p}_{s}(\partial \Omega )$ coincides with the standard boundary Besov space $B_s^p (\partial \Omega )$.
In general,  $\mathcal B^{p,q}_{s}(\partial \Omega )$ is a real interpolation space of two boundary Besov spaces.

\begin{lem}\label{trace results-1}  Let $1<p<\infty$, $1/p < \alpha <1 +1/p$, and $1 \le q \le \infty$.
\begin{enumerate}[{\upshape(i)}]
\item  There is a bounded linear operator ${\rm Ex} : \mathcal B^{p,q}_{\alpha-1/p}(\partial \Omega ) \rightarrow L_\alpha^{p,q} (\Omega )$ such that ${\rm Tr} \circ {\rm Ex}  $ is  the identity on $\mathcal B^{p,q}_{\alpha-1/p}(\partial \Omega )$.
\item If  $1< p_0 < p < p_1 <\infty$, $1/p_0< \alpha <1+1/p_1$, $0< \theta <1$, and $1/p=(1-\theta)/p_0+\theta/p_1$,
then
\[
\left(  B^{p_0}_{\alpha-1/p_0}(\partial \Omega ),  B^{p_1}_{\alpha-1/p_1}(\partial \Omega ) \right)_{\theta,q}= \mathcal B^{p,q}_{\alpha-1/p}(\partial \Omega ).
\]
\end{enumerate}
\end{lem}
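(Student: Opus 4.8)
\textbf{Proof plan for Lemma~\ref{trace results-1}.}
The plan is to obtain both parts simultaneously from one bounded extension operator together with the real‑interpolation identity for Sobolev--Lorentz spaces on $\Omega$ (Lemma~\ref{interp for SL on domains}(ii)). First I would fix $p_0<p<p_1$ and $\theta$ as in part (ii); the hypotheses $1<p_0<p<p_1<\infty$ and $1/p_0<\alpha<1+1/p_1$ force $0<\alpha-1/p_i<1$ for $i=0,1$, so the spaces $B^{p_i}_{\alpha-1/p_i}(\partial\Omega)$ are the genuine boundary Besov spaces and the classical trace theory (\cite{J,JW,JK}) applies on the $L_\alpha^{p_i}(\Omega)$‑scale. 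The key input I would extract from that theory is not merely that ${\rm Tr}\colon L_\alpha^{p_i}(\Omega)\to B^{p_i}_{\alpha-1/p_i}(\partial\Omega)$ is bounded and onto, but that there is a \emph{single} linear operator $\mathcal E$ — for instance the Jonsson--Wallin universal extension operator associated with the $(n-1)$‑set $\partial\Omega$ (or the harmonic extension solving the Dirichlet problem in $\Omega$) — which is bounded from $B^{p_i}_{\alpha-1/p_i}(\partial\Omega)$ into $L_\alpha^{p_i}(\Omega)$ for \emph{both} $i=0,1$ and satisfies ${\rm Tr}\circ\mathcal E={\rm id}$ on $B^{p_0}_{\alpha-1/p_0}(\partial\Omega)$ and on $B^{p_1}_{\alpha-1/p_1}(\partial\Omega)$, hence (by splitting $g=g_0+g_1$) on their sum.

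Set $X_{\theta,q}:=\bigl(B^{p_0}_{\alpha-1/p_0}(\partial\Omega),\,B^{p_1}_{\alpha-1/p_1}(\partial\Omega)\bigr)_{\theta,q}$. Applying the exactness of the real interpolation functor to $\mathcal E$ and using Lemma~\ref{interp for SL on domains}(ii) gives a bounded operator $\mathcal E\colon X_{\theta,q}\to (L_\alpha^{p_0}(\Omega),L_\alpha^{p_1}(\Omega))_{\theta,q}=L_\alpha^{p,q}(\Omega)$. Interpolating the two consistent maps ${\rm Tr}\colon L_\alpha^{p_i}(\Omega)\to B^{p_i}_{\alpha-1/p_i}(\partial\Omega)$ (consistent because $T_0=T_1$ on $L_\alpha^{p_1}(\Omega)$, as recorded before the lemma) gives a bounded operator ${\rm Tr}\colon L_\alpha^{p,q}(\Omega)\to X_{\theta,q}$, which via the continuous inclusion $X_{\theta,q}\hookrightarrow L^{p,q}(\partial\Omega)$ agrees with the trace operator ${\rm Tr}\colon L_\alpha^{p,q}(\Omega)\to L^{p,q}(\partial\Omega)$ already defined. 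Since ${\rm Tr}\circ\mathcal E={\rm id}$ on $B^{p_0}_{\alpha-1/p_0}(\partial\Omega)+B^{p_1}_{\alpha-1/p_1}(\partial\Omega)\supset X_{\theta,q}$, we get ${\rm Tr}\circ\mathcal E={\rm id}_{X_{\theta,q}}$; thus $X_{\theta,q}$ is a retract of $L_\alpha^{p,q}(\Omega)$ via the pair $(\mathcal E,{\rm Tr})$.

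Part (ii) is then a norm comparison: for $g\in X_{\theta,q}$ we have $g={\rm Tr}(\mathcal E g)$ with $\mathcal E g\in L_\alpha^{p,q}(\Omega)$, so $g\in{\rm Tr}(L_\alpha^{p,q}(\Omega))=\mathcal B^{p,q}_{\alpha-1/p}(\partial\Omega)$ and $\|g\|_{\mathcal B^{p,q}_{\alpha-1/p}(\partial\Omega)}\le\|\mathcal E g\|_{L_\alpha^{p,q}(\Omega)}\lesssim\|g\|_{X_{\theta,q}}$; conversely, given $g\in\mathcal B^{p,q}_{\alpha-1/p}(\partial\Omega)$ choose $u\in L_\alpha^{p,q}(\Omega)$ with ${\rm Tr}\,u=g$ and $\|u\|_{L_\alpha^{p,q}(\Omega)}$ near the quotient norm, whence $g={\rm Tr}\,u\in X_{\theta,q}$ with $\|g\|_{X_{\theta,q}}\lesssim\|u\|_{L_\alpha^{p,q}(\Omega)}$. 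This proves $\mathcal B^{p,q}_{\alpha-1/p}(\partial\Omega)=X_{\theta,q}$ with equivalent norms. For part (i) it now suffices to take ${\rm Ex}:=\mathcal E$, regarded via this identification as a bounded operator $\mathcal B^{p,q}_{\alpha-1/p}(\partial\Omega)\to L_\alpha^{p,q}(\Omega)$; the retract property gives ${\rm Tr}\circ{\rm Ex}={\rm id}$ on $\mathcal B^{p,q}_{\alpha-1/p}(\partial\Omega)$. (Equivalently, one may argue (i) directly: $P:=\mathcal E\circ{\rm Tr}$ is a bounded projection on $L_\alpha^{p_i}(\Omega)$, hence — by interpolation of projections, cf.\ \cite{Mayb05} — on $L_\alpha^{p,q}(\Omega)$, and $\|\mathcal E\,{\rm Tr}\,u\|_{L_\alpha^{p,q}(\Omega)}=\|Pu\|_{L_\alpha^{p,q}(\Omega)}\lesssim\|u\|_{L_\alpha^{p,q}(\Omega)}$ gives boundedness of ${\rm Ex}=\mathcal E$ in the quotient norm.)

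The step I expect to be the main obstacle is the very first one: producing a \emph{single} extension operator bounded on the two distinct Sobolev--Lorentz scales $L_\alpha^{p_0}(\Omega)$ and $L_\alpha^{p_1}(\Omega)$ at once, rather than two unrelated right inverses — the interpolation argument collapses without this, since $\mathcal E g$ must be one fixed element whose $K$‑functional is controlled. This is exactly where one must invoke the concrete ``universal'' mapping properties of a specific extension (the Jonsson--Wallin Whitney‑type operator, or the harmonic extension together with interior elliptic estimates in Lipschitz domains) and not the abstract surjectivity of ${\rm Tr}$. A secondary technical point is the case $q=\infty$, where $C^\infty(\partial\Omega)$ is not dense: there the identity ${\rm Tr}\circ\mathcal E={\rm id}$ and the consistency of the various trace operators must be verified on the sum space by the elementary decomposition $g=g_0+g_1$ rather than by a density argument, which is why I would carry out all the identifications on $B^{p_0}_{\alpha-1/p_0}(\partial\Omega)+B^{p_1}_{\alpha-1/p_1}(\partial\Omega)$ from the outset.
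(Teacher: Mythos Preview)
Your proposal is correct and follows essentially the same route as the paper: interpolate both ${\rm Tr}$ and a single universal extension operator between the endpoints $L_\alpha^{p_0}(\Omega)$ and $L_\alpha^{p_1}(\Omega)$, then read off both (i) and (ii) from the resulting retract structure. What you flag as the ``main obstacle''---the existence of one extension operator bounded simultaneously on $B^{p_0}_{\alpha-1/p_0}(\partial\Omega)\to L_\alpha^{p_0}(\Omega)$ and $B^{p_1}_{\alpha-1/p_1}(\partial\Omega)\to L_\alpha^{p_1}(\Omega)$---is exactly what the paper invokes from \cite[Theorem~3.1]{JK}, so your concern is well placed and resolved by that reference.
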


\begin{proof}
Suppose   that  $1< p_0 < p < p_1 <\infty$, $1/p_0< \alpha <1+1/p_1$, $0< \theta <1$, and $1/p=(1-\theta)/p_0+\theta/p_1$.

Since    ${\rm Tr}$ is bounded from $L^{p_i}_{\alpha}(\Omega)$ into $ B^{p_i}_{\alpha-1/p_i}(\partial \Omega)$ for $i=0,1$,   it follows from Lemma \ref{interp for SL on domains} (ii) that  ${\rm Tr}$ is bounded from $L^{p,q}_{\alpha}(\Omega)$ into $\left(  B^{p_0}_{\alpha-1/p_0}(\partial \Omega ),  B^{p_1}_{\alpha-1/p_1}(\partial \Omega ) \right)_{\theta,q}$. By the definition of $\mathcal{B}^{p,q}_{\alpha-1/p}(\partial \Omega )$, we have
\[
 \mathcal B^{p,q}_{\alpha-1/p}(\partial \Omega ) \hookrightarrow\left(  B^{p_0}_{\alpha-1/p_0}(\partial \Omega ),  B^{p_1}_{\alpha-1/p_1}(\partial \Omega ) \right)_{\theta,q}.
\]
To prove the reverse embedding, we recall from \cite[Theorem 3.1]{JK}  that
there is a   linear extension operator ${\rm Ex}$ such that ${\rm Ex}$ maps $B^r_s(\partial \Omega)$ into $L^r_{s+1/r}(\Omega)$ boundedly   and ${\rm  Tr}\circ {\rm Ex}$ is the identity operator on $B^r_s(\partial \Omega)$  for all $0<s<1$  and $1<r<\infty$.   By  Lemma \ref{interp for SL on domains} (ii) again, ${\rm Ex}$ is bounded from $\left(  B^{p_0}_{\alpha-1/p_0}(\partial \Omega ),  B^{p_1}_{\alpha-1/p_1}(\partial \Omega ) \right)_{\theta,q}$ into $L^{p,q}_{\alpha}(\Omega)$. Since ${\rm  Tr}\circ {\rm Ex}$ is the identity and ${\rm  Tr} $ is bounded from $L^{p,q}_{\alpha}(\Omega)$ onto $\mathcal{B}^{p,q}_{\alpha-1/p}(\partial \Omega )$, we obtain
\[
\left(  B^{p_0}_{\alpha-1/p_0}(\partial \Omega ),  B^{p_1}_{\alpha-1/p_1}(\partial \Omega ) \right)_{\theta,q} \hookrightarrow \mathcal B^{p,q}_{\alpha-1/p}(\partial \Omega ).
\]
This completes the proof of the lemma.
\end{proof}

\begin{rmk}
The interpolation result  in Lemma  \ref{trace results-1} should be compared with    the following classical  one (see  \cite[(2.15)]{JK}):
$$
\big(\,B^{p,q_1}_{\alpha_1}(\R^n),B^{p,q_2}_{\alpha_2}(\R^n)\,\big)_{\theta,q}= B^{p,q}_{\alpha}(\R^n)
$$
whenever $-\infty < \alpha_1\neq \alpha_2 < \infty$, $0<\theta<1$, $\alpha=(1-\theta)\alpha_0+\theta\alpha_1$, and $1\le p,q_1,q_2,q\le \infty$.
\end{rmk}

\section{Poisson equation}

In this section, we consider the   Dirichlet problem for the Poisson equation:
\begin{equation}\label{eq:D-1}
\left\{\begin{alignedat}{2}
- \Delta u  & =f & \quad & \text{in }\Omega,\\
  u & =0 & \quad & \text{on }\partial\Omega ,
\end{alignedat}\right.
\end{equation}
where $\Omega$ is a bounded Lipschitz domain in $\R^n , n \ge 2$.

The  Laplacian $\Delta$ is defined initially on $C^\infty(\overline{\Omega})$.
Let $1<p<\infty$. Then by H\"{o}lder's inequality,
\[
\int_\Omega (-\Delta u) \psi \,d x = \int_\Omega u (-\Delta \psi )  \,d x \le C \|u\|_{L^{p} (\Omega )}\|\psi \|_{L_2^{p'}(\Omega )}.
\]
for all $u\in C^\infty(\overline{\Omega})$ and $\psi \in C_c^\infty (\Omega )$. Hence  $-\Delta$ can be extended  uniquely to a bounded linear operator from $L^{p}(\Omega )$ into $L_{-2}^{p}(\Omega )$, by defining
\[
\langle -\Delta u , g \rangle = \int_\Omega u (-\Delta g )  \,d x \quad\mbox{for all}\,\, u \in L^{p}(\Omega )\,\,\mbox{and}\,\, g \in \widetilde{L}_{2}^{p'}(\Omega ).
\]
Obviously, $-\Delta$ is bounded from $L_2^p (\Omega )$ into $L^p (\Omega )$.
On the other hand, it  follows from  (\ref{complex interp for S}) and (\ref{comp-interp-S on domains})  that
$$
\left[ \, L^{p} (\Omega ) , L_2^{p} (\Omega)  \,\right]_{\alpha /2} = L_{\alpha}^{p}(\Omega ) \quad\mbox{and}\quad \left[ \, L_{  -2}^{p} (\Omega ) , L^{p} (\Omega)  \,\right]_{\alpha /2} =L_{\alpha-2}^p (\Omega )
$$
for $0<\alpha<2$. Hence,  by complex interpolation,  $-\Delta$ is bounded from $L_{\alpha}^{p}(\Omega )$ into $L_{\alpha-2}^p (\Omega )$ for   $0 \le \alpha \le   2$. Moreover, by real interpolation, it follows from Lemmas \ref{interp for SL on domains} and \ref{real-interp-S on domains} that $-\Delta$ is bounded from $L_{\alpha}^{p,q}(\Omega )$ into $L_{\alpha-2}^{p,q} (\Omega )$ for   $0 \le \alpha \le   2$, $1<p<\infty$, and $1 \le q\le \infty$.

\subsection{Solvability  for data in $L_{\alpha-2}^{p,q}(\Omega )$ with $1/p < \alpha \le 2$}

 We start with   unique solvability results for the Poisson equation  with  data in $L_{\alpha-2}^{p}(\Omega )$ with $1/p < \alpha \le 2$.

\begin{lem}\label{Poisson-S spaces}
\begin{enumerate}[{\upshape(i)}]
\item  Let $\Omega$ be a bounded $C^1$-domain in $\R^n   $, $n\ge 2$. Assume that $1<p<\infty$ and $1/p <\alpha<1+ 1/p$.
Then for every $f\in L_{\alpha-2}^{p}(\Omega )$, there exists a unique solution $u\in L_{\alpha, 0}^{p}(\Omega )$ of (\ref{eq:D-1}). Moreover,
\[
\|u\|_{L_{\alpha}^{p}(\Omega )} \le C \|f\|_{L_{\alpha-2}^{p}(\Omega )}
\]
for some $C=C(n,\alpha ,p, \Omega )$.
\item Let $\Omega$ be a bounded $C^{1,1}$-domain in $\R^n   $, $n\ge 2$.
Assume that $1<p<\infty$ and $  1   \le \alpha \le 2$. Then for every $f\in L_{\alpha-2}^{p}(\Omega )$, there exists a unique solution $u\in L_{1 , 0}^{p}(\Omega ) \cap L_{\alpha}^{p}(\Omega )$ of (\ref{eq:D-1}). Moreover,
\[
\|u\|_{L_{\alpha}^{p}(\Omega )} \le C \|f\|_{L_{\alpha-2}^{p}(\Omega )}
\]
for some $C=C(n, \alpha ,p, \Omega )$.
\end{enumerate}
\end{lem}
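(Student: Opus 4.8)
The plan is to deduce both statements from the classical $L^p$-solvability theory for the Dirichlet Laplacian on $C^1$- and $C^{1,1}$-domains, combined with the interpolation and duality identities of Section~3. Two bookkeeping observations will be used repeatedly. First, by \eqref{hat-Lpalpha} one has $L_{\alpha,0}^p(\Omega)=\widetilde{L}_\alpha^p(\Omega)$ whenever $1/p<\alpha<1+1/p$, so in part~(i) the solution space is $\widetilde{L}_\alpha^p(\Omega)$, whose dual is $L_{-\alpha}^{p'}(\Omega)$. Second, when $1\le\alpha\le 2$ one has $L_{1,0}^p(\Omega)\cap L_\alpha^p(\Omega)=L_{\alpha,0}^p(\Omega)$: indeed $L_\alpha^p(\Omega)\hookrightarrow L_1^p(\Omega)=W^{1,p}(\Omega)$ by Lemma~\ref{properties of SL on domains} and \eqref{L equals W}, and a $W^{1,p}$-function with vanishing trace lies in $W_0^{1,p}(\Omega)=L_{1,0}^p(\Omega)$. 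In both parts, $-\Delta$ denotes the bounded operator from $L_\alpha^{p}(\Omega)$ into $L_{\alpha-2}^{p}(\Omega)$ introduced above.

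For part~(i) I would start from the base case $p=2$, $\alpha=1$, where the assertion is the Lax--Milgram theorem: since $L_{-1}^2(\Omega)=[\widetilde{L}_1^2(\Omega)]^*$, for each $f\in L_{-1}^2(\Omega)$ there is a unique $u\in L_{1,0}^2(\Omega)$ with $-\Delta u=f$ and $\|u\|_{L_1^2(\Omega)}\lesssim\|f\|_{L_{-1}^2(\Omega)}$. For the full range the key input is the sharp solvability of the inhomogeneous Dirichlet problem on bounded $C^1$-domains (see, e.g., \cite{JK}, and also \cite{FMM}): for every $1<q<\infty$ and $1/q<\beta<1+1/q$, the operator $-\Delta\colon L_{\beta,0}^{q}(\Omega)\to L_{\beta-2}^{q}(\Omega)$ is onto and admits a bounded right inverse. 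Taking $(q,\beta)=(p,\alpha)$ produces, for each $f\in L_{\alpha-2}^p(\Omega)$, a solution $u\in L_{\alpha,0}^p(\Omega)$ of \eqref{eq:D-1} with $\|u\|_{L_\alpha^p(\Omega)}\lesssim\|f\|_{L_{\alpha-2}^p(\Omega)}$. For uniqueness I would identify the adjoint of $T:=-\Delta\colon\widetilde{L}_\alpha^p(\Omega)\to L_{\alpha-2}^p(\Omega)=[\widetilde{L}_{2-\alpha}^{p'}(\Omega)]^*$: integrating by parts twice on $C_c^\infty(\Omega)$ (which is dense in $\widetilde{L}_\alpha^p(\Omega)$ and in $\widetilde{L}_{2-\alpha}^{p'}(\Omega)$ by definition) and using reflexivity, one sees that $T^*$ is the Dirichlet Laplacian $-\Delta\colon\widetilde{L}_{2-\alpha}^{p'}(\Omega)\to L_{-\alpha}^{p'}(\Omega)=[\widetilde{L}_\alpha^p(\Omega)]^*$. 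Since $2-\alpha\in(1/p',1+1/p')$, the same $C^1$-solvability result applied with $(q,\beta)=(p',2-\alpha)$ shows $T^*$ is onto, hence $T$ is injective. Thus $T$ is an isomorphism, which yields uniqueness and completes part~(i).

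For part~(ii), with $\Omega$ a bounded $C^{1,1}$-domain: the case $\alpha=1$ is part~(i), and for $\alpha=2$ the classical Calder\'on--Zygmund estimate on $C^{1,1}$-domains (see, e.g., \cite{JK,FMM}) gives that $-\Delta$ is an isomorphism from $L_2^p(\Omega)\cap L_{1,0}^p(\Omega)$ onto $L^p(\Omega)$. Let $\mathcal{S}$ denote the associated solution operator; by uniqueness at $\alpha=1$ it is a single operator on $L_{-1}^p(\Omega)$, bounded $L_{-1}^p(\Omega)\to L_1^p(\Omega)$ and $L^p(\Omega)\to L_2^p(\Omega)$. By \eqref{comp-interp-S on domains} (with $p_0=p_1=p$, $\alpha_0=1$, $\alpha_1=0$) and \eqref{complex interp for S}, complex interpolation of operators gives $\mathcal{S}\colon L_{\alpha-2}^p(\Omega)\to L_\alpha^p(\Omega)$ boundedly for every $1\le\alpha\le 2$. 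To see that $\mathcal{S}f$ has zero trace, approximate $f\in L_{\alpha-2}^p(\Omega)$ in $L_{\alpha-2}^p(\Omega)$ by $f_j\in C_c^\infty(\Omega)$ (Lemma~\ref{real-interp-S on domains}); then $\mathcal{S}f_j\in L_{1,0}^p(\Omega)$, $\mathcal{S}f_j\to\mathcal{S}f$ in $L_\alpha^p(\Omega)\hookrightarrow W^{1,p}(\Omega)$, and the continuity of the trace on $W^{1,p}(\Omega)$ gives ${\rm Tr}\,\mathcal{S}f=0$, so $u:=\mathcal{S}f\in L_{1,0}^p(\Omega)\cap L_\alpha^p(\Omega)$ solves \eqref{eq:D-1} with the required estimate. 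For uniqueness, if $u\in W_0^{1,p}(\Omega)$ is harmonic, then for every $g\in L^{p'}(\Omega)$, choosing $w\in W^{2,p'}(\Omega)\cap W_0^{1,p'}(\Omega)$ with $-\Delta w=g$ and integrating by parts gives $\int_\Omega ug\,dx=\int_\Omega\nabla u\cdot\nabla w\,dx=0$, hence $u=0$.

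The main obstacle is not in the deduction just outlined but in the two non-elementary facts it quotes: the sharp solvability of the Dirichlet Laplacian on $C^1$-domains over the whole range $1/p<\alpha<1+1/p$, and the endpoint $W^{2,p}$-estimate on $C^{1,1}$-domains. Both rest on the $L^p$-theory of boundary layer potentials and are taken from \cite{JK,FMM}. The remaining care is purely in the bookkeeping of Section~3 --- keeping straight the three descriptions of the solution space, $L_{\alpha,0}^p$, $\widetilde{L}_\alpha^p$, and $L_{1,0}^p\cap L_\alpha^p$, across the different parameter ranges --- where \eqref{hat-Lpalpha} and the observations in the first paragraph do the work.
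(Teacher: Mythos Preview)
Your proposal is correct and follows essentially the same route as the paper: part~(i) is quoted from \cite[Theorems 1.1 and 1.3]{JK}, and part~(ii) is obtained by complex interpolation of the solution operator between the endpoints $\alpha=1$ (from part~(i)) and $\alpha=2$ (Calder\'on--Zygmund), using \eqref{comp-interp-S on domains} and \eqref{complex interp for S} exactly as you do. Two minor points of economy: in part~(i) the Jerison--Kenig result already gives the full isomorphism, so your separate duality argument for injectivity, while correct, is unnecessary; and in part~(ii) the paper avoids your density argument for the zero trace by simply noting that $L_{\alpha-2}^p(\Omega)\hookrightarrow L_{-1}^p(\Omega)$, so the solution operator $T$ already lands in $L_{1,0}^p(\Omega)$ before one even interpolates to gain the extra regularity $L_\alpha^p(\Omega)$.
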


\begin{proof}
 Part (i) is a very special case of \cite[Theorems 1.1 and 1.3]{JK}.
To prove Part (ii), suppose that $\Omega$ is of class $C^{1,1}$, $1<p<\infty$,   and  $  1  \le  \alpha \le 2$. The case $\alpha =1$ is covered by Part (i) and the case $\alpha = 2$ follows    from  the classical Calderon-Zygmund theory.
Suppose that $1< \alpha < 2$.
Then it follows from Part (i)  with $\alpha=1$
that for every $f \in L_{-1}^{p} (\Omega )$, there exists a unique solution $u \in L_{1 ,0}^{p} (\Omega )$ of  (\ref{eq:D-1}). Let   $T : L_{-1}^{p} (\Omega ) \to L_{1 ,0}^{p} (\Omega )$ be the associated  solution operator for (\ref{eq:D-1}), which is linear and bounded.
Since $L^p_{\alpha-2}(\Omega)\hookrightarrow L_{  -1}^{p} (\Omega )$, $T$ is bounded from  $L^p_{\alpha-2}(\Omega)$ into $L_{1 ,0}^{p} (\Omega )$.
By the Calderon-Zygmund theory, $T$ is also bounded from $L^{p} (\Omega)$ into $L_2^{p} (\Omega )$. Moreover, it   follows from (\ref{comp-interp-S on domains}) and   (\ref{complex interp for S}) that
$$
L_{\alpha -2}^{p} (\Omega )= \left[ \, L_{  -1}^{p} (\Omega ) , L^{p} (\Omega)  \,\right]_{\alpha-1}  \quad\mbox{and}\quad \left[ \, L_{1}^{p} (\Omega ) , L_2^{p} (\Omega) \,\right]_{\alpha-1} = L_{\alpha}^{p} (\Omega ).
$$
Therefore, by complex interpolation, we conclude that    $T$ is bounded from $L_{\alpha -2}^{p} (\Omega )$ into $ L_\alpha^{p} (\Omega )$.
This completes the proof of the lemma.
\end{proof}

 Using the above solvability result, we obtain the following interpolation results.

\begin{lem}\label{Interpolatoin of data-solution}
\begin{enumerate}[{\upshape(i)}]
\item  Let $\Omega$ be a bounded $C^1$-domain in $\R^n $, $n\ge 2$. Assume that $1<p_0 < p_1 <\infty$, $1/p_0 <\alpha<1+ 1/p_1$,  $0< \theta < 1$, $1/p=(1-\theta)/p_0 +\theta/p_1$, and $1 \le q \le \infty$.
Then
\[
\left(\,  L_{\alpha,0}^{p_0}(\Omega ) , L_{\alpha,0}^{p_1}(\Omega ) \, \right)_{\theta,q} = L_{\alpha,0}^{p,q}(\Omega ) .
\]
\item Let $\Omega$ be a bounded $C^{1,1}$-domain in $\R^n$, $n\ge 2$.  Assume that $1<p_0 \neq  p_1 <\infty$, $0< \theta < 1$, $1/p=(1-\theta)/p_0 +\theta/p_1$, $1 \le \alpha \le 2$, and $1 \le q \le \infty$.
Then
\[
\left(\, L_{1 ,0}^{p_0}(\Omega ) \cap  L_{\alpha}^{p_0}(\Omega ) , L_{1,0}^{p_1}(\Omega ) \cap L_{\alpha}^{p_1}(\Omega ) \, \right)_{\theta,q} = L_{1,0}^{p,q}(\Omega ) \cap L_{\alpha}^{p,q}(\Omega ).
\]
\end{enumerate}
\end{lem}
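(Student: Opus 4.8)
The overall plan is to transfer both identities to the data spaces $L_{\alpha-2}^{p,q}(\Omega)$, whose interpolation behaviour is recorded in Lemma~\ref{real-interp-S on domains}(i), exploiting that the Laplacian, respectively the solution operator for (\ref{eq:D-1}), identifies the solution spaces with data spaces through isomorphisms that are \emph{consistent} along the exponent scale. Consistency comes from uniqueness: whenever $p_0<p_1$ one has $L_{\alpha,0}^{p_1}(\Omega)\hookrightarrow L_{\alpha,0}^{p_0}(\Omega)$, and likewise $L_{1,0}^{p_1}(\Omega)\cap L_{\alpha}^{p_1}(\Omega)\hookrightarrow L_{1,0}^{p_0}(\Omega)\cap L_{\alpha}^{p_0}(\Omega)$, by Lemma~\ref{properties of SL on domains}(ii) together with the boundedness and consistency of ${\rm Tr}$, so the solution operators at exponents $p_0$ and $p_1$ must agree on the intersection of their data spaces; hence the abstract interpolation functor applies to an honest isomorphism of Banach couples.

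For Part~(i) I would avoid the solvability results and argue with a consistent bounded projection. The hypotheses $p_0<p_1$ and $1/p_0<\alpha<1+1/p_1$ give $1/p_i<\alpha<1+1/p_i$ for $i=0,1$ and $1/p<\alpha<1+1/p$, so the extension operator ${\rm Ex}$ of \cite[Theorem~3.1]{JK} (cf.\ Lemma~\ref{trace results-1}(i)) is bounded from the relevant boundary Besov spaces into each of $L_\alpha^{p_0}(\Omega)$, $L_\alpha^{p_1}(\Omega)$, $L_\alpha^{p,q}(\Omega)$, and ${\rm Tr}\circ{\rm Ex}$ is the identity. Hence $P:=I-{\rm Ex}\circ{\rm Tr}$ is a bounded projection, consistent across these three spaces, and since ${\rm Ex}$ is injective, $\ker({\rm Ex}\circ{\rm Tr})=\ker{\rm Tr}$, so $P L_\alpha^{r}(\Omega)=L_{\alpha,0}^{r}(\Omega)$ for $r=p_0,p_1$ and $P L_\alpha^{p,q}(\Omega)=L_{\alpha,0}^{p,q}(\Omega)$. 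By Lemma~\ref{interp for SL on domains}(ii) we have $\left(L_\alpha^{p_0}(\Omega),L_\alpha^{p_1}(\Omega)\right)_{\theta,q}=L_\alpha^{p,q}(\Omega)$, and since real interpolation commutes with a consistent bounded projection (as already used for Lemma~\ref{real-interp-S on domains}(i); see \cite[Lemma~3.1.9]{Mayb05}),
\[
\left(L_{\alpha,0}^{p_0}(\Omega),L_{\alpha,0}^{p_1}(\Omega)\right)_{\theta,q}=P\!\left(\left(L_\alpha^{p_0}(\Omega),L_\alpha^{p_1}(\Omega)\right)_{\theta,q}\right)=PL_\alpha^{p,q}(\Omega)=L_{\alpha,0}^{p,q}(\Omega).
\]

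For Part~(ii) the projection $P$ need not exist, since $\alpha$ may reach or exceed $1+1/p_i$, so I would use Lemma~\ref{Poisson-S spaces}(ii). Using $(X_0,X_1)_{\theta,q}=(X_1,X_0)_{1-\theta,q}$ we may assume $p_0<p_1$. Write $Y_r:=L_{1,0}^{r}(\Omega)\cap L_{\alpha}^{r}(\Omega)$. Lemma~\ref{Poisson-S spaces}(ii), combined with $L_{\alpha-2}^{r}(\Omega)\hookrightarrow L_{-1}^{r}(\Omega)$ and Lemma~\ref{Poisson-S spaces}(i) at $\alpha=1$ to bound the $L_1^{r}$-norm of the solution, shows that $-\Delta:Y_{r}\to L_{\alpha-2}^{r}(\Omega)$ is a Banach-space isomorphism for $r=p_0,p_1$, and these are consistent; hence the solution operator is bounded and bijective from $L_{\alpha-2}^{p,q}(\Omega)=\left(L_{\alpha-2}^{p_0}(\Omega),L_{\alpha-2}^{p_1}(\Omega)\right)_{\theta,q}$ (Lemma~\ref{real-interp-S on domains}(i), since $\alpha-2\le 0$) onto $\left(Y_{p_0},Y_{p_1}\right)_{\theta,q}$. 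Interpolating the inclusions $Y_{r}\hookrightarrow L_{1,0}^{r}(\Omega)$ and $Y_{r}\hookrightarrow L_{\alpha}^{r}(\Omega)$ and invoking Part~(i) at $\alpha=1$ together with Lemma~\ref{interp for SL on domains}(ii) yields
\[
\left(Y_{p_0},Y_{p_1}\right)_{\theta,q}\hookrightarrow\left(L_{1,0}^{p_0}(\Omega),L_{1,0}^{p_1}(\Omega)\right)_{\theta,q}\cap\left(L_{\alpha}^{p_0}(\Omega),L_{\alpha}^{p_1}(\Omega)\right)_{\theta,q}=L_{1,0}^{p,q}(\Omega)\cap L_{\alpha}^{p,q}(\Omega).
\]
Conversely, if $u\in L_{1,0}^{p,q}(\Omega)\cap L_{\alpha}^{p,q}(\Omega)$, then $f:=-\Delta u\in L_{\alpha-2}^{p,q}(\Omega)$ with $\|f\|_{L_{\alpha-2}^{p,q}(\Omega)}\lesssim\|u\|_{L_{\alpha}^{p,q}(\Omega)}$; since ${\rm Tr}\,u=0$ while $L_{1,0}^{p,q}(\Omega)\hookrightarrow L_{1,0}^{p_0}(\Omega)$ and $L_{\alpha-2}^{p,q}(\Omega)\hookrightarrow L_{-1}^{p_0}(\Omega)$, uniqueness in Lemma~\ref{Poisson-S spaces}(i) at $\alpha=1$ forces $u$ to be the solution-operator image of $f$, so $u\in\left(Y_{p_0},Y_{p_1}\right)_{\theta,q}$ with $\|u\|_{\left(Y_{p_0},Y_{p_1}\right)_{\theta,q}}\lesssim\|u\|_{L_{1,0}^{p,q}(\Omega)\cap L_{\alpha}^{p,q}(\Omega)}$. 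The two inclusions give the asserted equality.

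The step I expect to be most delicate is the bookkeeping that legitimizes these abstract arguments, namely checking that the solution operators at the two endpoint exponents are literally restrictions of one another, and then translating the outcome from ``the image of the data space under the solution operator'' back into the concrete description ``functions of vanishing trace, lying additionally in $L_{\alpha}^{p,q}(\Omega)$''. In Part~(i) the consistent projection settles this for free; in Part~(ii) it is handled by bootstrapping from Part~(i) and the uniqueness of the Dirichlet problem. A smaller but easily missed point is that $\alpha-2<0$ throughout the relevant range, so the data scale must be interpolated through the negative-order identity in Lemma~\ref{real-interp-S on domains}(i) rather than through Lemma~\ref{interp for SL on domains}(ii).
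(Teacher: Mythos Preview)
Your argument is correct. Part~(ii) follows the paper's route essentially verbatim: transfer to the data scale via the solution operator from Lemma~\ref{Poisson-S spaces}(ii), interpolate there using Lemma~\ref{real-interp-S on domains}(i), obtain one inclusion from Part~(i) at $\alpha=1$ together with Lemma~\ref{interp for SL on domains}(ii), and get the reverse inclusion from uniqueness of the Dirichlet problem.

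Part~(i), however, is genuinely different. The paper proves Part~(i) by the same Poisson-equation mechanism it uses for Part~(ii): it invokes Lemma~\ref{Poisson-S spaces}(i) to build a solution operator $T:L_{\alpha-2}^{p_i}(\Omega)\to L_{\alpha,0}^{p_i}(\Omega)$, interpolates $T$, and then identifies the resulting space with $L_{\alpha,0}^{p,q}(\Omega)$ via the observation $L_\alpha^{p,q}(\Omega)\cap L_{\alpha,0}^{p_0}(\Omega)=L_{\alpha,0}^{p,q}(\Omega)$. Your projection argument $P=I-{\rm Ex}\circ{\rm Tr}$ bypasses the PDE solvability altogether and reduces Part~(i) to the complemented-subspace interpolation already used in Lemma~\ref{real-interp-S on domains}(i). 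This is cleaner and more elementary for Part~(i), and it isolates exactly why the extra $C^{1,1}$ regularity and the Poisson equation are needed only for Part~(ii): once $\alpha$ may exceed $1+1/p_i$, the trace-extension pair of Lemma~\ref{trace results-1} is no longer available at both endpoints, and one is forced to manufacture a consistent isomorphism with the data scale by solving~(\ref{eq:D-1}). The paper's uniform approach, on the other hand, has the virtue of treating both parts by a single template.
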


\begin{proof}
We first  prove Part (i).    Suppose that $\Omega$ is of class $C^1$, $1<p_0 < p_1 <\infty$, $1/p_0 <\alpha<1+ 1/p_1$, $0< \theta < 1$, $1/p=(1-\theta)/p_0 +\theta/p_1$, and $1 \le q \le \infty$.
Then it follows from Lemma \ref{Poisson-S spaces} (i)  that for every $f \in L_{\alpha-2}^{p_0} (\Omega )$, there exists a unique solution $u \in L_{\alpha,0}^{p_0} (\Omega )$ of  (\ref{eq:D-1}). Let   $T : L_{\alpha-2}^{p_0} (\Omega ) \to L_{\alpha,0}^{p_0} (\Omega )$ be the associated  solution operator for (\ref{eq:D-1}).  By Lemma \ref{Poisson-S spaces} (i) again, $T$ is   bounded from  $L_{\alpha-2}^{p_1} (\Omega )$ into $ L_{\alpha,0}^{p_1} (\Omega )$.
Hence by  Lemma \ref{real-interp-S on domains} (i), $T$ is bounded from $   L_{\alpha-2}^{p,q}(\Omega ) $ into $\left(\,  L_{\alpha,0}^{p_0}(\Omega ) , L_{\alpha,0}^{p_1}(\Omega ) \, \right)_{\theta,q}$.
 Suppose that  $u \in L_{\alpha, 0}^{p,q} (\Omega )$ and $f= -\Delta u$. Then $f \in  L_{\alpha-2}^{p,q}(\Omega )$ and $Tf \in \left(\,  L_{\alpha,0}^{p_0}(\Omega ) , L_{\alpha,0}^{p_1}(\Omega ) \, \right)_{\theta,q}$. Since $u, Tf \in L_{\alpha,0}^{p_0}(\Omega )$ and $-\Delta u =-\Delta (Tf)$ in $\Omega$, it follows from Lemma \ref{Poisson-S spaces} (i) that
$u =Tf$ and
$$
\|u\|_{\left(\,  L_{\alpha,0}^{p_0}(\Omega ) , L_{\alpha,0}^{p_1}(\Omega ) \, \right)_{\theta,q}} \lesssim \|f\|_{L_{\alpha-2}^{p,q}(\Omega )} \lesssim  \|u\|_{L_{\alpha, 0}^{p,q} (\Omega )}.
$$
This implies that $L_{\alpha, 0}^{p,q} (\Omega ) \hookrightarrow \left(\,  L_{\alpha,0}^{p_0}(\Omega ) , L_{\alpha,0}^{p_1}(\Omega ) \, \right)_{\theta,q}$. To prove the reverse embedding, we simply observe  that
$$
\left(\,  L_{\alpha,0}^{p_0}(\Omega ) , L_{\alpha,0}^{p_1}(\Omega ) \, \right)_{\theta,q} \hookrightarrow \left(\,  L_{\alpha}^{p_0}(\Omega ) , L_{\alpha}^{p_1}(\Omega ) \, \right)_{\theta,q} = L_{\alpha}^{p,q} (\Omega )
,
$$
$$
\left(\,  L_{\alpha,0}^{p_0}(\Omega ) , L_{\alpha,0}^{p_1}(\Omega ) \, \right)_{\theta,q} \hookrightarrow L_{\alpha,0}^{p_0}(\Omega ),\quad\mbox{and}\quad
L_{\alpha}^{p,q} (\Omega )\cap  L_{\alpha,0}^{p_0}(\Omega ) = L_{\alpha,0}^{p,q}(\Omega ).
$$
This proves Part (i).

We next prove Part (ii).   Suppose  that $\Omega$ is of class $C^{1,1}$,  $1<p_0 <  p_1 <\infty$, $0< \theta < 1$, $1/p=(1-\theta)/p_0 +\theta/p_1$, $1 \le \alpha \le 2$, and $1 \le q \le \infty$. Then since $1/p_0 < 1 < 1+1/p_1$, it follows from Part (i) and Lemma \ref{interp for SL on domains} (ii) that
\[
\left(\, L_{1 ,0}^{p_0}(\Omega ) \cap  L_{\alpha}^{p_0}(\Omega ) , L_{1,0}^{p_1}(\Omega ) \cap L_{\alpha}^{p_1}(\Omega ) \, \right)_{\theta,q} \hookrightarrow   L_{1 ,0}^{p, q}(\Omega )   \cap     L_{\alpha}^{p, q}(\Omega ) .
\]
To prove the reverse embedding, let    $T : L_{\alpha-2}^{p_0} (\Omega ) \to L_{1 ,0}^{p_0}(\Omega ) \cap L_{\alpha}^{p_0} (\Omega )$ be the solution operator for (\ref{eq:D-1}), which is bounded by Lemma \ref{Poisson-S spaces}. Then by Lemma \ref{real-interp-S on domains} (i), $T$ is also bounded from $  L_{\alpha-2}^{p,q}(\Omega ) $ into $\left(\, L_{1 ,0}^{p_0}(\Omega ) \cap  L_{\alpha}^{p_0}(\Omega ) , L_{1,0}^{p_1}(\Omega ) \cap L_{\alpha}^{p_1}(\Omega ) \, \right)_{\theta,q}$. Suppose that $u \in L_{1 ,0}^{p, q}(\Omega )   \cap     L_{\alpha}^{p, q}(\Omega )$ and $f= -\Delta u$. Then  $f \in    L_{\alpha-2}^{p,q}(\Omega ) $,  $u, Tf \in L_{1 ,0}^{p_0}(\Omega )$, and $-\Delta u =-\Delta Tf$ in $\Omega$. Hence by Lemma \ref{Poisson-S spaces} (i), we have
\[
u =Tf \in \left(\, L_{1 ,0}^{p_0}(\Omega ) \cap  L_{\alpha}^{p_0}(\Omega ) , L_{1,0}^{p_1}(\Omega ) \cap L_{\alpha}^{p_1}(\Omega ) \, \right)_{\theta,q}
\]
and
\[
\|u\|_{\left(\, L_{1 ,0}^{p_0}(\Omega ) \cap  L_{\alpha}^{p_0}(\Omega ) , L_{1,0}^{p_1}(\Omega ) \cap L_{\alpha}^{p_1}(\Omega ) \, \right)_{\theta,q}} \lesssim  \|f\|_{     L_{\alpha-2}^{p, q}(\Omega )} \lesssim \|u\|_{     L_{\alpha}^{p, q}(\Omega )}.
\]
This completes the proof of the lemma.
\end{proof}


Immediately from Lemma  \ref{real-interp-S on domains}, (\ref{hat-Lpalpha}), and Lemma  \ref{Interpolatoin of data-solution}, we    obtain the following  result.

\begin{lem}\label{R-tilde equal zero trac}  Let $\Omega$ be  a bounded $C^1$-domain in $\R^n , n \ge 2  $. If   $1<p<\infty$, $1/p <\alpha<1+ 1/p$, and $1 \le q \le \infty$,  then
$R_\Omega  ( \widetilde{L}_\alpha^{p,q}(\overline{\Omega} )) = L_{\alpha, 0}^{p,q}(\Omega )$. In addition, if $ q<\infty$, then
$\widetilde{L}_\alpha^{p,q}(\Omega ) = L_{\alpha, 0}^{p,q}(\Omega )$.
\end{lem}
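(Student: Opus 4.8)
The plan is to obtain the identity $R_\Omega(\widetilde{L}_\alpha^{p,q}(\overline{\Omega})) = L_{\alpha,0}^{p,q}(\Omega)$ by exhibiting both sides as one and the same real interpolation space of a compatible pair of $L^{p_i}$-based Sobolev spaces with two nearby exponents $p_0,p_1$, and then invoking the already-established diagonal case $\widetilde{L}_\alpha^{p}(\Omega) = L_{\alpha,0}^{p}(\Omega)$ recorded in (\ref{hat-Lpalpha}). Thus the whole argument reduces to a short chain of three identifications; there is essentially no hard step, and the only point that needs a little care is the bookkeeping on the auxiliary exponents.

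Concretely, I would fix $1<p<\infty$, $1/p<\alpha<1+1/p$, and $1\le q\le\infty$, and choose $1<p_0<p<p_1<\infty$ together with $\theta\in(0,1)$ satisfying $1/p=(1-\theta)/p_0+\theta/p_1$, taking $p_0$ close enough to $p$ from below and $p_1$ close enough to $p$ from above that $1/p_0<\alpha$ and $\alpha<1+1/p_1$; such a choice exists precisely because $1/p<\alpha$ leaves room below and $\alpha<1+1/p$ leaves room above. With this choice one automatically has $1/p_i<\alpha<1+1/p_i$ for $i=0,1$ (indeed $1/p_1<1/p<\alpha<1+1/p<1+1/p_0$), so (\ref{hat-Lpalpha}) is applicable at each endpoint, and the hypotheses of Lemma \ref{Interpolatoin of data-solution} (i) (in particular $1/p_0<\alpha<1+1/p_1$, and $\Omega$ being a bounded $C^1$-domain) are met. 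Then I would write
\[
R_\Omega\bigl(\widetilde{L}_\alpha^{p,q}(\overline{\Omega})\bigr) = \bigl(\widetilde{L}_\alpha^{p_0}(\Omega),\widetilde{L}_\alpha^{p_1}(\Omega)\bigr)_{\theta,q} = \bigl(L_{\alpha,0}^{p_0}(\Omega),L_{\alpha,0}^{p_1}(\Omega)\bigr)_{\theta,q} = L_{\alpha,0}^{p,q}(\Omega),
\]
where the first equality is Lemma \ref{real-interp-S on domains} (i), the middle one is (\ref{hat-Lpalpha}) applied with $p=p_0$ and with $p=p_1$, and the last is Lemma \ref{Interpolatoin of data-solution} (i). For the additional claim, when $q<\infty$ Lemma \ref{real-interp-S on domains} (ii) gives $\widetilde{L}_\alpha^{p,q}(\Omega)=R_\Omega(\widetilde{L}_\alpha^{p,q}(\overline{\Omega}))$, and combining this with the displayed chain yields $\widetilde{L}_\alpha^{p,q}(\Omega)=L_{\alpha,0}^{p,q}(\Omega)$.

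I do not expect any genuine obstacle: the proof is purely a matter of citing the right interpolation statements and verifying the strict inequalities on $p_0,p_1$. The one thing worth flagging is that the restriction $1/p<\alpha<1+1/p$ is used in exactly one place — to guarantee that $p_0$ and $p_1$ can be picked so that both (\ref{hat-Lpalpha}) and Lemma \ref{Interpolatoin of data-solution} (i) apply at the two endpoints — and that outside this range of $\alpha$ the diagonal identity $\widetilde{L}_\alpha^{p}(\Omega)=L_{\alpha,0}^{p}(\Omega)$ itself can fail, so the chain would break.
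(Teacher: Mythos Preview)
Your proposal is correct and matches the paper's own approach exactly: the paper states the lemma follows ``immediately from Lemma \ref{real-interp-S on domains}, (\ref{hat-Lpalpha}), and Lemma \ref{Interpolatoin of data-solution},'' which is precisely the chain of identifications you wrote out, with the same choice of auxiliary exponents $p_0<p<p_1$ satisfying $1/p_0<\alpha<1+1/p_1$. Your bookkeeping on the exponents and the handling of the $q<\infty$ case via Lemma \ref{real-interp-S on domains} (ii) are also in order.
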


We are ready to prove  the following  unique solvability results for the Poisson equation  with  data in $L_{\alpha-2}^{p,q}(\Omega )$, where $1/p< \alpha \le 2$.

\begin{thm}\label{Poisson-FSL spaces}
\begin{enumerate}[{\upshape(i)}]
\item  Let $\Omega$ be a bounded $C^1$-domain in $\R^n , n \ge 2  $. Assume that
$1<p<\infty$, $1/p <\alpha<1+ 1/p$, and $1 \le q \le \infty$.
Then for every $f\in L_{\alpha-2}^{p,q}(\Omega )$, there exists a unique solution $u\in L_{\alpha, 0}^{p,q}(\Omega )$ of (\ref{eq:D-1}).  Moreover,
\[
\|u\|_{L_{\alpha}^{p,q}(\Omega )} \le C \|f\|_{L_{\alpha-2}^{p,q}(\Omega )}
\]
for some $C=C(n,\alpha ,p, q, \Omega )$.
\item Let $\Omega$ be a bounded $C^{1,1}$-domain in $\R^n  , n \ge 2$. Assume that $1<p<\infty$, $ 1   \le \alpha \le 2$, and $1 \le q \le \infty$.
Then for every $f\in L_{\alpha-2}^{p,q}(\Omega )$, there exists a unique solution $u\in L_{1 , 0}^{p,q}(\Omega ) \cap L_{\alpha}^{p,q}(\Omega )$ of (\ref{eq:D-1}).  Moreover,
\[
\|u\|_{L_{\alpha}^{p,q}(\Omega )} \le C \|f\|_{L_{\alpha-2}^{p,q}(\Omega )}
\]
for some $C=C(n,\alpha ,p, q, \Omega )$.
\end{enumerate}
\end{thm}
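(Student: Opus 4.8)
The plan is to deduce both parts by real interpolation from the corresponding statements on the diagonal Lebesgue scale, namely Lemma~\ref{Poisson-S spaces}, using Lemma~\ref{real-interp-S on domains}~(i) to interpolate the data spaces and Lemma~\ref{Interpolatoin of data-solution} to interpolate the solution spaces. For part~(i), fix $1<p<\infty$, $1/p<\alpha<1+1/p$, and $1\le q\le\infty$; since both inequalities $1/p<\alpha$ and $\alpha<1+1/p$ are strict, I can choose $1<p_0<p<p_1<\infty$ and $0<\theta<1$ with $1/p_0<\alpha<1+1/p_1$ and $1/p=(1-\theta)/p_0+\theta/p_1$. For part~(ii), fix $1<p<\infty$, $1\le\alpha\le2$, and $1\le q\le\infty$, and pick any $1<p_0\neq p_1<\infty$ with $1/p=(1-\theta)/p_0+\theta/p_1$ for some $0<\theta<1$; no relation between $\alpha$ and $p_0,p_1$ is required here because Lemma~\ref{Poisson-S spaces}~(ii) is valid for every $1<p_i<\infty$.

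Next I would assemble a single solution operator. By Lemma~\ref{Poisson-S spaces}, for $i=0,1$ there is a bounded linear operator $T_i$ sending $f\in L_{\alpha-2}^{p_i}(\Omega)$ to the unique solution of \eqref{eq:D-1} in $L_{\alpha,0}^{p_i}(\Omega)$ (part~(i)), resp.\ in $L_{1,0}^{p_i}(\Omega)\cap L_\alpha^{p_i}(\Omega)$ (part~(ii)). Since $\Omega$ is bounded, Lemma~\ref{properties of SL on domains}~(ii) gives $L_\alpha^{p_1}(\Omega)\hookrightarrow L_\alpha^{p_0}(\Omega)$ and, together with consistency of the trace operators, shows that a solution in the higher-exponent class also lies in the lower-exponent one; uniqueness in Lemma~\ref{Poisson-S spaces} then forces $T_0$ and $T_1$ to agree on $L_{\alpha-2}^{p_0}(\Omega)\cap L_{\alpha-2}^{p_1}(\Omega)$, so there is a well-defined linear $T$ on the sum space with $T|_{L_{\alpha-2}^{p_i}(\Omega)}=T_i$. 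Because $\alpha<2$ in part~(i) and $\alpha\le2$ in part~(ii), the data space $L_{\alpha-2}^{p,q}(\Omega)$ is either a genuine negative-order space, handled by Lemma~\ref{real-interp-S on domains}~(i), or the Lorentz space $L^{p,q}(\Omega)$ when $\alpha=2$, handled by \eqref{interpolation for L}; in either case $L_{\alpha-2}^{p,q}(\Omega)=(L_{\alpha-2}^{p_0}(\Omega),L_{\alpha-2}^{p_1}(\Omega))_{\theta,q}$. Applying real interpolation to $T$ and identifying the target via Lemma~\ref{Interpolatoin of data-solution}~(i) (resp.\ (ii)) yields that $T$ is bounded from $L_{\alpha-2}^{p,q}(\Omega)$ into $L_{\alpha,0}^{p,q}(\Omega)$ (resp.\ into $L_{1,0}^{p,q}(\Omega)\cap L_\alpha^{p,q}(\Omega)$), which gives existence and the stated a priori estimate.

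For uniqueness it suffices to show that a solution $u$ of \eqref{eq:D-1} in the relevant zero-trace class with $f=0$ vanishes. By Lemma~\ref{properties of SL on domains}~(ii), $L_\alpha^{p,q}(\Omega)\hookrightarrow L_\alpha^{p_0}(\Omega)$ since $p_0<p$, and since the trace operators on the two scales agree on the dense subspace $C^\infty(\overline{\Omega})$, the condition ${\rm Tr}\,u=0$ persists in $L^{p_0}(\partial\Omega)$; thus $u\in L_{\alpha,0}^{p_0}(\Omega)$ in part~(i), and $u\in L_{1,0}^{p_0}(\Omega)$ in part~(ii) (using $\alpha\ge1$, so $1/p_0<1\le\alpha$ makes the relevant trace available). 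The uniqueness assertion in Lemma~\ref{Poisson-S spaces} then forces $u=0$.

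The only genuine subtlety I expect is the consistency bookkeeping: verifying that the solution operators on the various Lebesgue scales restrict to one another so that real interpolation is applied to one operator, and the parallel consistency of the trace operators used in the uniqueness step. Everything substantive — the base solvability at $\alpha=1$ on $C^1$-domains and the Calder\'on--Zygmund input at $\alpha=2$ on $C^{1,1}$-domains — is already packaged into Lemma~\ref{Poisson-S spaces}, and the index restrictions in part~(i) are precisely what leaves room to insert the auxiliary exponents $p_0,p_1$.
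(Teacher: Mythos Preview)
Your proposal is correct and follows essentially the same approach as the paper: pick auxiliary exponents $p_0,p_1$ straddling $p$ (with $1/p_0<\alpha<1+1/p_1$ in part~(i)), interpolate the solution operator from Lemma~\ref{Poisson-S spaces} using Lemma~\ref{real-interp-S on domains}~(i) on the data side and Lemma~\ref{Interpolatoin of data-solution} on the target side, and deduce uniqueness by embedding into $L_{\alpha,0}^{p_0}(\Omega)$ (resp.\ $L_{1,0}^{p_0}(\Omega)$). The paper is terser, invoking ``the proof of Lemma~\ref{Interpolatoin of data-solution}'' to cover the boundedness step, but the content is the same; your explicit consistency check for $T_0=T_1$ on the intersection and the separate handling of the $\alpha=2$ case via~\eqref{interpolation for L} are minor clarifications of points the paper leaves implicit.
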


\begin{proof}
Suppose that $\Omega$ is of class  $C^1$, $1<p<\infty$, $1/p <\alpha<1+ 1/p$, and $1 \le q \le \infty$. Choose $1<p_0 < p_1 <\infty$ and  $0< \theta < 1$ such that  $1/p=(1-\theta)/p_0 +\theta/p_1$ and  $1/p_0 <\alpha<1+ 1/p_1$.
Let   $T : L_{\alpha-2}^{p_0} (\Omega ) \to L_{\alpha,0}^{p_0} (\Omega )$ be the    solution operator for (\ref{eq:D-1}). Then by the proof of Lemma \ref{Interpolatoin of data-solution} (i),  $T$   is bounded from $L_{\alpha-2}^{p,q}(\Omega )$ into $L_{\alpha, 0}^{p,q}(\Omega )$, which  proves the existence   assertion and estimate of Part (i). The uniqueness follows from Lemma \ref{Poisson-S spaces} (i). This proves Part (i).  Part (ii) is similarly proved by using the proof of Lemma \ref{Interpolatoin of data-solution} (ii).
\end{proof}

\subsection{Solvability   for measure data}

Let $\Omega$ be a bounded domain in $\R^n$.
We denote by $\mathcal{M}  (\Omega )$ the space  of all signed  Radon measures $\mu$ on $\Omega$ with finite total variation $  |\mu |(\Omega )$. Recall (see, e.g.,  \cite[Chapter 7]{Folland}) that
\begin{enumerate}[{\upshape(i)}]
\item  $\mathcal{M}  (\Omega )$ is a Banach space equipped with the norm
$\|\mu \|_{\mathcal{M}  (\Omega )}=|\mu |(\Omega )$;
\item $L^1 (\Omega ) \hookrightarrow \mathcal{M}  (\Omega )$: more precisely, the mapping $f \mapsto \mu_f$, defined by
\[
\mu_f (A) =\int_A f \, dx \quad\mbox{for all Borel sets}\,\,A \subset \Omega ,
\]
is an isometry from $L^1 (\Omega )$ into  $\mathcal{M}  (\Omega )$;
\item $  \mathcal{M}  (\Omega )$ is the dual space of
$C_0 (\Omega )$: more precisely, the mapping $\mu \mapsto f_\mu$, defined by
\[
\langle f_\mu ,\phi \rangle =\int_\Omega \phi \, d\mu \quad\mbox{for all}\,\,\phi \in C_0 (\Omega),
\]
is an isometric isomorphism from  $\mathcal{M}  (\Omega )$ onto $\left[ C_0 (\Omega ) \right]^*$.
\end{enumerate}

\medskip
Recall from  (\ref{def-pa}) that
\[
p(\alpha) =\frac{n}{n-1+\alpha} \quad\mbox{for}\,\,  0 \le   \alpha < 1.
\]
Note that $1< p(\alpha) \le  n/(n-1)$ and
$\beta -\alpha  = n/p(\beta) -n/p(\alpha)  >0$ if $0 \le  \alpha <  \beta  < 1$.
Hence it follows from Lemmas \ref{properties of SL on domains} and \ref{embedding of L into B on domain} that if $0 \le  \alpha <  \beta  < 1$ and $\gamma \ge 0$, then
\begin{equation}\label{embedding of L into L and B}
L_{\beta+\gamma}^{p(\beta),\infty}(\Omega )\hookrightarrow L_{\alpha+\gamma}^{p(\alpha),\infty}(\Omega ) \cap B_{\alpha+\gamma}^{p(\alpha),\infty}(\Omega ).
\end{equation}

\begin{lem}\label{embedding of M} Let $\Omega$ be a  bounded Lipschitz domain in $\R^n$. Then for $0 \le  \alpha < 1$, we have
\[
\mathcal{M}  (\Omega ) \hookrightarrow L_{\alpha-1}^{p(\alpha),\infty}(\Omega ) .
\]
\end{lem}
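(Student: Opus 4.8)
The plan is to prove the embedding by duality. Since $L_{\alpha-1}^{p(\alpha),\infty}(\Omega)$ is defined as the dual of $\widetilde{L}_{1-\alpha}^{p(\alpha)',1}(\Omega)$, it suffices to show that every $\mu\in\mathcal{M}(\Omega)$ extends to a bounded linear functional on $\widetilde{L}_{1-\alpha}^{p(\alpha)',1}(\Omega)$ with norm controlled by $\|\mu\|_{\mathcal{M}(\Omega)}=|\mu|(\Omega)$. The natural pairing is
\[
\langle \mu , \phi\rangle = \int_\Omega \phi\, d\mu,
\]
which is certainly well defined for $\phi\in C_c^\infty(\Omega)$, a dense subspace of $\widetilde{L}_{1-\alpha}^{p(\alpha)',1}(\Omega)$ (by definition of the tilde space as the closure of $C_c^\infty(\Omega)$ in $R_\Omega(\widetilde{L}_{1-\alpha}^{p(\alpha)',1}(\overline\Omega))$; note $1\le 1 <\infty$ so the closure is proper only in the norm sense, but $C_c^\infty(\Omega)$ is dense there by construction). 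So the whole proof reduces to the a priori bound
\[
\left| \int_\Omega \phi\, d\mu \right| \le |\mu|(\Omega)\, \|\phi\|_{L^\infty(\Omega)} \le C\, |\mu|(\Omega)\, \|\phi\|_{\widetilde{L}_{1-\alpha}^{p(\alpha)',1}(\Omega)}
\quad\text{for all } \phi\in C_c^\infty(\Omega).
\]

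The first inequality above is immediate from the definition of total variation. The second is an $L^\infty$–embedding estimate, and this is where the hypothesis $0\le\alpha<1$ and the precise exponent $p(\alpha)=n/(n-1+\alpha)$ enter. I would compute the conjugate: $p(\alpha)' = n/(1-\alpha)$, so the smoothness order $1-\alpha$ and the integrability $p(\alpha)'$ satisfy $(1-\alpha)\cdot p(\alpha)' = (1-\alpha)\cdot n/(1-\alpha) = n$, i.e. we are exactly on the borderline $sp=n$ of the Sobolev embedding. By Lemma \ref{properties of SL on domains} (iv), for $\alpha p = n$ one has $\widetilde{L}_{\alpha}^{p,1}(\Omega)\hookrightarrow C_0(\Omega)$; applying this with the pair $(1-\alpha,\,p(\alpha)')$ in place of $(\alpha,p)$ gives precisely
\[
\widetilde{L}_{1-\alpha}^{p(\alpha)',1}(\Omega)\hookrightarrow C_0(\Omega),
\]
hence $\|\phi\|_{L^\infty(\Omega)}\le C\|\phi\|_{\widetilde{L}_{1-\alpha}^{p(\alpha)',1}(\Omega)}$ with $C=C(n,\Omega)$. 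This supplies the missing bound and the functional $\phi\mapsto\int_\Omega\phi\,d\mu$ extends by density to all of $\widetilde{L}_{1-\alpha}^{p(\alpha)',1}(\Omega)$, defining an element of $L_{\alpha-1}^{p(\alpha),\infty}(\Omega)$ of norm $\le C|\mu|(\Omega)$. One should also check consistency: the resulting distribution on $\Omega$ agrees with $\mu$, which is clear since the two pairings coincide on $C_c^\infty(\Omega)$.

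I do not expect a serious obstacle here — the argument is essentially bookkeeping once one notices the exponents land on the critical Sobolev line. The one point requiring a little care is that Lemma \ref{properties of SL on domains} (iv) must be applicable with $q=1$ and with the "critical" smoothness $1-\alpha$, which is fine since $0<1-\alpha\le 1$ and the lemma is stated for all $\alpha>0$, $1<p<\infty$; another is ensuring $p(\alpha)'\in(1,\infty)$, which holds because $1<p(\alpha)\le n/(n-1)$ gives $n\le p(\alpha)'<\infty$, hence in particular $p(\alpha)'>1$. The case $\alpha=0$ (giving $p(0)=n/(n-1)$, $p(0)'=n$, smoothness $1$) is the standard statement $\mathcal{M}(\Omega)\hookrightarrow L_{-1}^{n/(n-1),\infty}(\Omega)$ already announced in the introduction, and it is covered by the same computation.
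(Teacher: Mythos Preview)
Your proposal is correct and follows essentially the same approach as the paper: both identify $(1-\alpha)\,p(\alpha)'=n$, invoke Lemma~\ref{properties of SL on domains}\,(iv) to get $\widetilde{L}_{1-\alpha}^{p(\alpha)',1}(\Omega)\hookrightarrow C_0(\Omega)$, and then bound $\int_\Omega\phi\,d\mu$ by $\|\phi\|_{L^\infty}\|\mu\|_{\mathcal{M}(\Omega)}$ to conclude by duality. The only cosmetic difference is that the paper applies the estimate directly to all $\phi\in\widetilde{L}_{1-\alpha}^{p(\alpha)',1}(\Omega)$ via the identification $\mathcal{M}(\Omega)=[C_0(\Omega)]^*$, whereas you first restrict to $\phi\in C_c^\infty(\Omega)$ and extend by density; the content is the same.
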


\begin{proof} Let $ \mu\in \mathcal{M}  (\Omega )$ be given.
Then since
$(1-\alpha ) p(\alpha) ' =n$, it follows from Lemma \ref{properties of SL on domains} (iv) that
$
\widetilde{L}_{1-\alpha}^{p(\alpha)',1}(\Omega ) \hookrightarrow C_0 (\Omega )$.
Hence for all $\phi \in \widetilde{L}_{1-\alpha}^{p(\alpha)',1}(\Omega ) $, we have
\[
\langle f_\mu , \phi \rangle = \int_\Omega \phi \, d \mu \le \|\phi \|_{L^\infty (\Omega )}  \|\mu \|_{\mathcal{M}  (\Omega )} \lesssim  \|\phi \|_{\widetilde{L}_{1-\alpha}^{p(\alpha)',1}(\Omega )}  \|\mu \|_{\mathcal{M}  (\Omega )},
\]
which implies that
$$
f_\mu \in L_{\alpha-1}^{p(\alpha),\infty}(\Omega ) = \left[\widetilde{L}_{1-\alpha}^{p(\alpha)',1}(\Omega )\right]^*  \quad\mbox{and}\quad
 \|f_\mu \|_{L_{\alpha-1}^{p(\alpha),\infty}(\Omega )}\lesssim  \|\mu \|_{\mathcal{M}  (\Omega )} .
$$
This completes the proof.
\end{proof}

\begin{thm}\label{Poisson-measure data}
Let $\Omega$ be a bounded $C^1$-domain in $\R^n , n \ge 2 $.
Then for every  $ f\in L_{-1}^{n/(n-1),\infty} (\Omega )$, there exists a unique solution  $u\in L_{1, 0}^{n/(n-1),\infty}(\Omega )$  of (\ref{eq:D-1}), which satisfies
\begin{equation}\label{est-u-L1p}
\|u\|_{L_{1}^{n/(n-1),\infty}(\Omega )}  \le C \|f \|_{L_{-1}^{n/(n-1),\infty} (\Omega )}
\end{equation}
for some $C=C(n,\Omega )$. In addition, if $ f\in \mathcal{M}  (\Omega )$, then for every $0 \le   \alpha <1$,
\[
\widetilde{u}  \in L_{\alpha+1}^{p(\alpha) ,\infty}(\R^n  )\cap B_{\alpha+1}^{p(\alpha) ,\infty}(\R^n  ) \quad\mbox{and}\quad  \|\widetilde{u}  \|_{L_{\alpha+1}^{p(\alpha),\infty}(\R^n )\cap B_{\alpha+1}^{p(\alpha),\infty}(\R^n )} \le C \|f \|_{\mathcal{M}  (\Omega )},
\]
where  $C=C(n,\alpha ,  \Omega )$. Here $\widetilde{u}$ denotes the zero extension of $u$ to $\R^n$.
\end{thm}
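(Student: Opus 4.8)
The plan is to derive both assertions from the solvability result Theorem~\ref{Poisson-FSL spaces}(i) together with the embedding lemmas of Section~3; the only genuine work is a short consistency argument.

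For the first assertion I would simply apply Theorem~\ref{Poisson-FSL spaces}(i) with $p=n/(n-1)$, Sobolev order $1$, and $q=\infty$: the index condition holds because $1/p=(n-1)/n<1<(2n-1)/n=1+1/p$ for $n\ge2$, and there the data space is $L_{1-2}^{n/(n-1),\infty}(\Omega)=L_{-1}^{n/(n-1),\infty}(\Omega)$, so one obtains at once the unique $u\in L_{1,0}^{n/(n-1),\infty}(\Omega)$ and the estimate (\ref{est-u-L1p}).

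For the improved regularity, fix $f\in\mathcal M(\Omega)$ and $0\le\alpha<1$. The first step is Lemma~\ref{embedding of M}, which gives $f\in L_{\alpha-1}^{p(\alpha),\infty}(\Omega)$ with norm $\lesssim\|f\|_{\mathcal M(\Omega)}$; using $1/p(\alpha)=(n-1+\alpha)/n$ one checks the elementary chain $1/p(\alpha)<1<\alpha+1<1+1/p(\alpha)$ for $0\le\alpha<1$. Next I would invoke Theorem~\ref{Poisson-FSL spaces}(i) again, with $p=p(\alpha)$, Sobolev order $\alpha+1$, and $q=\infty$, applied to $f\in L_{(\alpha+1)-2}^{p(\alpha),\infty}(\Omega)=L_{\alpha-1}^{p(\alpha),\infty}(\Omega)$; this produces a unique $u_\alpha\in L_{\alpha+1,0}^{p(\alpha),\infty}(\Omega)$ with $\|u_\alpha\|_{L_{\alpha+1}^{p(\alpha),\infty}(\Omega)}\lesssim\|f\|_{\mathcal M(\Omega)}$. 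The crux is then to identify $u_\alpha$ with the $u$ of the first part. Since $p(\alpha)\le n/(n-1)$ and $\alpha+1\ge1$, Lemma~\ref{properties of SL on domains}(i) and (ii) embed both $u$ and $u_\alpha$ into $L_{1,0}^{p(\alpha),\infty}(\Omega)$, and each solves (\ref{eq:D-1}) with the same datum $f\in L_{-1}^{p(\alpha),\infty}(\Omega)$ (using $L_{\alpha-1}^{p(\alpha),\infty}(\Omega)\hookrightarrow L_{-1}^{p(\alpha),\infty}(\Omega)$ since $\alpha\ge0$, and that $-\Delta$ is compatible with these embeddings). Because $1/p(\alpha)<1<1+1/p(\alpha)$, the uniqueness part of Theorem~\ref{Poisson-FSL spaces}(i) with order $1$ forces $u=u_\alpha$, so $u\in L_{\alpha+1,0}^{p(\alpha),\infty}(\Omega)$ with the bound above. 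Finally, Lemma~\ref{R-tilde equal zero trac} (whose index hypothesis $1/p(\alpha)<\alpha+1<1+1/p(\alpha)$ was just checked) identifies $L_{\alpha+1,0}^{p(\alpha),\infty}(\Omega)$ with $R_\Omega(\widetilde L_{\alpha+1}^{p(\alpha),\infty}(\overline\Omega))$, giving $\widetilde u\in L_{\alpha+1}^{p(\alpha),\infty}(\R^n)$ with $\|\widetilde u\|_{L_{\alpha+1}^{p(\alpha),\infty}(\R^n)}\lesssim\|f\|_{\mathcal M(\Omega)}$.

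For the Besov membership, given $0\le\alpha<1$ I would pick any $\beta\in(\alpha,1)$, apply the preceding paragraph with $\beta$ in place of $\alpha$ to get $\widetilde u\in L_{\beta+1}^{p(\beta),\infty}(\R^n)$ with norm $\lesssim\|f\|_{\mathcal M(\Omega)}$, and then use Lemma~\ref{embedding of L into B on domain}: since $(\beta+1)-(\alpha+1)=\beta-\alpha=n/p(\beta)-n/p(\alpha)>0$, it gives $L_{\beta+1}^{p(\beta),\infty}(\R^n)\hookrightarrow B_{\alpha+1}^{p(\alpha),\infty}(\R^n)$, hence $\widetilde u\in B_{\alpha+1}^{p(\alpha),\infty}(\R^n)$ with the required bound (and $\widetilde u\in L_{\alpha+1}^{p(\alpha),\infty}(\R^n)$ follows again from Lemma~\ref{properties of SL on Rn}(ii)). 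I expect the one nontrivial point to be the identification $u=u_\alpha$: one must locate a single Sobolev-Lorentz space containing both the measure-data solution and its higher-order counterpart in which uniqueness is already available, and $L_{1,0}^{p(\alpha),\infty}(\Omega)$ serves precisely because $\alpha<1$ makes $1/p(\alpha)<1$, rendering it admissible for Theorem~\ref{Poisson-FSL spaces}(i). Everything else reduces to checking index conditions and invoking the embeddings.
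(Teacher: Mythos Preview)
Your proof is correct and follows essentially the same route as the paper: apply Theorem~\ref{Poisson-FSL spaces}(i) at each level $\alpha$, identify the resulting solutions with $u$ via a uniqueness argument, pass to the zero extension using Lemma~\ref{R-tilde equal zero trac}, and reach the Besov space by going through an auxiliary level $\beta>\alpha$ and Lemma~\ref{embedding of L into B on domain}. The only tactical difference lies in the identification step: the paper embeds $u_\beta\in L_{\beta+1,0}^{p(\beta),\infty}(\Omega)$ \emph{upward} into the original space $L_{1,0}^{n/(n-1),\infty}(\Omega)$ via the Sobolev-type embedding (\ref{embedding of L into L and B}) and applies uniqueness there, whereas you embed $u$ \emph{downward} into $L_{1,0}^{p(\alpha),\infty}(\Omega)$ using Lemma~\ref{properties of SL on domains}(ii) and apply uniqueness in that larger space. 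Both choices of common space satisfy the index condition $1/p<1<1+1/p$ of Theorem~\ref{Poisson-FSL spaces}(i) precisely because $\alpha<1$, so either works; your version has the mild advantage of yielding $u\in L_{\alpha+1,0}^{p(\alpha),\infty}(\Omega)$ directly without detouring through $\beta$, though you still need $\beta$ for the Besov conclusion, just as the paper does.
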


\begin{proof}  Let $ f\in L_{-1}^{n/(n-1),\infty} (\Omega )$ be given. Then it immediately  follows from   Theorem \ref{Poisson-FSL spaces} (i) that  there exists   a unique solution  $u\in L_{1, 0}^{n/(n-1),\infty}(\Omega )$  of (\ref{eq:D-1}), which satisfies the estimate  (\ref{est-u-L1p}).

Suppose in addition that  $ f\in \mathcal{M}  (\Omega )$.
Note that if $0 \le    \alpha <1$, then  $1< p(\alpha)<\infty$ and $1/p(\alpha) < \alpha +1 < 1+1/p(\alpha)$. Hence it follows from Lemma~\ref{embedding of M} and Theorem \ref{Poisson-FSL spaces} (i) that for every  $0 \le  \alpha <1$, there exists a unique solution $u_\alpha \in L_{\alpha+1,0}^{p(\alpha) ,\infty}(\Omega )$ of (\ref{eq:D-1}), which satisfies
\begin{equation}\label{est-u-alpha}
\|u_\alpha \|_{L_{\alpha+1}^{p(\alpha),\infty}(\Omega )} \lesssim \|f \|_{L_{\alpha-1}^{p(\alpha),\infty}(\Omega )} \lesssim  \|f \|_{\mathcal{M}  (\Omega )} .
\end{equation}
Let   $0 \le   \alpha <1$ be fixed. Choose any $\beta$ such that
 $  \alpha   <  \beta  < 1$. Then by (\ref{embedding of L into L and B}), we have
\[
u_\beta \in L_{\beta+1,0}^{p(\beta),\infty}(\Omega )\hookrightarrow L_{1,0}^{n/(n-1),\infty}(\Omega )   .
\]
Since  $u   , u_\beta     \in L_{1,0}^{n/(n-1),\infty}(\Omega )$,
it follows from the uniqueness assertion of Theorem \ref{Poisson-FSL spaces} (i) that $u  =  u_\beta  $. Moreover, since $1/p(\beta)< \beta+1 < 1+1/
p(\beta)$,   it follows from Lemma \ref{R-tilde equal zero trac} that
$$
u  \in L_{\beta+1,0}^{p(\beta),\infty}(\Omega ) =  R_\Omega  ( \widetilde{L}_{\beta+1}^{p(\beta),\infty}(\overline{\Omega} )).
$$
Hence
$$
\widetilde{u}  \in L_{\beta+1}^{p(\beta) ,\infty}(\R^n  )
\quad\mbox{and}\quad \|\widetilde{u}  \|_{L_{\beta+1}^{p(\beta) ,\infty}(\R^n  )} \approx \|u   \|_{L_{\beta+1}^{p(\beta),\infty}(\Omega ) }.
$$
Therefore, by Lemma \ref{properties of SL on Rn} (ii),  Lemma \ref{embedding of L into B on domain}, and (\ref{est-u-alpha}), we have
\[
\widetilde{u}  \in L_{\alpha+1}^{p(\alpha) ,\infty}(\R^n  )\cap B_{\alpha+1}^{p(\alpha) ,\infty}(\R^n  )
\]
and
\[
 \|\widetilde{u}  \|_{L_{\alpha+1}^{p(\alpha),\infty}(\R^n )\cap B_{\alpha+1}^{p(\alpha),\infty}(\R^n )} \lesssim \|\widetilde{u}  \|_{L_{\beta+1}^{p(\beta) ,\infty}(\R^n  )} \approx \|u  \|_{L_{\beta+1}^{p(\beta),\infty}(\Omega )} \lesssim    \|f \|_{\mathcal{M}  (\Omega )}.
\]
This completes the proof of the theorem.
\end{proof}

\begin{rmk}\label{rep of measure as div}
It follows from Theorem  \ref{Poisson-measure data} that every $ f\in \mathcal{M}  (\Omega )$ can be written as
\[
f ={\rm div}\, F \quad\mbox{in}\,\, \Omega ,
\]
where $F =\nabla u$ for some $u$ with its zero extension $\widetilde{u}$ satisfying
\[
 \widetilde{u}   \in L_{\alpha+1}^{p(\alpha) ,\infty}(\R^n  )\cap B_{\alpha+1}^{p(\alpha) ,\infty}(\R^n  )\quad\mbox{for every}\,\, 0 \le \alpha < 1 .
\]
Since $\nabla$ is bounded from $L_{\alpha+1}^{p,q}(\R^n )$ into $L_{\alpha}^{p,q}(\R^n ;\R^n)$ and from $B_{\alpha+1}^{p,q}(\R^n )$ into $B_{\alpha}^{p,q}(\R^n ;\R^n )$  (see, e.g., \cite[(2.13) and (2.14)]{JK}), we have
\[
F  \in L_{\alpha}^{p(\alpha) ,\infty}(\Omega ;\R^n )\cap B_{\alpha}^{p(\alpha) ,\infty}(\Omega ;\R^n   )\quad\mbox{for every}\,\, 0 \le \alpha < 1 .
\]
\end{rmk}

\section{Proofs of Theorems \ref{Poisson-L1 data} and \ref{Poisson-div-L1 data-introd}}

To prove Theorems \ref{Poisson-L1 data} and \ref{Poisson-div-L1 data-introd}, we need to   recall the following results taken from \cite[Theorems 1.1 and 1.3]{KO}.

\begin{thm}\label{existence of solutions}
\begin{enumerate}[{\upshape(i)}]
\item  Let $\Omega$ be a bounded $C^1$-domain in $\R^n   $, $n\ge 2$. Suppose that $b \in L^{n,1}(\Omega ; \R^n )$,    $ c \in L^{n/2,1}(\Omega )\cap L^s (\Omega )$ for some $s>1$, and $c \ge 0$ in $\Omega$.
Then for every   $F \in L^{n/(n-1),\infty}(\Omega ;\R^n )$, there exists a unique weak  solution  $u$  of (\ref{eq:D-1-introd}) with $f={\rm div}\, F$ and $u_D =0$, which satisfies
\[
   \|   u \|_{L_{1}^{n/(n-1),\infty}(\Omega )}  \le C   \|F \|_{L^{n/(n-1),\infty}(\Omega ;\R^n  )}
\]
for some $C=C(n,  s, \Omega ,b,c )$.
\item Let $\Omega$ be a bounded $C^{1,1}$-domain in $\R^n , n \ge 2 $.  Suppose that $b \in L^{n,1}(\Omega ; \R^n )$,    $ c \in L^{n,1}(\Omega )$, and $c \ge 0$ in $\Omega$.
Then for every $ f\in L^{n,1}  (\Omega   )$, there exists a unique  strong solution  $u  $  in $L^{n,1}_2 (\Omega )$ of  (\ref{eq:D-1-introd}) with $u_D =0$, which satisfies
\[
 \|   u \|_{L_2^{n,1}(\Omega ) }  \le C   \|f \|_{L^{n,1}  (\Omega  )}
\]
for some  $C=C(n,  \Omega, b,c )$.
\end{enumerate}
\end{thm}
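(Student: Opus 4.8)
The plan is to reduce each part to the Sobolev--Lorentz solvability of the Poisson equation established in Theorem~\ref{Poisson-FSL spaces} and to treat $b\cdot\nabla u+cu$ as a lower-order perturbation, with $c\ge 0$ supplying the necessary coercivity/maximum-principle structure. Write $\mathcal T$ for the Poisson solution operator with zero boundary data. For (i), Theorem~\ref{Poisson-FSL spaces}(i) with $p=n/(n-1)$, $\alpha=1$, $q=\infty$ gives a bounded $\mathcal T\colon L_{-1}^{n/(n-1),\infty}(\Omega)\to L_{1,0}^{n/(n-1),\infty}(\Omega)$; the datum ${\rm div}\,F$ is admissible because $|\langle{\rm div}\,F,\phi\rangle|=|\int_\Omega F\cdot\nabla\phi|\le\|F\|_{L^{n/(n-1),\infty}(\Omega)}\,\|\phi\|_{\widetilde L_1^{n,1}(\Omega)}$ for $\phi\in C_c^\infty(\Omega)$ by the Lorentz H\"older inequality~(\ref{Holder-L}), and $C_c^\infty(\Omega)$ is dense in $\widetilde L_1^{n,1}(\Omega)$, so ${\rm div}$ maps $L^{n/(n-1),\infty}(\Omega;\R^n)$ boundedly into $L_{-1}^{n/(n-1),\infty}(\Omega)=[\widetilde L_1^{n,1}(\Omega)]^{*}$. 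For (ii), Theorem~\ref{Poisson-FSL spaces}(ii) with $p=n$, $\alpha=2$, $q=1$ gives a bounded $\mathcal T\colon L^{n,1}(\Omega)\to L_{1,0}^{n,1}(\Omega)\cap L_2^{n,1}(\Omega)$.

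Next one checks that the perturbation lands in the data space, using Lemma~\ref{properties of SL on domains} and~(\ref{Holder-L}). In (i), for $u\in L_{1,0}^{n/(n-1),\infty}(\Omega)$ one has $\nabla u\in L^{n/(n-1),\infty}(\Omega;\R^n)$ and, by Lemma~\ref{properties of SL on domains}(iii), $u\in L^{n/(n-2),\infty}(\Omega)$ when $n\ge 3$ (and $u\in L^{r}(\Omega)$ for every $r<\infty$ when $n=2$); hence $b\cdot\nabla u\in L^1(\Omega)$ because $b\in L^{n,1}(\Omega;\R^n)$, and $cu\in L^1(\Omega)$ because $c\in L^{n/2,1}(\Omega)$ (for $n\ge 3$) or $c\in L^s(\Omega)$, $s>1$ (for $n=2$). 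By Lemma~\ref{embedding of M}, $L^1(\Omega)\hookrightarrow L_{-1}^{n/(n-1),\infty}(\Omega)$, and the corresponding bound is linear in $\|u\|_{L_1^{n/(n-1),\infty}(\Omega)}$. In (ii), $u\in L_2^{n,1}(\Omega)\hookrightarrow L_1^{n,1}(\Omega)$ forces $u,\nabla u\in C(\overline\Omega)$ by Lemma~\ref{properties of SL on domains}(iv), so $b\cdot\nabla u,\,cu\in L^{n,1}(\Omega)$.

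Consequently problem~(\ref{eq:D-1-introd}) with $u_D=0$ is equivalent to the fixed-point equation $u=\mathcal T\big(\text{data}-b\cdot\nabla u-cu\big)$ in $L_{1,0}^{n/(n-1),\infty}(\Omega)$, respectively in $L_{1,0}^{n,1}(\Omega)\cap L_2^{n,1}(\Omega)$. For existence I would approximate $b,c$ by smooth $b^{k},c^{k}$ with $c^{k}\ge 0$, solve the classical approximate Dirichlet problems, establish uniform bounds $\|u^{k}\|\le C\|\text{data}\|$ with $C$ depending on $\Omega,b,c$ but not on $k$ --- these are the De Giorgi--Nash/maximum-principle estimates valid for drift in $L^{n,1}$ (equivalently $L^{n}$) together with $c\ge 0$, transferred to $L^{1}$- or measure-type right-hand sides by duality --- and then pass to the limit, the uniqueness below forcing the full sequence to converge. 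Equivalently, one verifies that $u\mapsto\mathcal T(b\cdot\nabla u+cu)$ is a compact operator on the solution space, using the H\"older estimates above together with the compactness of the embedding $L_\alpha^{p,q}(\Omega)\hookrightarrow L^{p,q}(\Omega)$ for $\alpha>0$ and bounded $\Omega$, and applies the Fredholm alternative in combination with uniqueness.

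Uniqueness is the delicate point and is exactly where $c\ge 0$ is essential. In (ii) the solution $u\in L_2^{n,1}(\Omega)\subset W^{2,2}(\Omega)\cap W_0^{1,2}(\Omega)$ is regular enough to be tested against itself: splitting $b=b_1+b_2$ with $\|b_1\|_{L^{n,1}(\Omega)}$ small and $b_2\in L^\infty(\Omega)$, the Sobolev embedding $W_0^{1,2}(\Omega)\hookrightarrow L^{2n/(n-2),2}(\Omega)$ (with the usual modification when $n=2$) lets one absorb $\int_\Omega(b_1\cdot\nabla u)u$, while $\int_\Omega(b_2\cdot\nabla u)u$ is controlled in the standard way and $\int_\Omega cu^2\ge 0$, so that $u\equiv 0$. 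In (i) the solution lies only in $L_{1,0}^{n/(n-1),\infty}(\Omega)$, strictly below the energy class, so self-testing is unavailable; instead, for $g\in C_c^\infty(\Omega)$ one solves the adjoint problem $-\Delta w-{\rm div}(wb)+cw=g$ with zero boundary value in a class in which $w$ (and enough of $\nabla w$) is bounded --- again using $c\ge 0$ and the elliptic regularity theory on the $C^1$ domain --- and pairs it with $u$ to obtain $\int_\Omega ug=0$, whence $u\equiv 0$. Producing an adjoint solution of sufficient regularity on a merely $C^1$ domain, so that the pairing with the very weak $u$ is legitimate, is the step I expect to be the main obstacle; the full details are in~\cite{KO}.
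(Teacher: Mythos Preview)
The paper does not prove this theorem at all: it is stated as a result quoted verbatim from \cite[Theorems~1.1 and~1.3]{KO}, with no argument given. There is therefore no ``paper's own proof'' to compare your proposal against. Your sketch is a reasonable outline of the strategy behind such results, and you yourself acknowledge at the end that the details live in~\cite{KO}; in that sense your write-up is consistent with what the paper does, namely defer to that reference.

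That said, one step in your sketch is genuinely incomplete. In the uniqueness argument for~(ii) you propose testing the homogeneous equation against $u$ itself, splitting $b=b_1+b_2$ with $\|b_1\|_{L^{n,1}}$ small and $b_2\in L^\infty$, and absorbing the $b_1$-term via Sobolev. The problem is the $b_2$-term: one only gets
\[
\Big|\int_\Omega (b_2\cdot\nabla u)\,u\,dx\Big|\le \|b_2\|_{L^\infty}\|\nabla u\|_{L^2}\|u\|_{L^2}\le C_P\,\|b_2\|_{L^\infty}\|\nabla u\|_{L^2}^2,
\]
and there is no reason for $C_P\|b_2\|_{L^\infty}$ to be small, so this term cannot be absorbed and the energy identity does not force $u\equiv 0$. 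Uniqueness for drifts merely in $L^{n,1}$ (or $L^n$) with $c\ge 0$ is typically obtained instead via the Aleksandrov maximum principle for strong solutions in $W^{2,n}$, not by energy methods; since $L_2^{n,1}(\Omega)\hookrightarrow W^{2,n}(\Omega)$ on a bounded domain, that route is available here and is what underlies the result in~\cite{KO}. Your duality approach for~(i) is the right idea, and you correctly flag that constructing the adjoint solution with enough regularity on a $C^1$ domain is the nontrivial ingredient.
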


\subsection{Proof  of Theorem  \ref{Poisson-L1 data}}

Let $f \in L_{-1}^{n/(n-1),\infty}(\Omega )$ and $u_D \in \mathcal B^{n/(n-1),\infty}_{1/n}(\partial \Omega )$ be given. By Lemma \ref{trace results-1} (i), there exists  $u_1 \in L_{1}^{n/(n-1),\infty}(\Omega )$ such that
\[
{\rm Tr}\, u_1 = u_D \quad\mbox{and}\quad \|u_1 \|_{L_{1}^{n/(n-1),\infty}(\Omega )} \lesssim  \|u_D \|_{\mathcal B^{n/(n-1),\infty}_{1/n}(\partial \Omega )}.
\]
If $f_1 = f+  \Delta u_1 -b\cdot \nabla u_1 -cu_1$, then by H\"{o}lder's    inequality (\ref{Holder-L}), Lemma \ref{properties of SL on domains} (iii), and Lemma \ref{embedding of M},
\[
f_1 \in L_{-1}^{n/(n-1),\infty}(\Omega )\quad\mbox{and}\quad \|f_1 \|_{L_{-1}^{n/(n-1),\infty}(\Omega )} \lesssim   \|f\|_{L_{-1}^{n/(n-1),\infty}(\Omega )} + \|u_1 \|_{L_{1}^{n/(n-1),\infty}(\Omega )}  .
\]
By Theorem \ref{Poisson-measure data}, there exists  $F_1 \in L^{n/(n-1),\infty}(\Omega; \R^n )$  such that
\[
{\rm div}\, F_1 = f_1 \,\,\,\mbox{in}\,\,\Omega \quad\mbox{and}\quad \|F_1 \|_{L^{n/(n-1),\infty}(\Omega; \R^n )}   \lesssim\|f_1 \|_{L_{-1}^{n/(n-1),\infty}(\Omega )}.
\]
Hence by Theorem \ref{existence of solutions} (i),
there exists a unique   $u_2 \in L_{1, 0}^{n/(n-1),\infty}(\Omega )$ such that
\[
- \Delta u_2 + b \cdot \nabla u_2 +cu_2   = f_1   \quad   \text{in }\Omega.
\]
Moreover, this $u_2$ satisfies
\[
\|u_2 \|_{L_1^{n/(n-1),\infty}(\Omega  )} \lesssim  \|f_1 \|_{L_{-1}^{n/(n-1),\infty}(\Omega )} \lesssim \|f \|_{L_{-1}^{n/(n-1),\infty}(\Omega )} + \|u_D \|_{\mathcal B^{n/(n-1),\infty}_{1/n}(\partial \Omega )} .
\]
It is now easy to check that $u= u_1 +u_2$ is a unique solution in $ L_{1}^{n/(n-1),\infty}(\Omega )$ of (\ref{eq:D-1-introd}) and  satisfies
\[
   \|   u \|_{L_{1}^{n/(n-1),\infty}(\Omega )}  \le C \left( \|f \|_{L_{-1}^{n/(n-1),\infty}(\Omega )} + \|u_D \|_{\mathcal B^{n/(n-1),\infty}_{1/n}(\partial \Omega )} \right)
\]
for some $C=C(n,  s, \Omega ,b,c )$.

Suppose in addition that $ f\in \mathcal{M}  (\Omega )$ and $u_D \in \mathcal B^{p(\alpha ),\infty}_{\alpha +1 -1/p (\alpha )}(\partial \Omega )$ for   $0 <  \alpha <1$. Then $u_1$ can be chosen so that
\[
{\rm Tr}\, u_1 = u_D \quad\mbox{and}\quad \|u_1 \|_{L_{\alpha+1}^{p(\alpha),\infty}(\Omega )} \lesssim  \|u_D \|_{\mathcal B^{p(\alpha ),\infty}_{\alpha +1 -1/p (\alpha )}(\partial \Omega )}.
\]
Since  $f_1 = f+  \Delta u_1 -b\cdot \nabla u_1 -cu_1$, we have
\[
f_1 \in L_{\alpha-1}^{p(\alpha),\infty}(\Omega )\quad\mbox{and}\quad \|f_1 \|_{L_{\alpha-1}^{p(\alpha),\infty}(\Omega )} \lesssim   \|f\|_{L_{\alpha-1}^{p(\alpha),\infty}(\Omega )} + \|u_1 \|_{L_{\alpha+1}^{p(\alpha),\infty}(\Omega )}  .
\]
Moreover, if $f_2 = f_1 -b\cdot \nabla u_2 -cu_2$, then
\[
f_2 \in L_{\alpha-1}^{p(\alpha),\infty}(\Omega )\quad\mbox{and}\quad \|f_2 \|_{L_{\alpha-1}^{p(\alpha),\infty}(\Omega )} \lesssim   \|f_1 \|_{L_{\alpha-1}^{p(\alpha),\infty}(\Omega )} + \|u_2 \|_{L_1^{n/(n-1),\infty}(\Omega  )}  .
\]
Note that
$$
u_2 \in L_{1, 0}^{n/(n-1),\infty}(\Omega )\quad\mbox{and}\quad - \Delta u_2 = f_2 \,\,\,\mbox{in}\,\,\Omega.
$$
Therefore, by Theorem \ref{Poisson-FSL spaces} (i), we deduce that
\[
 u_2 \in L_{\alpha+1,0}^{p(\alpha) ,\infty}(\Omega)  \quad\mbox{and}\quad \| u_2 \|_{L_{\alpha+1}^{p(\alpha),\infty}(\Omega )}  \lesssim \|f_2 \|_{L_{\alpha-1}^{p(\alpha),\infty}(\Omega )}.
\]
This completes the proof of the theorem.
\qed

\subsection{Proof  of Theorem  \ref{Poisson-div-L1 data-introd}}

For the sake of simplicity, we write
$$
\mathcal{B}_\alpha = \mathcal{B}_{\alpha  - 1/p(\alpha)}^{p(\alpha ),\infty}(\partial\Omega ) = \left[ \mathcal{B}_{  1/p(\alpha)-\alpha}^{p(\alpha ) ' ,
1}(\partial\Omega ) \right]^*
$$
for $0 \le \alpha < 1$. Note that if $0 \le \beta <  \alpha < 1$, then $\mathcal{B}_\alpha \hookrightarrow \mathcal{B}_\beta$. Indeed,  since
 $$
(1-\beta)- (1-\alpha) = \frac{n}{p(\beta)'} -\frac{n}{p(\alpha)'} >0 ,
$$
it follows from Lemma \ref{properties of SL on domains} that
\[
\mathcal{B}_{  1/p(\beta)-\beta}^{p(\beta ) ' ,1}(\partial\Omega ) = {\rm Tr} \left(L_{1-\beta}^{p(\beta)' , 1}(\Omega )\right) \hookrightarrow {\rm Tr} \left(L_{1-\alpha}^{p(\alpha)' , 1}(\Omega )\right) =
\mathcal{B}_{  1/p(\alpha)-\alpha}^{p(\alpha ) ' ,1}(\partial\Omega ).
\]

Let $L$ be the differential operator defined by
\[
L \psi  = -\Delta \psi  +b \cdot \nabla \psi + c\psi  .
\]
Then it follows from Theorem \ref{existence of solutions} (ii)  that $L$ is an isomorphism from $L_{1,0}^{n,1}(\Omega )\cap L_2^{n,1}(\Omega)$ onto $L^{n,1}(\Omega )$.
Denote by $L^{-1}$ the inverse of $L$. Then for all $f \in L^{n,1}(\Omega )$,
\[
\|\nabla L^{-1}(f) \|_{C (\overline{\Omega}; \R^n )} \lesssim \|  L^{-1}(f) \|_{L_2^{n,1} (\Omega  )}\lesssim \|f \|_{L^{n,1} (\Omega )}.
\]
Hence given $G \in L^1 (\Omega ; \R^n )$ and  $v_D \in \mathcal{B}_0$, the mapping
\[
 f \ \mapsto \  \left\langle v_D ,   \partial_\nu L^{-1}(f)  \right\rangle - \int_\Omega G \cdot \nabla L^{-1}(f) \, dx
\]
defines a bounded linear functional   on $L^{n,1}(\Omega )$, with its norm being bounded above by $C \left(\|G\|_{L^1 (\Omega ; \R^n )} + \|v_D \|_{\mathcal{B}_0 } \right)$. Therefore, by the duality result (\ref{duality for L}), there exists a unique $v \in L^{n/(n-1),\infty}(\Omega )$, with
$$
\|v\|_{L^{n/(n-1),\infty}(\Omega )} \lesssim  \|G\|_{L^1 (\Omega ; \R^n )}
+ \|v_D \|_{\mathcal{B}_0} ,
$$
such that
\[
\int_\Omega vf \, dx =  \left\langle v_D ,   \partial_\nu L^{-1}(f)   \right\rangle - \int_\Omega G \cdot \nabla L^{-1}(f) \, dx
\]
for all $f\in L^{n,1}(\Omega )$. By the definitions of $L$ and $L^{-1}$, we also have
\[
\int_\Omega v\left( -\Delta \psi  +b \cdot \nabla \psi + c\psi\right)  dx =  \left\langle v_D ,   \partial_\nu \psi  \right\rangle - \int_\Omega G \cdot \nabla \psi  \, dx
\]
for all $\psi \in  L_{1,0}^{n,1}(\Omega )\cap L_2^{n,1}(\Omega)$. This implies, in particular, that $v$ is a   very weak solution   of  (\ref{eq:D-2-introd}) satisfying the desired estimate. Uniqueness of a very weak solution of (\ref{eq:D-2-introd}) can be easily proved by a duality argument based on the  density of $C_0 (\Omega )\cap C^{1,1}(\overline{\Omega})$ in $L_{1,0}^{n,1}(\Omega )\cap L_2^{n,1}(\Omega)$ (see \cite[Lemma 2.10 (iv)]{KO}).

To complete the proof, it remains to prove regularity results for the very weak solution $v$.
First using the result in  Remark \ref{rep of measure as div},
we choose $G_1 \in L^1 (\Omega ; \R^n )$ such that
\[
{\rm div}\, G_1 = cv \,\,\,\mbox{in}\,\,\Omega \quad \mbox{and}\quad \|G_1 \|_{L^1 (\Omega ;\R^n   )} \lesssim  \|c v\|_{L^1 (\Omega  )} \lesssim \|v\|_{L^{n/(n-1),\infty}(\Omega )} .
\]
Then defining $F  = G + v b -G_1 $, we have
\[
\|F \|_{L^1 (\Omega ;\R^n   )} \le C \left(\|G\|_{L^1 (\Omega ; \R^n )}
+ \|v_D \|_{\mathcal{B}_0 }  \right)
\]
for some $C=C(n,\Omega , b,c)$. Note that  $v $  is a very weak solution of the problem
\begin{equation}\label{Poisson with boundary data}
- \Delta v = {\rm div}\, F \,\,\,\mbox{in}\,\, \Omega
\quad\mbox{and}\quad v =v_D \,\,\,\mbox{on}\,\,\partial\Omega.
\end{equation}

Suppose now  that $v_D \in \mathcal{B}_\alpha$ for $0< \alpha < 1$.
Then since   $1 < p(\alpha)' < \infty$ and $ 1< 2-\alpha < 2$, it follows from   Theorem \ref{Poisson-FSL spaces} (ii) that  the solution operator $T$ of (\ref{eq:D-1}) is bounded from $L_{-\alpha}^{p(\alpha)',1}(\Omega )$ into $L_{1,0}^{p(\alpha)',1} (\Omega ) \cap  L_{2-\alpha, 0}^{p(\alpha)' ,1}(\Omega )$.
Moreover, since $(1-\alpha )p(\alpha)' =n$, it follows from Lemma \ref{properties of SL on domains} (iv)  that
\[
\|\nabla T(g)\|_{L^\infty  (\Omega; \R^n )} \lesssim  \|\nabla T(g)\|_{L_{1-\alpha}^{p(\alpha)'  ,1} (\Omega ; \R^n)}\lesssim  \| g\|_{L_{-\alpha}^{p(\alpha)'  ,1} (\Omega )}
\]
and
\[
\left| \left\langle v_D ,   \partial_\nu T(g)   \right\rangle \right| \le \|v_D \|_{\mathcal{B}_\alpha}\|\nabla T(g)\|_{L_{1-\alpha}^{p(\alpha)'  ,1} (\Omega ; \R^n)}\lesssim \|v_D \|_{\mathcal{B}_\alpha}\| g\|_{L_{-\alpha}^{p(\alpha)'  ,1} (\Omega )}
\]
for all $g \in L_{-\alpha}^{p(\alpha)',1}(\Omega )$. Hence the mapping
\[
g \ \mapsto \ \left\langle v_D ,   \partial_\nu T(g)   \right\rangle - \int_\Omega F \cdot   \nabla T(g) \, d x
\]
defines a bounded linear functional on $ L_{-\alpha}^{p(\alpha)',1}(\Omega )$ and its norm is bounded above by $C \left( \|F \|_{L^1  (\Omega ;\R^n )}+ \|v_D \|_{\mathcal B_\alpha } \right)$.
By Lemma \ref{real-interp-S on domains} (iv), 
there is a unique
$v_\alpha \in R_\Omega  (\widetilde{L}_\alpha^{p(\alpha),\infty} (\overline{\Omega} ) )  $,
with
$$
\|v_\alpha \|_{R_\Omega  (\widetilde{L}_\alpha^{p(\alpha),\infty} (\overline{\Omega} ) )} \lesssim \|F \|_{L^1  (\Omega ;\R^n )}+ \|v_D \|_{\mathcal B_\alpha},
$$
 such that
\begin{equation}\label{function u-alpha}
 \langle v_\alpha , g  \rangle = \left\langle v_D ,   \partial_\nu T(g)   \right\rangle - \int_\Omega F \cdot   \nabla T(g) \, d x \quad\mbox{for all} \,\, g \in L_{-\alpha}^{p(\alpha)',1}(\Omega ).
\end{equation}

Choose any $\beta$ with   $0 \le  \beta <  \alpha $.
Then since $\beta-0  = n/p(\beta)-n/p (0)$ and $\alpha  -\beta  = n/p(\alpha)  - n/p(\beta) >0$, it follows from Lemma \ref{properties of SL on Rn} (ii) and   Lemma \ref{embedding of L into B on domain} that
$$
\widetilde{v}_\alpha \in L_\alpha^{p(\alpha),\infty} (\R^n  ) \hookrightarrow
{L}^{n/(n-1),\infty}(\R^n )\cap {L}_{\beta}^{p(\beta),\infty}(\R^n )\cap {B}_{\beta}^{p(\beta),\infty}(\R^n )
$$
and
\begin{align*}
\|\widetilde{v}_\alpha \|_{{L}_{\beta}^{p(\beta),\infty}(\R^n )\cap {B}_{\beta}^{p(\beta),\infty}(\R^n )} & \lesssim
\|\widetilde{v}_\alpha \|_{L_\alpha^{p(\alpha),\infty} (\R^n  )}= \|v_\alpha \|_{R_\Omega  (\widetilde{L}_\alpha^{p(\alpha),\infty} (\overline{\Omega} ) )}\\
&\lesssim \|F \|_{L^1  (\Omega ;\R^n )}+ \|v_D \|_{\mathcal B_\alpha (\partial\Omega )}.
\end{align*}

Given $\psi \in C^{1,1} (\overline{\Omega} )$ with $\psi =0$ on $\partial\Omega$, we define  $g =-\Delta \psi$. Then since
$$
\psi , T(g) \in L_{1,0}^{p(\alpha)',1} (\Omega ) \cap  L_{2-\alpha, 0}^{p(\alpha)' ,1}(\Omega )\quad\mbox{and}\quad -\Delta \psi =g =-\Delta T(g)\,\,\mbox{in}\,\, \Omega,
$$
it follows from  Theorem \ref{Poisson-FSL spaces} (ii) that $\psi =T(g)$. Hence by  (\ref{function u-alpha}), we have
\[
 \int_\Omega v_\alpha (- \Delta
 \psi ) \, dx =\left\langle v_D ,   \partial_\nu  \psi   \right\rangle -  \int_\Omega F \cdot \nabla
 \psi\, d x ,
\]
which implies that $v_\alpha$ is a very weak solution  of  (\ref{Poisson with boundary data}). By the uniqueness of a very weak solution, we conclude that $v_\alpha =v$. This completes the proof of the theorem.
\qed

\section{Traces of      very weak solutions}

This final section is devoted to proving Theorem \ref{trace results-VWS}, which is a trace result  for     very weak solutions     of  (\ref{eq:D-2-introd}).
First we explore the trace of a  function   in $L^{p,q}_{2}(\Omega)$.

\subsection{Trace results for $L_2^{p,q} (\Omega )$}

Let $\Omega$ be a bounded $C^{1}$-domain  in $\R^n$.
Then for $1<p<\infty$ and $1 \le q \le \infty$, the operator $\widetilde{\rm Tr}$ defined by
\[
\widetilde{\rm Tr}\,u := \left( {\rm Tr }\, u ,\,  {\rm Tr} \,(\nabla u) \cdot \nu \right)
\]
is bounded from $L^{p,q}_{2}(\Omega)$ into
$\mathcal{B}^{p,q}_{2-1/p}(\partial \Omega)\times \mathcal{B}^{p,q}_{1-1/p}(\partial \Omega)$. Suppose in addition that $\Omega$ is  a   $C^{1,1}$-domain.
It is classical (see, e.g., \cite[Subsection 2.5.7]{Ne}) that for $k=1,2$, the range of the trace operator ${\rm Tr} : L_k^p (\Omega ) \to L^p (\partial\Omega )$ is the boundary Besov space $B_{k-1/p}^p (\partial \Omega )$; therefore,
\[
\mathcal{B}^{p,p}_{k-1/p}(\partial \Omega) = B_{k-1/p}^p (\partial\Omega ) \quad\mbox{for}\quad k=1,2 .
\]
It has been known (see, e.g.,  \cite[Subsection 2.5.7]{Ne}) that $\widetilde{\rm Tr} $   has a right inverse $\widetilde{\rm  Ex}$ that is bounded from  $B^{r}_{2-1/r}(\partial \Omega)\times B^{r}_{1-1/r}(\partial \Omega)$  into $L^{r}_{2}(\Omega)$ for all $1<r<\infty$.

\begin{lem}\label{trace results-2} Let  $\Omega$ be a  bounded $C^{1,1}$-domain in $\R^n$. Suppose  that   $1<p<\infty$ and    $1 \le q \le \infty$.
\begin{enumerate}[{\upshape(i)}]

\item  There is a bounded linear operator $\widetilde{\rm Ex}_1  : \mathcal B^{p,q}_{2-1/p}(\partial \Omega ) \rightarrow L_2^{p,q} (\Omega )$ such that ${\rm Tr} \circ \widetilde{\rm Ex}_1  $ is  the identity on $\mathcal B^{p,q}_{2-1/p}(\partial \Omega )$.
\item If  $1< p_0 \neq p_1 <\infty$,  $0< \theta <1$, and $1/p=(1-\theta)/p_0+\theta/p_1$,
then
\[
\left(  B^{p_0}_{2-1/p_0}(\partial \Omega ),  B^{p_1}_{2-1/p_1}(\partial \Omega ) \right)_{\theta,q}= \mathcal B^{p,q}_{2-1/p}(\partial \Omega ).
\]

\item 
    $\widetilde{\rm  Ex}$ is  bounded from $\mathcal B^{p,q}_{2-1/p}(\partial \Omega ) \times \mathcal  B^{p,q}_{1-1/p}(\partial \Omega )$ into $L_2^{p,q} (\Omega ) $
    and  
   $\widetilde{\rm Tr} \circ \widetilde{\rm  Ex} $ is  the identity on  $\mathcal  B^{p,q}_{2-1/p}(\partial \Omega ) \times \mathcal  B^{p,q}_{1-1/p}(\partial \Omega )$.

\end{enumerate}
\end{lem}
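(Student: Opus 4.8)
The plan is to obtain the lemma as a real-interpolation lift, in the integrability exponent, of the classical ($L^p$, second-order) trace theory for $C^{1,1}$-domains, in direct parallel with Lemma~\ref{trace results-1}. Fix $1<p_0<p<p_1<\infty$ and $0<\theta<1$ with $1/p=(1-\theta)/p_0+\theta/p_1$; the argument interpolates between the endpoint exponents $p_0,p_1$, for which the classical facts recalled just before the statement are available: $\widetilde{\rm Tr}$ is bounded from $L_2^{p_i}(\Omega)$ into $B^{p_i}_{2-1/p_i}(\partial\Omega)\times B^{p_i}_{1-1/p_i}(\partial\Omega)$, and the single operator $\widetilde{\rm Ex}$ is bounded from $B^{p_i}_{2-1/p_i}(\partial\Omega)\times B^{p_i}_{1-1/p_i}(\partial\Omega)$ into $L_2^{p_i}(\Omega)$ with $\widetilde{\rm Tr}\circ\widetilde{\rm Ex}={\rm id}$, for $i=0,1$ (here we also use $\mathcal B^{r,r}_{k-1/r}(\partial\Omega)=B^r_{k-1/r}(\partial\Omega)$ for $k=1,2$).

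First I would prove (ii), which simultaneously yields (i). For ``$\hookrightarrow$'': since ${\rm Tr}$, the first component of $\widetilde{\rm Tr}$, is bounded from $L_2^{p_i}(\Omega)$ into $B^{p_i}_{2-1/p_i}(\partial\Omega)$, it follows from Lemma~\ref{interp for SL on domains}~(ii) (with $\alpha=2$) that ${\rm Tr}$ is bounded from $L_2^{p,q}(\Omega)$ into $\big(B^{p_0}_{2-1/p_0}(\partial\Omega),B^{p_1}_{2-1/p_1}(\partial\Omega)\big)_{\theta,q}$; since $\mathcal B^{p,q}_{2-1/p}(\partial\Omega)$ carries the quotient norm of ${\rm Tr}:L_2^{p,q}(\Omega)\to L^{p,q}(\partial\Omega)$, this is exactly $\mathcal B^{p,q}_{2-1/p}(\partial\Omega)\hookrightarrow\big(B^{p_0}_{2-1/p_0}(\partial\Omega),B^{p_1}_{2-1/p_1}(\partial\Omega)\big)_{\theta,q}$. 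For the reverse embedding, put ${\rm Ex}_1^{(r)}g:=\widetilde{\rm Ex}(g,0)$; then ${\rm Ex}_1^{(r)}$ is bounded from $B^{r}_{2-1/r}(\partial\Omega)$ into $L_2^{r}(\Omega)$ with ${\rm Tr}\circ{\rm Ex}_1^{(r)}={\rm id}$. Interpolating between $r=p_0$ and $r=p_1$ via Lemma~\ref{interp for SL on domains}~(ii) again produces a bounded operator ${\rm Ex}_1:\big(B^{p_0}_{2-1/p_0}(\partial\Omega),B^{p_1}_{2-1/p_1}(\partial\Omega)\big)_{\theta,q}\to L_2^{p,q}(\Omega)$ with ${\rm Tr}\circ{\rm Ex}_1={\rm id}$; hence ${\rm Tr}$ maps $L_2^{p,q}(\Omega)$ onto $\big(B^{p_0}_{2-1/p_0}(\partial\Omega),B^{p_1}_{2-1/p_1}(\partial\Omega)\big)_{\theta,q}$ with norm control, giving the reverse embedding. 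Taking $\widetilde{\rm Ex}_1:={\rm Ex}_1$ then proves (i).

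Finally I would prove (iii). Real interpolation commutes with finite products, so
\[
\big(B^{p_0}_{2-1/p_0}(\partial\Omega)\times B^{p_0}_{1-1/p_0}(\partial\Omega),\; B^{p_1}_{2-1/p_1}(\partial\Omega)\times B^{p_1}_{1-1/p_1}(\partial\Omega)\big)_{\theta,q}=\mathcal B^{p,q}_{2-1/p}(\partial\Omega)\times\mathcal B^{p,q}_{1-1/p}(\partial\Omega),
\]
where the first factor is identified by part (ii) and the second by Lemma~\ref{trace results-1}~(ii) applied with $\alpha=1$ (admissible since $1/p_0<1<1+1/p_1$). Interpolating the two endpoint bounds for $\widetilde{\rm Ex}$ by Lemma~\ref{interp for SL on domains}~(ii) then shows $\widetilde{\rm Ex}$ is bounded from $\mathcal B^{p,q}_{2-1/p}(\partial\Omega)\times\mathcal B^{p,q}_{1-1/p}(\partial\Omega)$ into $\big(L_2^{p_0}(\Omega),L_2^{p_1}(\Omega)\big)_{\theta,q}=L_2^{p,q}(\Omega)$. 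The relation $\widetilde{\rm Tr}\circ\widetilde{\rm Ex}={\rm id}$ then persists on this space for free: every element of the interpolation space is a sum of elements of the two endpoint product spaces, on each of which the identity already holds, and $\widetilde{\rm Tr}$, $\widetilde{\rm Ex}$ are restrictions of fixed operators on the respective sum spaces — so no density argument (which would fail when $q=\infty$) is needed.

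The point to watch is that, unlike the first-order situation, Lemma~\ref{trace results-1} cannot be invoked for the second-order smoothness $2-1/p$, since its admissibility range $1/p_0<\alpha<1+1/p_1$ excludes $\alpha=2$; this is exactly where the $C^{1,1}$-regularity of $\Omega$ is used, via the classical existence of $\widetilde{\rm Ex}$ on $L_2^{r}(\Omega)$ and the Besov identification of the integer-order boundary trace spaces. Beyond that, the only mild technical points are the commutation of real interpolation with finite products and the consistency of the trace on the sum $L_2^{p_0}(\Omega)+L_2^{p_1}(\Omega)$, both routine.
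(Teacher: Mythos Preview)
Your proposal is correct and follows essentially the same route as the paper: you first prove (ii) by interpolating ${\rm Tr}$ and the right inverse $\widetilde{\rm Ex}_1:=\widetilde{\rm Ex}(\cdot,0)$ between the endpoints $p_0,p_1$ via Lemma~\ref{interp for SL on domains}~(ii), derive (i) from this, and then obtain (iii) by combining (ii) with Lemma~\ref{trace results-1}~(ii) through the commutation of real interpolation with finite products. Your additional remarks on why the identity $\widetilde{\rm Tr}\circ\widetilde{\rm Ex}={\rm id}$ persists without a density argument and on where the $C^{1,1}$ hypothesis enters are helpful clarifications, but the core argument matches the paper's.
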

\begin{proof}
Suppose   that $1< p_0 \neq  p_1 <\infty$,  $0< \theta <1$, and $1/p=(1-\theta)/p_0+\theta/p_1$.  Then as in the proof of Lemma \ref{trace results-1} (ii), the trace operator        ${\rm Tr}$ is bounded from $L^{p,q}_{2}(\Omega)$ into $\left(  B^{p_0}_{2-1/p_0}(\partial \Omega ),  B^{p_1}_{2-1/p_1}(\partial \Omega ) \right)_{\theta,q}$ and
\[
 \mathcal B^{p,q}_{2-1/p}(\partial \Omega ) \hookrightarrow\left(  B^{p_0}_{2-1/p_0}(\partial \Omega ),  B^{p_1}_{2-1/p_1}(\partial \Omega ) \right)_{\theta,q}.
\]
On the other hand, the operator
\[
 f \mapsto \widetilde{\rm Ex}_1 \,( f) : = \widetilde{\rm  Ex} \, (f,0)
\]
is a  right inverse of  ${\rm  Tr} $ that is bounded from $B^{p_i}_{2-1/p_i}(\partial \Omega)$ into $L^{p_i}_{2}(\Omega)$  for $i=0,1$. By  real interpolation, $\widetilde{\rm Ex}_1$ is bounded from $\left(  B^{p_0}_{2-1/p_0}(\partial \Omega ),  B^{p_1}_{2-1/p_1}(\partial \Omega ) \right)_{\theta,q}$ into $L^{p,q}_{2}(\Omega)$ and
\[
\left(  B^{p_0}_{2-1/p_0}(\partial \Omega ),  B^{p_1}_{2-1/p_1}(\partial \Omega ) \right)_{\theta,q} \hookrightarrow \mathcal B^{p,q}_{2-1/p}(\partial \Omega ) ,
\]
which proves Parts (ii) and (i).  To prove Part (iii), we   observe from Part (ii), Lemma \ref{trace results-1}, and \cite[Chapter 3.13. Exercises 4]{bergh} that
\[\begin{split}
&\mathcal B^{p,q}_{2-1/p}(\partial \Omega ) \times \mathcal B^{p,q}_{1-1/p}(\partial \Omega )\\
&=\big(\, {B}^{p_0}_{2-1/p_0}(\partial\Omega),\, {B}^{p_1}_{2-1/p_1}(\partial\Omega)\,\big)_{\theta,q} \times \big(\, {B}^{p_0}_{1-1/p_0}(\partial\Omega),\, {B}^{p_1}_{1-1/p_1}(\partial\Omega)\,\big)_{\theta,q}\\
&=\big(\, {B}^{p_0}_{2-1/p_0}(\partial\Omega) \times {B}^{p_0}_{1-1/p_0}(\partial\Omega)\,,\,  {B}^{p_1}_{2-1/p_1}(\partial\Omega)\times {B}^{p_1}_{1-1/p_1}(\partial\Omega) \,\big)_{\theta,q}.
\end{split}\]
Hence   $\widetilde{\rm  Ex}$ is bounded from $\mathcal B^{p,q}_{2-1/p}(\partial \Omega ) \times \mathcal B^{p,q}_{1-1/p}(\partial \Omega )$ into $  L_2^{p,q}(\Omega )$.
This completes  the proof of the lemma.
\end{proof}

Consider the following Dirichlet problem for the bi-Laplace operator $\Delta^2$:
\begin{equation}\label{eq:D-2}
\left\{\begin{alignedat}{2}
 \Delta^2 u =  & \Delta  f & \quad & \text{in }\Omega,\\
  u   = \partial_\nu u & = 0 & \quad & \text{on }\partial\Omega  ,
\end{alignedat}\right.
\end{equation}
where $\Omega$ is a bounded $C^{1}$-domain  in $\R^n$.
Denote by $L^{p,q}_{2,(0,0)}(\Omega) $ or  $L^{p,q}_{2,{\bf 0}}(\Omega)$ the space of all $u\in L^{p,q}_{2}(\Omega)$ such that $ \widetilde{\rm Tr}\, u= (0,0)$ on $\partial\Omega$:
$$
L^{p,q}_{2,{\bf 0}}(\Omega) = \left\{u\in L^{p,q}_{2}(\Omega)\, :\, \widetilde{\rm Tr}\, u= (0,0) \text{ on }\partial\Omega \right\}.
$$
For $1<p=q<\infty$, we write $L^{p}_{2,{\bf 0}}(\Omega) = L^{p, p}_{2,{\bf 0}}(\Omega)$. Then it is known  (see, e.g., \cite[Subsection 2.5.7]{Ne} and  \cite[Proposition 1.14]{AP98}) that
\[
\widetilde{L}^{p}_{2}(\Omega) = L^{p}_{2 ,
{\bf 0}}(\Omega)  \quad \mbox{for}\,\, 1< p< \infty .
\]

\begin{lem}\label{biharmonic-S spaces}
Let $\Omega$ be a bounded $C^{1}$-domain in $\R^n , n \ge 2$. Suppose  that $1<p<\infty$ and $1 \le q \le \infty$.
Then for every $f\in L^{p,q}(\Omega )$, there exists a unique solution $u\in L_{2,{\bf 0}}^{p,q}(\Omega )$ of (\ref{eq:D-2}). Moreover,
\[
\|u\|_{L_{2}^{p,q}(\Omega )} \le C  \|f\|_{L^{p,q}(\Omega  )}
\]
for some $C=C(n,p,q,  \Omega )$.
\end{lem}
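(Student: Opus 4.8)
The plan is to settle the diagonal case $p=q$ first and then obtain the general Lorentz exponent $q$ by real interpolation, using the interpolation identities for the spaces $\widetilde{L}^{p,q}_2(\Omega)$ from Section~3. For the diagonal case, note first that $\Delta$ maps $L^p(\Omega)$ boundedly into $L^p_{-2}(\Omega)$ (by H\"older's inequality, exactly as in the extension of $-\Delta$ to $L^p(\Omega)\to L^p_{-2}(\Omega)$ in Section~4), so $\|\Delta f\|_{L^p_{-2}(\Omega)}\lesssim\|f\|_{L^p(\Omega)}$. Hence \eqref{eq:D-2} is the biharmonic Dirichlet problem with right-hand side $\Delta f\in L^p_{-2}(\Omega)$ and zero Cauchy data on $\partial\Omega$, and I would invoke the classical $L^p$-solvability of this problem on bounded $C^1$-domains for the full range $1<p<\infty$: there is a unique $u\in L^p_{2,\mathbf 0}(\Omega)=\widetilde{L}^p_2(\Omega)$ with $\|u\|_{L^p_2(\Omega)}\lesssim\|\Delta f\|_{L^p_{-2}(\Omega)}\lesssim\|f\|_{L^p(\Omega)}$. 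One may also build it directly: first subtract an $\R^n$-solution $U$ of $\Delta^2 U=\Delta\widetilde f$ of Calder\'on--Zygmund type to reduce to the homogeneous equation with boundary data in $\mathcal B^p_{2-1/p}(\partial\Omega)\times\mathcal B^p_{1-1/p}(\partial\Omega)$, then apply the extension operator $\widetilde{\rm Ex}$ of Lemma~\ref{trace results-2}(iii) to reduce to zero boundary data, the $L^2$-case being the coercivity identity $\|\Delta u\|_{L^2(\Omega)}=\|D^2 u\|_{L^2(\Omega)}$ on $\widetilde{L}^2_2(\Omega)$ together with a self-duality argument for $1<p<2$. Write $S_p:L^p(\Omega)\to\widetilde{L}^p_2(\Omega)$ for the bounded solution operator.

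Next, fix $1<p<\infty$ and $1\le q\le\infty$, choose $1<p_0\neq p_1<\infty$ and $0<\theta<1$ with $1/p=(1-\theta)/p_0+\theta/p_1$, and put $p_*=\min\{p_0,p_1\}$. Since $\widetilde{L}^{p_1}_2(\Omega)\hookrightarrow\widetilde{L}^{p_0}_2(\Omega)$ when $p_0<p_1$ (as in the proof of Lemma~\ref{properties of SL on domains}(ii)), the operators $S_{p_0}$ and $S_{p_1}$ are consistent, so by \eqref{interpolation for L} and Lemma~\ref{real-interp-S on domains}(i) their common extension $S$ is bounded from $L^{p,q}(\Omega)=(L^{p_0}(\Omega),L^{p_1}(\Omega))_{\theta,q}$ into $(\widetilde{L}^{p_0}_2(\Omega),\widetilde{L}^{p_1}_2(\Omega))_{\theta,q}=R_\Omega(\widetilde{L}^{p,q}_2(\overline\Omega))\hookrightarrow L^{p,q}_2(\Omega)$. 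Interpolating the identities $\widetilde{\rm Tr}\circ S_{p_i}=(0,0)$ for $i=0,1$, and using that $\widetilde{\rm Tr}$ is bounded on $L^{p,q}_2(\Omega)$ and consistent across exponents, gives $\widetilde{\rm Tr}(Sf)=(0,0)$; hence $u:=Sf\in L^{p,q}_{2,\mathbf 0}(\Omega)$ with the claimed estimate. That $u$ solves $\Delta^2 u=\Delta f$ in $\Omega$ follows by testing against $\phi\in C_c^\infty(\Omega)$, since $f\in L^{p,q}(\Omega)\hookrightarrow L^{p_*}(\Omega)$ (Lemma~\ref{properties of SL on domains}(ii)) and $\Delta^2 S_{p_*}f=\Delta f$. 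For uniqueness, any $w\in L^{p,q}_{2,\mathbf 0}(\Omega)$ solving the homogeneous problem lies in $L^{p_*}_{2,\mathbf 0}(\Omega)$ by the same embedding, hence vanishes by the diagonal uniqueness.

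The genuinely substantive ingredient is the $L^p$-theory for the biharmonic operator on $C^1$-domains over the full range $1<p<\infty$; within the machinery developed here the delicate point is the identification $(\widetilde{L}^{p_0}_2(\Omega),\widetilde{L}^{p_1}_2(\Omega))_{\theta,q}=R_\Omega(\widetilde{L}^{p,q}_2(\overline\Omega))$ from Lemma~\ref{real-interp-S on domains}(i) and the verification that $\widetilde{\rm Tr}$ still annihilates this space, the case $q=\infty$ being handled through the embedding $L^{p,\infty}_2(\Omega)\hookrightarrow L^{p_*}_2(\Omega)$ and the consistency of the trace operators.
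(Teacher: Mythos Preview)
Your proposal is correct and follows essentially the same approach as the paper: cite the $L^p$-theory for the biharmonic Dirichlet problem on bounded $C^1$-domains (the paper invokes \cite[Theorem 8.6]{DK2}) for the diagonal case, and then real-interpolate the solution operator to reach general Lorentz exponents $q$, landing in $\bigl(L^{p_0}_{2,\mathbf 0}(\Omega),L^{p_1}_{2,\mathbf 0}(\Omega)\bigr)_{\theta,q}\hookrightarrow L^{p,q}_{2,\mathbf 0}(\Omega)$ (the paper obtains this embedding directly from Lemma~\ref{interp for SL on domains}(ii), while you route it through $\widetilde{L}^{p_i}_2(\Omega)=L^{p_i}_{2,\mathbf 0}(\Omega)$ and Lemma~\ref{real-interp-S on domains}(i), which is equivalent). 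One caveat: your alternative direct construction of the diagonal case invokes the extension operator $\widetilde{\rm Ex}$ of Lemma~\ref{trace results-2}(iii), but that lemma is stated only for $C^{1,1}$-domains, so that particular sketch does not work at the $C^1$ level of generality claimed here; the citation route is the one that actually carries the proof.
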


\begin{proof}  For the case when  $1<p=q<\infty$, this result was   shown  in \cite[Theorem 8.6]{DK2} for more general elliptic systems (see also \cite[Theorem 2.1]{AP98} and \cite[Theorem 1.6]{MM_book}).

For the general case, suppose that   $1<p_0 <  p_1 <\infty$, $0< \theta < 1$, and $1/p=(1-\theta)/p_0 +\theta/p_1$,
and $1 \le q \le \infty$.
Then it follows from  \cite[Theorem 8.6]{DK2}  that for every $f \in L^{p_0} (\Omega )$, there exists a unique solution $u \in L_{2,{\bf 0}}^{p_0} (\Omega )$ of  (\ref{eq:D-2}). Let   $\widetilde{T} : L^{p_0} (\Omega ) \to L_{2,{\bf 0}}^{p_0} (\Omega )$ be the associated  solution operator for (\ref{eq:D-2}).  Then  $\widetilde{T}$ is  also bounded from  $L^{p_1} (\Omega )$ into $ L_{2,{\bf 0}}^{p_1} (\Omega )$.
Therefore,   $T$ is bounded from $   L^{p,q}(\Omega ) $ into $\left(\,  L_{2,{\bf 0}}^{p_0}(\Omega ) , L_{2,{\bf 0}}^{p_1}(\Omega ) \, \right)_{\theta,q}$.
Moreover, it easily follows from Lemma \ref{interp for SL on domains} (ii) that $\left(\,  L_{2,{\bf 0}}^{p_0}(\Omega ) , L_{2,{\bf 0}}^{p_1}(\Omega ) \, \right)_{\theta,q} \hookrightarrow L_{2,{\bf 0}}^{p,q}(\Omega )$.
This completes the proof of the lemma.
\end{proof}

\begin{lem}\label{lem:interpolation_L00}
 Let $\Omega$ be a bounded $C^{1}$-domain in $\R^n , n \ge 2 $. Suppose  that $1<p_0 \neq  p_1 <\infty$, $0< \theta < 1$, $1/p=(1-\theta)/p_0 +\theta/p_1$,  and $1 \le q \le \infty$.
Then
\[
\left(\,  L_{2,{\bf 0}}^{p_0}(\Omega ) , L_{2,{\bf 0}}^{p_1}(\Omega ) \, \right)_{\theta,q} = L_{2,{\bf 0} }^{p,q}(\Omega ) .
\]
\end{lem}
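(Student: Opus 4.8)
The plan is to mimic the proof of Lemma~\ref{Interpolatoin of data-solution}, with the biharmonic solvability result Lemma~\ref{biharmonic-S spaces} playing the role that Lemma~\ref{Poisson-S spaces} plays there. Since real interpolation is symmetric we may assume $p_0<p_1$, so that $p_0<p<p_1$. For $i=0,1$ let $\widetilde T\colon L^{p_i}(\Omega)\to L_{2,{\bf 0}}^{p_i}(\Omega)$ be the solution operator of \eqref{eq:D-2} furnished by Lemma~\ref{biharmonic-S spaces}; by the uniqueness in that lemma the two operators agree on $L^{p_1}(\Omega)=L^{p_0}(\Omega)\cap L^{p_1}(\Omega)$, so by real interpolation together with \eqref{interpolation for L}, $\widetilde T$ extends to a bounded map from $L^{p,q}(\Omega)=\left(L^{p_0}(\Omega),L^{p_1}(\Omega)\right)_{\theta,q}$ into $Y:=\left(L_{2,{\bf 0}}^{p_0}(\Omega),L_{2,{\bf 0}}^{p_1}(\Omega)\right)_{\theta,q}$. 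We record two elementary facts about $Y$: first, Lemma~\ref{interp for SL on domains}(ii) gives $Y\hookrightarrow\left(L_2^{p_0}(\Omega),L_2^{p_1}(\Omega)\right)_{\theta,q}=L_2^{p,q}(\Omega)$; second, since $p_1>p_0$ we have $L_2^{p_1}(\Omega)\hookrightarrow L_2^{p_0}(\Omega)$ by Lemma~\ref{properties of SL on domains}(ii), and consistency of the trace operator $\widetilde{\rm Tr}$ then gives $L_{2,{\bf 0}}^{p_1}(\Omega)\hookrightarrow L_{2,{\bf 0}}^{p_0}(\Omega)$, whence $Y\hookrightarrow\left(L_{2,{\bf 0}}^{p_0}(\Omega),L_{2,{\bf 0}}^{p_0}(\Omega)\right)_{\theta,q}=L_{2,{\bf 0}}^{p_0}(\Omega)$.

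The inclusion $Y\hookrightarrow L_{2,{\bf 0}}^{p,q}(\Omega)$ is then obtained exactly as in the reverse-embedding step of Lemma~\ref{Interpolatoin of data-solution}(i): combining the two facts above, $Y\hookrightarrow L_2^{p,q}(\Omega)\cap L_{2,{\bf 0}}^{p_0}(\Omega)$, and since the trace operators on $L_2^{p,q}(\Omega)$ and on $L_2^{p_0}(\Omega)$ are consistent, $L_2^{p,q}(\Omega)\cap L_{2,{\bf 0}}^{p_0}(\Omega)=L_{2,{\bf 0}}^{p,q}(\Omega)$ with equivalent norms. For the reverse inclusion $L_{2,{\bf 0}}^{p,q}(\Omega)\hookrightarrow Y$, let $u\in L_{2,{\bf 0}}^{p,q}(\Omega)$ and put $f:=\Delta u$. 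Since $L_2^{p,q}(\Omega)=W^{2,p,q}(\Omega)$ by Lemma~\ref{interp for SL on domains}(iii), the operator $\Delta$ is bounded from $L_2^{p,q}(\Omega)$ into $L^{p,q}(\Omega)$, so $f\in L^{p,q}(\Omega)$ with $\|f\|_{L^{p,q}(\Omega)}\lesssim\|u\|_{L_2^{p,q}(\Omega)}$, and hence $\widetilde T f\in Y$ with $\|\widetilde T f\|_Y\lesssim\|u\|_{L_2^{p,q}(\Omega)}$. Both $u$ and $\widetilde T f$ belong to $L_{2,{\bf 0}}^{p_0}(\Omega)$ — for $u$ because $p>p_0$ (Lemma~\ref{properties of SL on domains}(ii) and trace consistency), for $\widetilde T f$ because $Y\hookrightarrow L_{2,{\bf 0}}^{p_0}(\Omega)$ — and $\Delta^2(u-\widetilde T f)=\Delta(\Delta u)-\Delta f=0$ in $\Omega$; thus $u-\widetilde T f$ is a solution of \eqref{eq:D-2} with right-hand datum $f\equiv 0$, which forces $u-\widetilde T f=0$ by the uniqueness in Lemma~\ref{biharmonic-S spaces} at the exponent $p_0$. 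Therefore $u=\widetilde T f\in Y$ and $\|u\|_Y\lesssim\|u\|_{L_2^{p,q}(\Omega)}=\|u\|_{L_{2,{\bf 0}}^{p,q}(\Omega)}$, which together with the first inclusion completes the proof.

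The argument is short and the computations are routine; the only delicate point, as in Lemma~\ref{Interpolatoin of data-solution}, is the bookkeeping with the trace operator $\widetilde{\rm Tr}$. One must verify that the interpolation space $Y$, which a priori is only contained in $L_2^{p_0}(\Omega)+L_2^{p_1}(\Omega)$, actually lies inside $L_2^{p,q}(\Omega)$ and consists of functions whose $\widetilde{\rm Tr}$ vanishes; this relies on the consistency of the trace operators along the scale $\{L_2^{r}(\Omega)\}$, which is precisely why the ordering $p_0<p_1$ is imposed above.
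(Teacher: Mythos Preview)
Your proof is correct and follows essentially the same route as the paper: both use the biharmonic solution operator $\widetilde T$ from Lemma~\ref{biharmonic-S spaces}, interpolate it to obtain boundedness into $Y=(L_{2,{\bf 0}}^{p_0}(\Omega),L_{2,{\bf 0}}^{p_1}(\Omega))_{\theta,q}$, and then invoke uniqueness of solutions to \eqref{eq:D-2} to identify $u$ with $\widetilde T(\Delta u)$. The only cosmetic difference is that the paper appeals to the uniqueness in Lemma~\ref{biharmonic-S spaces} directly at the $(p,q)$ level (that lemma already covers general $1\le q\le\infty$), whereas you descend to $L_{2,{\bf 0}}^{p_0}(\Omega)$ and use uniqueness there; your more explicit handling of trace consistency is a harmless elaboration.
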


\begin{proof}
By the proof of Lemma \ref{biharmonic-S spaces}, it remains to prove that
$$
L_{2,{\bf 0}}^{p,q}(\Omega ) \hookrightarrow \left(\,  L_{2,{\bf 0}}^{p_0}(\Omega ) , L_{2,{\bf 0}}^{p_1}(\Omega ) \, \right)_{\theta,q}.
$$
Suppose that  $u \in L_{2,{\bf 0}}^{p,q}(\Omega ) $ and $f= \Delta u$. Then $$
f \in  L^{p,q}(\Omega ) \quad\mbox{and}\quad \widetilde{T} f \in \left(\,  L_{2,{\bf 0}}^{p_0}(\Omega ) , L_{2,{\bf 0}}^{p_1}(\Omega ) \, \right)_{\theta,q} \hookrightarrow L_{2,{\bf 0}}^{p,q}(\Omega ) ,
$$
where  $\widetilde{T} $ is  the  solution operator for (\ref{eq:D-2}).
 Since $u, \widetilde{T} f \in L_{2,{\bf 0}}^{p,q}(\Omega )$ and
$\Delta^2 u =\Delta f  =\Delta^2  \widetilde{T} f$ in $\Omega$,
 it follows from Lemma \ref{biharmonic-S spaces} that
$u =\widetilde{T} f$. This completes the proof of the lemma.
\end{proof}

From Lemmas~\ref{real-interp-S on domains}  and \ref{lem:interpolation_L00}, we immediately obtain

\begin{lem}\label{R-tilde equal zero vector trac}  Let $\Omega$ be  a bounded $C^{1}$-domain in $\R^n , n \ge 2  $. If   $1<p<\infty$ and $1 \le q \le \infty$,  then
$R_\Omega  ( \widetilde{L}_2^{p,q}(\overline{\Omega} )) = L_{2, {\bf 0}}^{p,q}(\Omega )$. In addition, if $  q<\infty$, then
$\widetilde{L}_2^{p,q}(\Omega ) = L_{2, {\bf 0}}^{p,q}(\Omega ).$
\end{lem}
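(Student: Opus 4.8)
The plan is to obtain both assertions by real interpolation from the corresponding identity for the integer‑order spaces, following verbatim the pattern of the proof of Lemma~\ref{R-tilde equal zero trac}. The starting point is the observation that for $1<p<\infty$ one has
\[
\widetilde{L}_2^{p}(\Omega) = R_\Omega\big(\widetilde{L}_2^{p}(\overline{\Omega})\big) = L_{2,{\bf 0}}^{p}(\Omega),
\]
where the first equality records the density of $C_c^\infty(\Omega)$ in $R_\Omega(\widetilde{L}_2^{p}(\overline{\Omega}))$ noted in Section~3, and the second is the classical description of $\widetilde{L}_2^p(\Omega)$ recalled just before Lemma~\ref{biharmonic-S spaces}.

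First I would fix $1<p<\infty$ and $1\le q\le\infty$ and pick $1<p_0\neq p_1<\infty$ and $0<\theta<1$ with $1/p=(1-\theta)/p_0+\theta/p_1$. Applying Lemma~\ref{real-interp-S on domains}(i) with $\alpha=2$, and then inserting the identity above for $p_0$ and $p_1$, yields
\[
R_\Omega\big(\widetilde{L}_2^{p,q}(\overline{\Omega})\big) = \big(\widetilde{L}_2^{p_0}(\Omega),\,\widetilde{L}_2^{p_1}(\Omega)\big)_{\theta,q} = \big(L_{2,{\bf 0}}^{p_0}(\Omega),\,L_{2,{\bf 0}}^{p_1}(\Omega)\big)_{\theta,q}.
\]
By Lemma~\ref{lem:interpolation_L00} the right‑hand side equals $L_{2,{\bf 0}}^{p,q}(\Omega)$, which proves $R_\Omega(\widetilde{L}_2^{p,q}(\overline{\Omega})) = L_{2,{\bf 0}}^{p,q}(\Omega)$ as Banach spaces with equivalent norms. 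This settles the first assertion.

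For the second assertion, assume in addition that $q<\infty$. By definition $\widetilde{L}_2^{p,q}(\Omega)$ is the closure of $C_c^\infty(\Omega)$ in $R_\Omega(\widetilde{L}_2^{p,q}(\overline{\Omega}))$; but Lemma~\ref{real-interp-S on domains}(ii) tells us that $C_c^\infty(\Omega)$ is already dense in $R_\Omega(\widetilde{L}_2^{p,q}(\overline{\Omega}))$ when $q<\infty$. Hence $\widetilde{L}_2^{p,q}(\Omega) = R_\Omega(\widetilde{L}_2^{p,q}(\overline{\Omega})) = L_{2,{\bf 0}}^{p,q}(\Omega)$, which completes the proof.

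I do not anticipate any real obstacle: the argument is a direct transcription of the proof of Lemma~\ref{R-tilde equal zero trac}, with Lemma~\ref{lem:interpolation_L00} (which ultimately rests on the biharmonic solvability of Lemma~\ref{biharmonic-S spaces}) playing the role that the Poisson‑based Lemma~\ref{Interpolatoin of data-solution} played there. The only point to be mindful of is that both $R_\Omega(\widetilde{L}_2^{p,q}(\overline{\Omega}))$ and $L_{2,{\bf 0}}^{p,q}(\Omega)$ are taken with their natural norms, so the equalities are understood as Banach‑space identities with equivalent norms; this is automatic from the two interpolation identities used.
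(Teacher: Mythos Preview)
Your proposal is correct and follows exactly the same approach as the paper, which simply states that the lemma follows immediately from Lemmas~\ref{real-interp-S on domains} and~\ref{lem:interpolation_L00}. You have merely spelled out the details of this deduction, including the role of the classical identity $\widetilde{L}_2^{p}(\Omega) = L_{2,{\bf 0}}^{p}(\Omega)$, which is the analogue here of~(\ref{hat-Lpalpha}) in the proof of Lemma~\ref{R-tilde equal zero trac}.
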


\subsection{Proof of Theorem  \ref{trace results-VWS}}

By a simple reduction as in the proof  of Theorem  \ref{Poisson-div-L1 data-introd}, we may assume  without loss of generality that $b =0$ and $c=0$, so that $v$ and $G$ satisfy
\begin{equation}\label{dist-formulation}
\int_\Omega v   \Delta \psi\,   dx =   \int_\Omega G \cdot \nabla \psi \, d x \quad\mbox{for all}\,\, \psi \in C_c^\infty (\Omega ).
\end{equation}

Note   that by Lemma \ref{trace results-2} (ii), the mapping $h\mapsto \widetilde{\rm  Ex}\, (0,h) $ defines a bounded linear operator $\widetilde{\rm  Ex}_2 : B_{1-1/n}^{n,1}(\partial\Omega )\rightarrow L_{1,0}^{n,1}(\Omega )\cap L_2^{n,1}(\Omega)$. Hence for  $h \in B_{1-1/n}^{n,1}(\partial\Omega ) $, we have
\begin{align*}
&\int_\Omega v \Delta (\widetilde{\rm  Ex}_2 h) \, dx - \int_\Omega G \cdot \nabla (\widetilde{\rm  Ex}_2 h) \, d x \\
&\quad \le \|v\|_{L^{n/(n-1),\infty}(\Omega )} \|\Delta (\widetilde{\rm  Ex}_2 h)\|_{L^{n,1}(\Omega )}+\|G\|_{ L^1 (\Omega ; \R^n)}\|\nabla (\widetilde{\rm  Ex}_2 h)\|_{L^\infty(\Omega )}\\
&\quad \lesssim  \left( \|v\|_{L^{n/(n-1),\infty}(\Omega )} +\|G\|_{ L^1 (\Omega; \R^n )} \right) \|\widetilde{\rm  Ex}_2 h\|_{L_2^{n,1}(\Omega )}\\
&\quad \lesssim  \left( \|v\|_{L^{n/(n-1),\infty}(\Omega )} +\|G\|_{ L^1 (\Omega; \R^n )} \right) \|  h\|_{B_{1-1/n}^{n,1}(\partial\Omega )}.
\end{align*}
Therefore, the  mapping
\[
h \mapsto \int_\Omega v \Delta (\widetilde{\rm  Ex}_2 h) \, dx - \int_\Omega G \cdot \nabla (\widetilde{\rm  Ex}_2 h) \, d x
\]
defines a bounded linear functional $\gamma_0 v$ on $B_{1-1/n}^{n,1}(\partial\Omega )$,  with its  norm being bounded above by $C \left( \|v\|_{L^{n/(n-1),\infty}(\Omega )}+ \|G\|_{ L^1 (\Omega ;\R^n )} \right)$. To prove (\ref{Green identity}), suppose that
$$
\psi \in L_{1,0}^{n,1}(\Omega )\cap L_2^{n,1}(\Omega) \quad\mbox{and}\quad h= {\rm Tr}\, (\nabla \psi) \cdot \nu.
$$
Then setting $\phi =\widetilde{\rm  Ex}_2 h$, we have
\[
\psi , \phi \in L_2^{n,1}(\Omega), \quad {\rm Tr} \, \psi ={\rm Tr} \, \phi =0, \quad  \mbox{and}\quad  {\rm Tr}\, (\nabla \psi) \cdot \nu = {\rm Tr}\, (\nabla \phi ) \cdot \nu =h ,
\]
which implies,  by Lemma \ref{R-tilde equal zero vector trac}, that
\[
\psi -\phi \in  \widetilde{L}_2^{n,1} (\Omega ).
\]
By a simple density argument, it follows from (\ref{dist-formulation}) that
\[
\int_\Omega v \Delta (\psi -\phi ) \, dx = \int_\Omega G \cdot \nabla (\psi-\phi) \, d x .
\]
Therefore,
\begin{align*}
\left\langle \gamma_0 v , {\rm Tr}\, (\nabla \psi) \cdot \nu \right\rangle   =
\left\langle \gamma_0 v , h \right\rangle
&=\int_\Omega v \Delta \phi \, dx - \int_\Omega G \cdot \nabla \phi \, d x\\
&=\int_\Omega v \Delta \psi \, dx - \int_\Omega G \cdot \nabla \psi \, d x
.
\end{align*}
This proves the existence of $\gamma_0 v$ satisfying the desired  estimate of the theorem.

To prove the uniqueness assertion, suppose that $f$ is any bounded linear functional   on $B_{1-1/n}^{n,1}(\partial\Omega )$ satisfying \begin{equation}\label{Green identity-2}
\left\langle f, {\rm Tr}\, (\nabla \psi) \cdot \nu \right\rangle =\int_\Omega v \Delta \psi \, dx - \int_\Omega G \cdot \nabla \psi \, d x
\end{equation}
for all $\psi \in C^{1,1}(\overline{\Omega} )$ with $\psi =0$ on $\partial\Omega$. Then it follows from a density result in \cite[Lemma 2.10 (iv)]{KO} that (\ref{Green identity-2}) also  holds for all $\psi \in L_{1,0}^{n,1}(\Omega )\cap L_2^{n,1}(\Omega)$.
Given  $h \in B_{1-1/n}^{n,1}(\partial\Omega )$, let $\psi :=\widetilde{\rm  Ex}_2 h\in L_{1,0}^{n,1}(\Omega )\cap L_2^{n,1}(\Omega)$. Then since ${\rm Tr}\, (\nabla \psi) \cdot \nu=h$, it follow from  (\ref{Green identity-2}) and the definition of $\gamma_0 v$ that
\[\begin{split}
\left\langle f , h \right\rangle
&=\int_\Omega v \Delta \psi  \, dx - \int_\Omega G \cdot \nabla \psi  \, d x\\
&= \int_\Omega v \Delta (\widetilde{\rm  Ex}_2 h) \, dx - \int_\Omega G \cdot \nabla (\widetilde{\rm  Ex}_2 h) \, d x
 =\left\langle \gamma_0 v ,  h \right\rangle ,
\end{split}\]
which implies that $f =\gamma_0 v$.

Suppose in addition that $v \in L_1^{n/(n-1),\infty}(\Omega )$. Then by (\ref{dist-formulation}), we easily obtain
\begin{equation}\label{weak so-VWS}
\int_\Omega \left( \nabla v+ G \right)\cdot \nabla \psi \, d x =0 \quad\mbox{for all}\,\,\psi \in L_{1,0}^{n,1}(\Omega ).
\end{equation}
Given  $h \in B_{1-1/n}^{n,1}(\partial\Omega )$, let $\psi :=\widetilde{\rm  Ex}_2 h\in L_{1,0}^{n,1}(\Omega )\cap L_2^{n,1}(\Omega)$. Then by the divergence theorem and (\ref{weak so-VWS}),
\begin{align*}
\left\langle \gamma_0 v, h   \right\rangle &=\int_\Omega v \Delta \psi \, dx - \int_\Omega G \cdot \nabla \psi \, d x \\
&= \int_{\partial \Omega} ({\rm Tr}\, v) h \, d \sigma   - \int_\Omega \left( \nabla v+ G \right)\cdot \nabla \psi \, d x = \int_{\partial \Omega} ({\rm Tr}\, v) h \, d \sigma.
\end{align*}
This completes the proof of the theorem.
\qed

\bigskip
\noindent

\textbf{Acknowledgements: }
This work was supported by the Sogang University Research Grant of 2023 (202311003.01).
The authors were partially supported by the National Research Foundation of Korea(NRF) grant funded by the Korea government: H. Kim  by the Ministry of Education (No. NRF-2016R1D1A1B02015245); Y.-R. Lee by   the Ministry of Science and ICT (No. RS-2020-NR048589); J. Ok by the Ministry of Science and ICT (No. NRF-2022R1C1C1004523).


\end{document}